\documentclass[11pt]{amsart}

\usepackage{amsmath,amssymb,amsthm,mathrsfs}
\usepackage{hyperref}
\usepackage{enumitem}
\usepackage{orcidlink}     

\newtheorem{theorem}{Theorem}[section]
\newtheorem{proposition}[theorem]{Proposition}
\newtheorem{lemma}[theorem]{Lemma}

\theoremstyle{definition}
\newtheorem{definition}[theorem]{Definition}
\theoremstyle{remark}
\newtheorem{remark}[theorem]{Remark}
\newtheorem{hypothesis}[theorem]{Hypothesis}

\newcommand{\Z}{\mathbb{Z}}
\newcommand{\R}{\mathbb{R}}
\newcommand{\C}{\mathbb{C}}

\title[The Riemann $\Xi$-function from primitive Markovian cycles II]{The Riemann $\Xi$-function from primitive Markovian cycles II: Strip rigidity and divisor identification}

\author{
Douglas F. Watson
}

\date{}

\subjclass[2020]{11M26, 60J27, 26D15, 30D20}
\keywords{Riemann $\Xi$-function, theta function, heat kernel, reversible Markov chains, spectral determinants, strip rigidity}

\begin{document}

\begin{abstract}
We compare the Riemann $\Xi$--function to a canonical real-entire reference family arising from the
cycle Laplacian developed in Paper I (\cite{PaperI}).  These spectral determinants have only real zeros by
self-adjointness.  Our main tool is a rigidity lemma for holomorphic functions on horizontal strips.
Applied to a normalized seam ratio linking $\Xi(2\cdot)$ to the reference family, this lemma shows
that, under explicit holomorphy and boundary nonvanishing hypotheses verified in the forthcoming Paper~III, the seam
ratio extends to a zero-free holomorphic function of bounded type on each overlap strip.  It follows
that, on every admissible overlap strip, $\Xi(2\cdot)$ and the reference family have the same zero
divisor.
\end{abstract}

\maketitle


\section{Introduction}
\label{sec:background}

A recurring theme in analytic number theory is that a single analytic object can admit
two very different canonical constructions, and that the hard step is to show that these
constructions in fact produce the same function (or the same divisor), up to an
explicit and rigid normalization. The present work addresses such a comparison problem
arising from the primitive dynamical framework developed in \cite{PaperI}.

In \cite{PaperI} we started from finite, local, reversible Markov dynamics on discrete cycles\footnote{We note that the present paper relies on revisions to the version of ~\cite{PaperI} from the arXiv.}.
By an exact scaling limit together with lift--periodize identities, one obtains a renormalized
trace kernel with an explicit theta-series representation. From this single probabilistic
origin, \cite{PaperI} produces two analytic outputs:
\begin{enumerate}
\item a canonical Schoenberg--Edrei--Karlin factorization of the cycle-spectral$_\infty$ Laplace transform
\(
B_\Phi(s)=E(s)/\Psi(s)
\)
coming from spectral approximation of a logarithmic kernel, where $E$ is zero-free entire and
$\Psi$ lies in the real-entire (self-adjoint spectral) class (via the Schoenberg--Edrei--Karlin theory of
spectral determinant approximants \cite{Karlin1968}); and
\item an Archimedean completion whose Mellin transform agrees, at the self-dual normalization,
with the classical Riemann $\Xi$-function \cite{Titchmarsh1986}.
\end{enumerate}
These two outputs arise independently from the same primitive input. The central task is to
compare them in a common analytic regime.

As such, the purpose of the present paper is to carry out this comparison by strip analysis.
We introduce the \emph{normalized seam ratio}
\[
R(w):=\frac{\Xi(2w)}{P_N(w)},
\qquad
\widetilde R(w):=N(w)R(w),
\]
where $N$ is an explicit unit built from the rational cycle-spectral anchor (defined in
Section~\ref{sec:completion-ledger}). Since $P_N$ is real-entire (self-adjoint spectral), its zeros are real,
so $P_N(w)$ has zeros on the real $w$-axis. Accordingly, $R$ is a priori meromorphic.
Our comparison theorem will therefore be stated conditionally on the following holomorphy/divisor-cancellation hypothesis, which is exactly the missing divisor-identification step from Paper~\cite{PaperI}.

\begin{hypothesis}[Spectral coupling on strip rectangles]\label{hyp:seam-holomorphy}
Fix $\eta>0$ and write $S_\eta:=\{w\in\C:|\Im w|<\eta\}$.  Let $P_N$ denote the size--$N$ cycle
\emph{spectral determinant} reference function (Paper~\cite{PaperI}), obtained from a self-adjoint
cycle operator (e.g.\ the Laplacian) through the rescaled spectral map $\widetilde q_N$ and
\[
P_N(w)=\det(\widetilde q_N(w)I-L_N).
\]
Let $X(w):=\Xi(2w)$.  On each rectangle $R_T:=\{w:|\Re w|\le T,\ |\Im w|\le \eta\}$ assume there exists
a zero-free holomorphic unit $U_{\eta,N}$ on $S_\eta$ such that the boundary separation inequality
\[
\sup_{w\in\partial R_T}\,|X(w)-U_{\eta,N}(w)P_N(w)|\ <\ \inf_{w\in\partial R_T}\,|U_{\eta,N}(w)P_N(w)|
\]
holds for all $T\ge T_0(\eta)$ (with $N$ chosen as a function of $T$ in an admissible regime).
\end{hypothesis}

\begin{hypothesis}[Quantitative sector control on expanding rectangles]\label{hyp:sector-control}
Fix $\eta\in(0,\tfrac12)$. For each $T\ge 1$, let
\[
R_T=\{w\in\C:\ |\Re w|\le T,\ |\Im w|\le \eta\}.
\]
Assume that the normalized seam ratio $\widetilde R(w)$ is holomorphic on a neighborhood of
$\overline{R_T}$ for all $T\ge T_0(\eta)$, and that there exist constants $\theta\in(0,\pi/2)$ and
$T_1(\eta)\ge T_0(\eta)$ such that for all $T\ge T_1(\eta)$,
\[
\widetilde R(\partial R_T)\subset \{re^{i\phi}:\ r>0,\ |\phi|\le \theta\}.
\]
Equivalently, along $\partial R_T$ the argument of $\widetilde R$ admits a continuous branch taking values
in $[-\theta,\theta]$.
\end{hypothesis}

Our main result is a strip-unit/rigidity statement forcing $\widetilde R$ to be holomorphic and zero-free on each admissible overlap strip, conditional on Hypotheses~\ref{hyp:seam-holomorphy} and~\ref{hyp:sector-control}. Equivalently, on such a strip the entire function $w\mapsto \Xi(2w)$ differs from the real-entire (self-adjoint spectral) divisor factor $w\mapsto P_N(w)$ by a strip-unit, giving divisor identification on that region.
There are three main ingredients. The first is a \emph{left-strip continuation} for the cycle-spectral
object, obtained by modular splitting using the exact centering and twisted symmetry built into
the primitive kernel. The second is a collection of \emph{strip-uniform integration-by-parts and
Riemann--Lebesgue estimates} for the explicit kernels, giving quantitative control of boundary
terms on rectangles in the strip. The third is a \emph{rectangle argument-principle method} in a
Nevanlinna/Hardy setting, combined with two-ended anchoring, which forces the normalized seam ratio
to be a strip-unit (cf.\ standard bounded-type factorization tools \cite{Koosis1998}).

\textbf{Acknowledgements}
The author would like to thank Krishnaswami Alladi, Tiziano Valentinuzzi, Kenneth Valpey and Abhay Charan De for helpful discussions.

\subsection{Review of the primitive dynamical model}
\label{sec:model}

For each integer $N\ge 1$ we consider a continuous-time, reversible, nearest-neighbor Markov
process on the discrete cycle $\Z/N\Z$. The dynamics are specified by a collection of strictly
positive conductances $\{a_j\}_{j\in\Z/N\Z}$, where $a_j$ is associated with the undirected edge
between $j$ and $j+1$ (indices taken modulo $N$). The generator $\mathcal{L}_N$ acts on functions
$f:\Z/N\Z\to\R$ by
\begin{equation}
\label{eq:generator}
(\mathcal{L}_N f)(j)
=
a_j\,\bigl(f(j+1)-f(j)\bigr)
\;+
a_{j-1}\,\bigl(f(j-1)-f(j)\bigr).
\end{equation}
We write $p^{\mathrm{cyc}}_t(j,k)$ for the associated heat kernel,
\(p^{\mathrm{cyc}}_t(j,k)=(e^{t\mathcal{L}_N}\mathbf{1}_{\{k\}})(j)\), and we denote by $D>0$ the
macroscopic diffusion constant appearing in the Gaussian scaling limit established later.

As described above, the trace of the heat kernel associated with the finite dynamics captures
global information about the system but contains a universal singular contribution reflecting
diffusive behavior. This singular term is independent of the fine structure of the dynamics and
depends only on the macroscopic scaling. To isolate the genuinely structural content, we
subtract this term in a canonical way.

\begin{definition}[Scaling-limit trace and completed trace kernel]
\label{def:scaling-limit}
Fix a macroscopic length $L>0$ and choose a scaling parameter $s\to\infty$ with $N=N(s)$ such
that $N/s\to L$. Define the scaling-limit trace
\[
K_L(t):=\lim_{s\to\infty} N(s)\,p^{\mathrm{cyc}}_{s^2 t}(0,0),\qquad t>0,
\]
and set the (scaling-limit) completed trace kernel
\[
\widetilde K_L(t):=K_L(t)-\frac{L}{\sqrt{4\pi D t}}.
\]
When $L$ is fixed we suppress the subscript and write $\widetilde K$.
\end{definition}

\begin{definition}[Full centering and half-density normalization]
\label{def:half-density}
Define the fully centered completed kernel
\[
\widetilde K_L^{\star}(t):=\widetilde K_L(t)-1=K_L(t)-1-\frac{L}{\sqrt{4\pi D t}},\qquad t>0,
\]
and the associated half-density kernels
\[
\widetilde K_{\mathrm{sym}}(t):=t^{-1/2}\,\widetilde K(t),
\qquad
\widetilde K^{\star}_{\mathrm{sym}}(t):=t^{-1/2}\,\widetilde K^{\star}(t).
\]
\end{definition}

It is important for what follows that the scaling-limit trace kernel is obtained purely from the
Markov dynamics via a lift--periodize procedure and a controlled Gaussian scaling limit.
No Mellin transforms, functional equations, or arithmetic input enter at this stage; those appear only
after the scaling limit has produced the explicit theta-series kernel. This separation will be used
repeatedly below to keep the comparison argument noncircular.

The existence of the scaling limit in Definition~\ref{def:scaling-limit}, together with the
domination estimates needed to justify termwise limits and integral interchanges, follows from
the standing assumptions recorded below; for convenience we collect the relevant analytic
justifications in Appendix~\ref{app:technical}.

\begin{remark}[Standing assumptions and existence of the scaling limit]
\label{rem:standing-assumptions}
We work throughout in a strong near-homogeneity regime in which the scaling-limit trace $K_L(t)$
exists for every $L>0$ and $t>0$, with convergence locally uniform in $t$ on compact subsets of
$(0,\infty)$. Under these assumptions the limit admits the explicit theta-series form stated in
Theorem~\ref{thm:theta-series-form-recall}. Analytic interchanges (limits, sums, differentiation,
and Laplace/Mellin integrals) are justified by the domination estimates recorded in
Appendix~\ref{app:technical}.

\end{remark}

\subsection{Main results}
We now state the main outputs in a form convenient for later reference. In addition to a
total-positivity output and an Archimedean Mellin identification, the present paper supplies the
continuation-and-rigidity mechanism on overlap strips that converts these two constructions into a
divisor comparison in a common strip regime.

\begin{theorem}[Main theorem (overview)]
\label{thm:main}
Assume the standing assumptions of Remark~\ref{rem:standing-assumptions}, so that the scaling-limit
kernel $K_L$ exists and the self-dual normalization may be fixed.
Then:
\begin{enumerate}[label=(\roman*)]
\item (\emph{Unconditional structural output.})
There exists a nonnegative kernel $\Phi\in L^1(\mathbb R)$ arising canonically from the
symmetric half-density kernel $\widetilde K_{\mathrm{sym}}$ such that:
\begin{itemize}
\item $\Phi\in\mathrm{cycle-spectral}_\infty$;
\item with $\Phi^{\star}:=\Phi-e^{-x/4}$, the bilateral Laplace transform $\mathcal B\Phi^{\star}$
admits a canonical Schoenberg--Edrei--Karlin representation
\[
\mathcal B\Phi^{\star}(s)=\frac{E^{\star}(s)}{P_N(s)},
\]
with $E^{\star}$ entire and $P_N$ real-entire (self-adjoint spectral), hence $P_N$ has only real zeros.
\end{itemize}

\item (\emph{Unconditional Archimedean Mellin identification.})
At the self-dual scale fixed in Section~\ref{sec:completion-selfdual}, the Archimedean-completed
kernel $\widetilde K_{\mathrm{arch}}:=\mathcal A(K_L-1)$ has Mellin transform
$F_{\mathrm{arch}}(z)=\Xi(2z)$ as in Theorem~\ref{thm:arch-mellin-identification}.

\item (\emph{Continuation and strip-unit forcing.})
Using the exact centering built into $\widetilde K$ (Section~\ref{sec:completion-selfdual}),
modular splitting yields a continuation of $\mathcal B\Phi^{\star}$ to a left strip, together with a
left-boundary identity placing $\Xi(2\cdot)$ and $\mathcal B\Phi^{\star}$ in a common strip setting.
On overlap strips, strip-uniform estimates for the explicit kernels yield two-ended anchoring and a
quantitative rectangle argument that would force the normalized seam ratio to be a strip-unit (conditional on Hypothesis~\ref{hyp:sector-control}). Consequently,
on each admissible overlap strip $S_\eta$ the Mellin-side function $w\mapsto\Xi(2w)$ differs from the
real-entire (self-adjoint spectral) divisor factor $w\mapsto P_N(w)$ by a strip-unit.
\end{enumerate}
\end{theorem}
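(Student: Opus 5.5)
The plan is to prove the three parts separately, since they rest on different inputs. Part (iii) is the only part that is conditional, and all of its depth sits inside Hypotheses~\ref{hyp:seam-holomorphy} and~\ref{hyp:sector-control}.

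For (i), I would start from the explicit theta-series form of $K_L$ in Theorem~\ref{thm:theta-series-form-recall}. Passing to the half-density kernel $\widetilde K_{\mathrm{sym}}$ and changing variables $t=e^{x}$ (up to the self-dual scaling) gives a function $\Phi$ on $\R$. Nonnegativity and integrability of $\Phi$ come from positivity of the heat kernel together with the domination estimates of Appendix~\ref{app:technical}. The main substep in (i) is membership in the cycle-spectral$_\infty$ class. For this I would show that $\Phi$ is a locally uniform limit of Laplace inverses of reciprocals of finite spectral determinants $\det(\widetilde q_N(s)I-L_N)$. Since $L_N$ is self-adjoint, each such determinant has only real zeros. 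The Schoenberg--Edrei--Karlin theory \cite{Karlin1968} is closed under such limits, and it then yields the factorization $\mathcal B\Phi^{\star}=E^{\star}/P_N$. Subtracting $e^{-x/4}$ removes exactly the constant mode, which is the "$-1$" of Definition~\ref{def:half-density}.

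For (ii), the argument is Riemann's classical computation. I would write $K_L-1$ as a theta sum, apply the Archimedean completion operator $\mathcal A$, and integrate termwise against $t^{z-1}$. This is justified by the domination estimates. One then uses the Jacobi inversion $\theta(1/t)=\sqrt t\,\theta(t)$ at the self-dual scale to symmetrize, so that the polar terms $1/(z-\cdot)$ cancel against the subtracted singular term $L/\sqrt{4\pi Dt}$ and the constant $1$. What remains is $\tfrac12 s(s-1)\pi^{-s/2}\Gamma(s/2)\zeta(s)$ at $s=\tfrac12+2iz$ (or the corresponding normalization), that is, $\Xi(2z)$, as in Theorem~\ref{thm:arch-mellin-identification}.

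For (iii), I would first use modular splitting at $t=1$ to continue $\mathcal B\Phi^{\star}$ into a left strip. The piece on $t<1$ is mapped by inversion to $t>1$, where the exact centering makes it exponentially decaying. This places $\Xi(2\cdot)$ and $\mathcal B\Phi^{\star}$ in a common strip. The rigidity step is then a rectangle argument-principle computation, carried out in three stages.
\begin{enumerate}
\item Under Hypothesis~\ref{hyp:seam-holomorphy}, Rouché's theorem on $\partial R_T$ shows that $X$ and $U_{\eta,N}P_N$ have the same number of zeros in $R_T$, counted with multiplicity.
\item Under Hypothesis~\ref{hyp:sector-control}, $\widetilde R$ is holomorphic near $\overline{R_T}$ and $\arg\widetilde R$ stays in $[-\theta,\theta]$ on $\partial R_T$. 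Hence the winding number $\frac{1}{2\pi}\Delta_{\partial R_T}\arg\widetilde R$ is zero, so $\widetilde R$ has no zeros in $R_T$.
\item Holomorphy of $\widetilde R$ means every zero of $P_N$ is matched by a zero of $X$ of at least the same order, since $N$ is a unit. Combining this with the zero count from stage 1 forces the divisors of $X$ and $P_N$ to coincide in $R_T$.
\end{enumerate}
Letting $T\to\infty$ exhausts $S_\eta$. Finally, $\log\widetilde R$ has bounded real part behavior coming from the sector condition (because $\theta<\pi/2$), which gives the bounded-type claim via standard factorization \cite{Koosis1998}.

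The main obstacle is not the rigidity lemma, which is elementary once the hypotheses are granted. It is making stages 1--3 coherent when $N$ is chosen as a function of $T$. Divisor identification on all of $S_\eta$ requires a single comparison function, or a compatibility argument showing that the divisors of $P_{N(T)}$ stabilize in each fixed compact set as $T$ grows. I would try to prove that stabilization by Hurwitz's theorem, applied to a locally uniform limit of the normalized $P_{N(T)}$. I expect this step, together with verifying the boundary hypotheses themselves (deferred to Paper~III), to carry essentially all of the genuine difficulty.
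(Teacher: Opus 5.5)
There is a genuine gap, and it sits exactly where you locate the main obstacle. But it is an obstruction, not a technicality.

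\textbf{Part (iii): divisor identification cannot hold.} Your stages 1--3 are valid as implications. Stage~3 is genuinely needed: the paper's Lemma~\ref{lem:rouche-rectangle} passes from Rouch\'e's equal counts directly to equal divisors, which does not follow. The trouble is the reference function itself.
\begin{itemize}
\item Since $\lambda_0^{(N)}=0$ and $\widetilde q_N(w)=(N/\pi)^2\sin^2(\pi w/N)$, the $k=0$ factor gives $P_N$ a double zero at $w=0$ for every $N$.
\item More precisely, $P_N(w)=0$ iff $\sin(\pi w/N)=\pm\frac{2\pi}{N}\sin\frac{\pi k}{N}$ for some $k$. For $N\ge7$ this has only real roots, so all $2N$ zeros of $P_N$ in a period lie in $[-2-O(N^{-2}),\,2+O(N^{-2})]+N\Z$.
\item But $\Xi(0)=\xi(\tfrac12)\neq0$, and $\Xi(2w)$ has no zeros with $|\Re w|<7$.
\end{itemize}
Hence, for any zero-free normalizing unit, the seam ratio has a pole at $w=0\in R_T$. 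The holomorphy demanded by Hypothesis~\ref{hyp:sector-control} therefore fails for every $N$ and $T$, and the divisor equality of your stage~3 is false. The final clause of (iii), like Theorem~\ref{thm:strip-unit}, can hold only vacuously: no strip is admissible.

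Your Hurwitz stabilization cannot start either. If $N(T)\to\infty$ along a sequence, $P_{N(T)}$ has exactly $2N(T)$ zeros in $|w|<3$. So no normalization of $P_{N(T)}$ has a subsequence converging locally uniformly to a nonzero entire function. If instead $N(T)$ is constant along a subsequence, you have a single comparison function, which the pole at $0$ already rules out. The paper offers no way around this either (Section~\ref{sec:exhaustion} defers the choice $N=N(T)$ to Paper~III). Your diagnosis is sharper than the paper's, but the proposed repair is impossible.

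\textbf{Other steps that would fail as written.}
\begin{itemize}
\item \emph{Part (i).} The paper proves nothing here; it imports Theorem~\ref{thm:SEK} from Paper~I, which supplies only an unspecified real-entire $\Psi$. Your route through limits of inverse Laplace transforms of $1/P_N$ plus closure of SEK theory cannot produce the fixed finite determinant $P_N$ as the denominator. By the count above, the normalized $P_N$ have no nonzero limit at all. At best one gets some Laguerre--P\'olya $\Psi$, which is exactly what Theorem~\ref{thm:SEK} records.
\item \emph{Nonnegativity of $\Phi$.} This does not come from positivity of the heat kernel, which says nothing about $K_L-L/\sqrt{4\pi Dt}$. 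It comes from Jacobi inversion: $\widetilde K_L(t)=(t')^{-1/2}\bigl(\vartheta(1/t')-1\bigr)\ge0$.
\item \emph{Modular splitting.} Take $\Phi^\star(x)=e^{-x/4}\widetilde K^\star(e^x)$ as in the proof of Lemma~\ref{lem:boundary-Theta}. Then $g(t)=\vartheta(t)-1-t^{-1/2}$ satisfies $g(1/t)=t^{1/2}g(t)$, so $\Phi^\star(-x)=e^{x}\Phi^\star(x)$, not $e^{-x/2}\Phi^\star(x)$ as in Lemma~\ref{lem:overlap}. Moreover $\Phi^\star(x)\sim-e^{-3x/4}$ as $x\to+\infty$, because the subtracted $-t^{-1/2}$ dominates $\widetilde K^\star(t)$ for large $t$. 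So the inverted piece decays only at rate $\tfrac34$, and the split integrals converge exactly on $-\tfrac34<\Re s<-\tfrac14$. That is already the convergence strip of $\mathcal B\Phi^\star$, since $\Phi^\star(x)\sim-e^{|x|/4}$ as $x\to-\infty$. Splitting therefore yields no continuation, though the line $\Re s=-\tfrac12$ is interior anyway.
\item \emph{Bounded type.} The sector condition bounds $\Im\log\widetilde R=\arg\widetilde R$, not $\Re\log\widetilde R$. The correct step is this: once $\widetilde R$ is holomorphic and zero-free, the maximum principle for the harmonic function $\arg\widetilde R$ gives $|\arg\widetilde R|\le\theta<\pi/2$ inside. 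Hence $\Re\widetilde R>0$, and $\widetilde R=(1+\phi)/(1-\phi)$ with $|\phi|<1$.
\item \emph{Part (ii).} This is fine as the classical computation, which the paper also only cites. Note that after applying $\mathcal A$ there are no polar terms to cancel, since $\mathcal A$ annihilates $1$ and $t^{-1/2}$.
\end{itemize}
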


\begin{theorem}[Conditional strip-unit theorem for the normalized seam function]\label{thm:strip-unit}
Let $\eta>0$ and set $S_\eta:=\{w\in\mathbb C:|\Im w|<\eta\}$.  Let $P_N$ be the size--$N$
cycle spectral determinant reference function from Paper~\cite{PaperI}, and let $X(w):=\Xi(2w)$.
Define the seam ratio
\[
R_N(w):=\frac{X(w)}{P_N(w)}\qquad (w\in S_\eta),
\]
interpreted as a meromorphic function with possible poles only at zeros of $P_N$.
Let $\mathcal N_{\eta,N}$ be the explicit zero-free unit on $S_\eta$ constructed from the rational
anchor (Section~\ref{sec:completion-ledger}), and set $\widetilde R_N(w):=\mathcal N_{\eta,N}(w)R_N(w)$.

Assume Hypothesis~\ref{hyp:seam-holomorphy} and Hypothesis~\ref{hyp:sector-control} hold for every strict
substrip $S_{\eta_0}$ with $0<\eta_0<\eta$.  Then $\widetilde R_N$ is holomorphic and zero-free on
$S_\eta$.  Consequently $R_N$ is a strip-unit on $S_\eta$.
\end{theorem}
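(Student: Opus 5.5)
The plan is a Rouché-type argument on the expanding rectangles $R_T$, followed by an exhaustion of $S_\eta$. Fix a strict substrip parameter $0<\eta_0<\eta$ and work on $S_{\eta_0}$.

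\textbf{Step 1: equal zero counts on each rectangle.} By Hypothesis~\ref{hyp:seam-holomorphy} there is a zero-free unit $U=U_{\eta_0,N}$ with
\[
\sup_{\partial R_T}|X-UP_N|<\inf_{\partial R_T}|UP_N|
\qquad (T\ge T_0(\eta_0)).
\]
The strict inequality gives $UP_N\neq0$ on $\partial R_T$. The symmetric Rouché theorem, applied to $X$ and $UP_N$ on the closed rectangle, then shows that $X$ and $UP_N$ have the same number of zeros in $R_T$, counted with multiplicity. Since $U$ is zero-free, $UP_N$ has exactly the zeros of $P_N$.

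\textbf{Step 2: holomorphy and zero-freeness of the ratio.} By Hypothesis~\ref{hyp:sector-control}, $\widetilde R_N$ is holomorphic on a neighborhood of $\overline{R_T}$. Hence every pole of $R_N$ coming from a zero of $P_N$ is cancelled by a zero of $X$ of at least equal order, because $\mathcal N_{\eta,N}$ is a unit and cannot supply the cancellation. Equivalently, $\operatorname{div}(X)\ge\operatorname{div}(P_N)$ inside $R_T$.

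The sector condition also says that $\widetilde R_N(\partial R_T)$ lies in $\{|\phi|\le\theta\}$ with $\theta<\pi/2$. This set is a closed sector contained in the right half-plane, so it does not wind around the origin. Therefore the continuous branch of $\arg\widetilde R_N$ along $\partial R_T$ has net change $0$. By the argument principle, $\widetilde R_N$, being holomorphic on $R_T$, has no zeros inside $R_T$.

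Combining this with the divisor inequality gives $\operatorname{div}(X)=\operatorname{div}(P_N)$ on $R_T$. The Rouché count in Step 1 gives the same identity independently, so the two hypotheses corroborate each other. Using either one, $\widetilde R_N$ is holomorphic and zero-free on the interior of $R_T$.

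\textbf{Step 3: exhaustion.} Let $T\to\infty$ along $T\ge T_1(\eta_0)$. The rectangles $R_T$ exhaust $S_{\eta_0}$, and by the identity theorem the conclusions on the successive rectangles are compatible. So $\widetilde R_N$ is holomorphic and zero-free on $S_{\eta_0}$. Since $\eta_0<\eta$ was arbitrary and $S_\eta=\bigcup_{\eta_0<\eta}S_{\eta_0}$, the same holds on $S_\eta$.

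Finally, $R_N=\mathcal N_{\eta,N}^{-1}\widetilde R_N$ is a product of zero-free holomorphic functions, so $R_N$ is a strip-unit. The bounded-type assertion mentioned in the introduction would follow from the sector bound: for instance, $\log\widetilde R_N$ has imaginary part bounded by $\theta$ on the boundaries, which then propagates via harmonic measure. I would treat this as a remark rather than part of the theorem as stated.

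\textbf{Main obstacle.} The delicate point is the dependence on $N$. Hypothesis~\ref{hyp:seam-holomorphy} allows $N=N(T)$, so the objects $P_N$ on different rectangles may differ, and the exhaustion in Step 3 is only legitimate for a fixed $N$. My approach would be to fix $N$ throughout, reading both hypotheses as holding for that $N$ and all large $T$, since the theorem is stated for a fixed size-$N$ determinant. Only if that reading fails would I fall back on a normal-families or diagonal argument over the admissible $N(T)$ regime.

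A secondary check is that the rectangle boundaries avoid zeros of $X$ and $P_N$. This follows from the strict Rouché inequality together with the sector condition, which excludes $r=0$ on $\partial R_T$.
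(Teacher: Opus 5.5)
Your argument is correct and follows essentially the paper's own route---Rouch\'e on the rectangles $R_T$ (Lemma~\ref{lem:rouche-rectangle}), winding-number exclusion of zeros under sector control (Lemma~\ref{lem:rectangle-exclusion}), and exhaustion of $S_\eta$ (Section~\ref{sec:exhaustion})---and you are more careful than Lemma~\ref{lem:rouche-rectangle} in noting that Rouch\'e yields only equal zero counts, so divisor equality really comes from the holomorphy and sector parts of Hypothesis~\ref{hyp:sector-control}, which by themselves already give the conclusion for fixed $N$. Be aware, though, that for fixed $N$ the hypotheses can never be met: the $k=0$ factor $\widetilde q_N(w)-\lambda_0^{(N)}=\widetilde q_N(w)\sim w^2$ gives $P_N$ a double zero at $w=0$ while $X(0)=\xi(\tfrac12)\neq0$, so $\widetilde R_N$ has a pole at the origin, the theorem holds only vacuously, and your $N=N(T)$ worry is moot.
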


\subsection{Organization of the paper}
Section~\ref{sec:inputs-from-I} lists the specific statements imported from \cite{PaperI} that are
used as black boxes in the present sequel.
Section~\ref{sec:lb-continuation} establishes the left-strip continuation and the boundary identities
needed to place the cycle-spectral and Mellin-side objects in a common strip setting.
Section~\ref{sec:completion-ledger} carries out the strip-unit/rectangle argument and the resulting
divisor identification on overlap strips.
Appendix~\ref{app:technical} collects analytic justifications for termwise operations, boundary terms,
and uniform strip estimates.

\section{Inputs from Paper~I}
\label{sec:inputs-from-I}

This paper is a continuation of \cite{PaperI}.  We recall here, in a self-contained way, the
definitions and statements from \cite{PaperI} that will be used later in the strip comparison.
Our aim is not to repeat the full development of \cite{PaperI}, but to give the reader a clear
roadmap of the objects and why they matter for what follows.

There are two parallel outputs from the scaling-limit trace kernel:
\begin{enumerate}[label=(\roman*)]
\item an \emph{Archimedean-completed} kernel on $(0,\infty)$ whose Mellin transform is the classical
completed zeta function (hence the Riemann $\Xi$-function after reparameterization);
\item a \emph{cycle spectral reference family} obtained from the self-adjoint size--$N$ cycle operator (e.g.\ the Laplacian), whose spectral determinant approximants $P_N$ are real entire with only real zeros,
providing a rigid real-zero anchor for the strip comparison.
\end{enumerate}
The main task of the present paper is to compare these two outputs on an overlap strip.

We will repeatedly use three structural facts established in \cite{PaperI}:
\begin{enumerate}
\item the scaling-limit trace has a theta-series form (Theorem~\ref{thm:theta-series-form-recall});
\item there is a unique macroscopic normalization at which Jacobi inversion acts as
$t\mapsto t^{-1}$ in the Mellin variable (Lemma~\ref{lem:forced-selfdual});
\item the cycle spectral determinant family $P_N$ is real entire with real zeros (by self-adjointness), and its rescaled spectral map $\widetilde q_N$ converges locally uniformly to $w^2$;
and~\ref{thm:SEK}).
\end{enumerate}

\begin{theorem}[Schoenberg--Edrei--Karlin factorization {\cite[Theorem~5.4]{PaperI}}]\label{thm:SEK}
Let $B_\Phi(s):=\int_{\R}\Phi(x)e^{-sx}\,dx$ be the bilateral Laplace transform of $\Phi$. Then there exist an entire zero-free function $E(s)$ and a real-entire (self-adjoint spectral) entire function $\Psi(s)$ such that
\[
B_\Phi(s)=\frac{E(s)}{\Psi(s)}.
\]
In particular, all zeros of $\Psi$ are real.
\end{theorem}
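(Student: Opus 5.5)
The plan is to derive the factorization from Schoenberg's characterization of Pólya frequency functions, realized through the finite cycle operators. Write $\Phi$ as the limit of kernels $\Phi_N$ built from the size-$N$ self-adjoint cycle operator, for instance through the rescaled spectral map $\widetilde q_N$ applied to $L_N$. Each $\Phi_N$ should be a finite convolution of one-sided exponential densities whose rates are (rescaled) eigenvalues of $L_N$. For such a $\Phi_N$ the bilateral Laplace transform is explicitly
\[
B_{\Phi_N}(s)=\frac{c_N}{\prod_{k}(1+\delta_{k,N}s)},
\]
with $\delta_{k,N}$ real. Real-valuedness of the $\delta_{k,N}$ is exactly where self-adjointness of $L_N$ enters. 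Thus $1/B_{\Phi_N}$ is a polynomial with only real zeros, essentially $\det(\widetilde q_N I - L_N)$ up to normalization.

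Next I pass to the limit $N\to\infty$. The domination estimates of Appendix~\ref{app:technical} should give $\Phi_N\to\Phi$ in $L^1$ with uniform exponential tails on a fixed vertical strip. Hence $B_{\Phi_N}\to B_\Phi$ locally uniformly on that strip, and $B_\Phi$ is nonvanishing near $s=0$ because $B_\Phi(0)=\int\Phi>0$. It follows that $1/B_{\Phi_N}\to 1/B_\Phi$ locally uniformly on a neighborhood of the origin. Real-rooted polynomials converging locally uniformly near $0$ lie in the Laguerre--P\'olya class. The classical Laguerre--P\'olya closure theorem (via Hurwitz and the Hadamard-type bounds from real-rootedness, $\sum\delta_{k,N}^2$ bounded) then upgrades this to locally uniform convergence on all of $\C$. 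The limit $1/B_\Phi$ therefore extends to an entire function of the form
\[
C e^{-\gamma s^2+\delta s}\prod_k(1+\delta_k s)e^{-\delta_k s},\qquad \gamma\ge0,\ \delta_k\in\R,\ \sum\delta_k^2<\infty .
\]

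Finally I set $\Psi(s):=\prod_k(1+\delta_k s)e^{-\delta_k s}$, which is real-entire with only real zeros, and $E(s):=C^{-1}e^{\gamma s^2-\delta s}$, which is entire and zero-free. This gives $B_\Phi=E/\Psi$ as a meromorphic identity on $\C$, which is the continuation of the Laplace transform off its strip of convergence.

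I expect the main obstacle to be the limit step, specifically obtaining the uniform bound on $\sum_k\delta_{k,N}^2$. This is equivalent to a uniform bound on the second cumulant (variance) of $\Phi_N$, which is what prevents the limit from degenerating or acquiring non-real zeros. I would first try to get it from the Gaussian scaling limit itself, since the variance of $\Phi_N$ is controlled by the diffusion constant $D$ and the second moment converges by the domination estimates. If that fails, I would instead verify directly that $\Phi$ is a P\'olya frequency function, i.e. that the membership $\Phi\in$ cycle-spectral$_\infty$ yields total positivity of all orders. In that case Schoenberg's theorem gives the representation of $1/B_\Phi$ at once.
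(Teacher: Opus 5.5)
\textbf{There is a genuine gap, and the statement as formulated cannot be proved.} The first step that fails is your premise that the cycle approximants $\Phi_N$ are finite convolutions of one-sided exponential densities, so that $B_{\Phi_N}(s)=c_N/\prod_k(1+\delta_{k,N}s)$. Nothing in the construction gives this, and it is not what the cycle produces. The heat trace $N\,p^{\mathrm{cyc}}_t(0,0)=\sum_k e^{-t\mu_k}$ is \emph{additive} over the spectrum (a mixture), whereas a convolution corresponds to a \emph{product} of transforms. After $t=e^x$ and the weight $e^{-x/4}$, an eigenvalue $\mu>0$ contributes $e^{-x/4}\exp(-\mu e^x)$. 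Its bilateral transform is $\mu^{s+\frac14}\Gamma(-\tfrac14-s)$ for $\Re s<-\tfrac14$. So the natural finite-$N$ kernels have transforms of the form $\Gamma(-\tfrac14-s)\sum_k m_k\mu_k^{s+\frac14}$: a Gamma factor times an exponential sum. Already for two distinct eigenvalues this has non-real zeros, and its reciprocal is not a polynomial at all. Self-adjointness makes the $\mu_k$ real; it does not make the zeros of $1/B_{\Phi_N}$ real. Likewise $P_N(w)=\det(\widetilde q_N(w)I-L_N)$ is a polynomial in $\cos(2\pi w/N)$, not of the form $\prod_k(1+\delta_k s)$, so it cannot be $1/B_{\Phi_N}$ ``up to normalization''. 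The variance bound you single out is not the real difficulty: if $B_{\Phi_N}\to B_\Phi$ uniformly near $0$, then $\sum_k\delta_{k,N}^2=(\log B_{\Phi_N})''(0)$ converges automatically.

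More fundamentally, neither the limit argument nor your fallback can be repaired, because the conclusion is false for the $\Phi$ of this paper. Since $E/\Psi$ is meromorphic on $\C$, an identity $B_\Phi=E/\Psi$ on the half-plane of convergence forces the meromorphic continuation of $B_\Phi$ to be zero-free. The paper's formulas $\Phi^\star=\Phi-e^{-x/4}$ and $\Phi^\star(x)=e^{-x/4}\widetilde K^\star(e^x)$ give $\Phi(x)=e^{-x/4}\bigl(\vartheta(e^x)-e^{-x/2}\bigr)$. The classical Mellin transform of $\vartheta(t)-t^{-1/2}$ then yields, for $\Re s>-\tfrac14$,
\[
B_\Phi(s)=2\pi^{s+\frac14}\,\Gamma\bigl(-\tfrac14-s\bigr)\,\zeta\bigl(-\tfrac12-2s\bigr)=\frac{4\,\xi\bigl(-\tfrac12-2s\bigr)}{\bigl(2s+\tfrac12\bigr)\bigl(2s+\tfrac32\bigr)}.
\]
The right-hand side is the continuation, and it vanishes at $s=-\tfrac14-\tfrac{\rho}{2}$ for every nontrivial zero $\rho$ of $\zeta$, for example at $s=-\tfrac12-\tfrac{i\gamma}{2}$ with $\gamma\approx 14.13$. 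A different power weight $e^{-ax}$ would merely translate $s$. This is the same mechanism behind the paper's own left-boundary identity (Theorem~\ref{thm:LB-identity}), which ties the continued transform on $\Re s=-\tfrac12$ to $\Xi(2z)$. So the only factorization of the requested shape has numerator $4\xi(-\tfrac12-2s)$, which is not zero-free.

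Consequently $\Phi$ is not a P\'olya frequency function: Schoenberg's theorem would give $B_\Phi=1/\Psi$ with $\Psi$ in the Laguerre--P\'olya class, hence zero-free. Nor is $\Phi$ a pointwise limit of P\'olya frequency functions, since total positivity of all orders passes to pointwise limits of continuous kernels. Both your main route and your fallback therefore break. The paper itself supplies no argument here: it imports the claim from \cite{PaperI}, whose revised version (Section~\ref{sec:spectral-ref}) explicitly abandons P\'olya-frequency rigidity.
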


\begin{remark}
We use only the qualitative features of this factorization: $E$ carries no zeros, while $\Psi$ carries a real divisor. The normalization is unique up to a nonzero constant.
\end{remark}

Whenever we appeal to a deeper argument from \cite{PaperI} (e.g.\ spectral approximation of $\Phi$),
we cite it explicitly; otherwise we include short derivations for the reader's convenience.

\section{Self-dual Archimedean completion and forced normalization}
\label{sec:completion-selfdual}

The Mellin transform in later sections is taken in the same variable $t$ in which the scaling-limit
trace is naturally expressed.  For the strip comparison it is therefore important to choose the
macroscopic normalization so that Jacobi inversion acts as the involution $t\mapsto t^{-1}$ in
that same variable, rather than only after an auxiliary change of variables.  This section isolates
that normalization and records the corresponding completed Archimedean kernel.

\subsection{Theta-series form and the forced self-dual scale}

Recall from \cite{PaperI} that the macroscopic length parameter $L>0$ enters via the scaling relation
$N(s)/s\to L$ in Definition~\ref{def:scaling-limit}.  Under the standing assumptions, the
scaling-limit trace admits an explicit theta-series representation.

\begin{theorem}[Theta-series form of the scaling-limit trace {\cite[Theorem~X]{PaperI}}]
\label{thm:theta-series-form-recall}
For every $L>0$ and $t>0$,
\begin{equation}\label{eq:KL-theta-recall}
K_L(t)=\sum_{n\in\Z}\exp\!\Bigl(-\frac{4\pi^2D}{L^2}\,n^2\,t\Bigr).
\end{equation}
\end{theorem}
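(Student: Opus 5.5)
The plan is to reach \eqref{eq:KL-theta-recall} by the lift--periodize route recalled in the introduction: write the cycle heat kernel as a sum of lifted heat kernels on $\Z$, pass to the Gaussian scaling limit term by term, and then use Poisson summation to turn the resulting sum over spatial windings into the spectral theta series. The one analytic input I have to supply myself is a local CLT on $\Z$, assumed to hold uniformly in the strong near-homogeneity regime of Remark~\ref{rem:standing-assumptions}.

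\emph{Step 1 (lift).} Extend the conductances $N$-periodically to $\Z$ and let $p^{\Z}_t(j,k)$ be the heat kernel of the lifted nearest-neighbour chain. Projecting paths from $\Z$ to $\Z/N\Z$ gives the exact identity
\[
p^{\mathrm{cyc}}_t(0,0)=\sum_{m\in\Z}p^{\Z}_t(0,mN).
\]
This holds because the covering map intertwines the two generators \eqref{eq:generator}.

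\emph{Step 2 (Gaussian limit, termwise).} Set $t\mapsto s^2t$ and multiply by $N(s)$. The local CLT with diffusion constant $D$ gives, for each fixed $m$,
\[
N\,p^{\Z}_{s^2t}(0,mN)\;\longrightarrow\;\frac{L}{\sqrt{4\pi Dt}}\exp\!\Bigl(-\frac{m^2L^2}{4Dt}\Bigr),
\]
using $N/s\to L$.

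\emph{Step 3 (interchange).} To exchange the limit $s\to\infty$ with $\sum_m$ on compact $t$-sets, I would use a Carne--Varopoulos (or Davies--Gaffney) Gaussian upper bound
\[
p^{\Z}_{u}(0,x)\le C u^{-1/2}e^{-c x^2/u}+e^{-c|x|}.
\]
With $u=s^2t$ and $x=mN$, this dominates the summands uniformly in $s$ by $C'e^{-c'm^2}$. Dominated convergence then gives the winding-sum form
\[
K_L(t)=\sum_{m}\frac{L}{\sqrt{4\pi Dt}}e^{-m^2L^2/(4Dt)}.
\]
This is the estimate deferred to Appendix~\ref{app:technical}.

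\emph{Step 4 (Poisson).} Apply Poisson summation to $g(x)=\frac{L}{\sqrt{4\pi Dt}}e^{-x^2L^2/(4Dt)}$. Its Fourier transform is $\hat g(n)=\exp(-4\pi^2Dn^2t/L^2)$, which yields \eqref{eq:KL-theta-recall}. The $m=0$ term is exactly $L/\sqrt{4\pi Dt}$, the singular term removed in Definition~\ref{def:scaling-limit}.

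As a consistency check, in the homogeneous case $a_j\equiv a$ one can argue spectrally instead. There $p^{\mathrm{cyc}}_t(0,0)=N^{-1}\operatorname{tr}e^{t\mathcal L_N}$, so
\[
N\,p^{\mathrm{cyc}}_{s^2t}(0,0)=\sum_{k}\exp\!\bigl(-2as^2t(1-\cos(2\pi k/N))\bigr),
\]
summed over $k\in(-N/2,N/2]$. The bound $1-\cos\theta\ge 2\theta^2/\pi^2$ gives domination by $e^{-ck^2t/L^2}$, and termwise convergence gives the theta series directly with $D=a$.

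The main obstacle is Step 2 together with Step 3 for genuinely inhomogeneous conductances. There the local CLT must hold at the \emph{periodic} scale $x=mN\asymp s$, with error uniform in the period $N$, which is itself growing. I would first try homogenization: a corrector for the periodic lift gives $D$ as the harmonic mean of the conductances. The near-homogeneity assumption should then make the corrector small uniformly in $N$, so that a Nash--Aronson heat-kernel bound plus a parabolic Harnack/Hölder argument upgrades weak convergence to the pointwise local limit.
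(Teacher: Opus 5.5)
Your argument follows the route the paper indicates: lift--periodize, a Gaussian scaling limit, then Poisson/Jacobi to pass between the winding sum and the spectral theta series. In the translation-invariant case it is complete. Your direct spectral computation at time $s^2t$ with prefactor $N$ is in fact the correctly scaled version of the Fourier argument in the paper's appendix lemma on the uniform local CLT; that lemma is stated at unscaled time $t$, where its claimed $O(1/N)$ Gaussian approximation fails. The only repair needed is in Step~3: the $m=0$ term must be handled by Step~2, because there your bound gives $N(Cu^{-1/2}+1)$, which is not uniform in $s$.

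For genuinely inhomogeneous conductances, neither you nor this paper proves the uniform local CLT at the period scale $x=mN\asymp s$. The paper absorbs it into the standing near-homogeneity assumption and the citation to Paper~I. Your homogenization sketch closes only if that assumption enforces macroscopic homogeneity, e.g.\ $\sup_j|a^{(N)}_j-D|\to0$, which makes the corrector $o(N)$. A profile such as $a_j=a(j/N)$, varying on the macroscopic scale, instead produces a variable-coefficient diffusion on the circle, whose diagonal heat kernel is in general not a theta series.
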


It is convenient to rewrite this in Jacobi's standard form.  Set
\begin{equation}\label{eq:tprime}
t' := \frac{4\pi D}{L^2}\,t.
\end{equation}
Then \eqref{eq:KL-theta-recall} becomes
\[
K_L(t)=\sum_{n\in\Z}e^{-\pi n^2 t'}=:\vartheta(t'),
\qquad
\vartheta(u):=\sum_{n\in\Z}e^{-\pi n^2 u}.
\]
The classical inversion formula is
\begin{equation}\label{eq:jacobi}
\vartheta(u)=u^{-1/2}\,\vartheta(u^{-1}),\qquad u>0.
\end{equation}
For our later Mellin transforms, the relevant notion of ``self-duality'' is that Jacobi inversion
corresponds to $t\mapsto t^{-1}$ in the original $t$-variable.

\begin{lemma}[Forced self-dual normalization]
\label{lem:forced-selfdual}
The following are equivalent:
\begin{enumerate}[label=(\alph*)]
\item $t'(t^{-1})=(t'(t))^{-1}$ for all $t>0$.
\item $K_L$ satisfies an inversion symmetry in the $t$-variable,
\begin{equation}\label{eq:KL-selfdual}
K_L(t)=t^{-1/2}K_L(t^{-1}),\qquad t>0.
\end{equation}
\item The macroscopic scale satisfies
\begin{equation}\label{eq:selfdual}
L^2=4\pi D.
\end{equation}
\end{enumerate}
\end{lemma}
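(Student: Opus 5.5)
The plan is to reduce all three conditions to the single scalar $c:=4\pi D/L^2>0$, so that $t'(t)=ct$ by \eqref{eq:tprime} and $K_L(t)=\vartheta(ct)$ by Theorem~\ref{thm:theta-series-form-recall}. I will show that each of (a), (b) and (c) is equivalent to $c=1$. For (c) this is immediate from the definition of $c$.

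For (a), the identity reads $c\,t^{-1}=(ct)^{-1}$ for all $t>0$. This is equivalent to $c^2=1$, and hence to $c=1$ since $c>0$.

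For (b), I first show that $c=1$ implies the symmetry. If $c=1$, then $K_L=\vartheta$, and \eqref{eq:KL-selfdual} is exactly Jacobi inversion \eqref{eq:jacobi}.

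For the converse, assume \eqref{eq:KL-selfdual}, that is, $\vartheta(ct)=t^{-1/2}\vartheta(c/t)$ for all $t>0$. Apply \eqref{eq:jacobi} to the left-hand side to get $\vartheta(ct)=(ct)^{-1/2}\vartheta(1/(ct))$. Cancelling $t^{-1/2}$ then gives
\[
c^{-1/2}\,\vartheta\!\left(\frac{1}{ct}\right)=\vartheta\!\left(\frac{c}{t}\right),\qquad t>0 .
\]
Now substitute $u=1/t$ and let $u\to\infty$. Both theta arguments, $u/c$ and $cu$, tend to $+\infty$. Since $\vartheta(v)=1+2\sum_{n\ge1}e^{-\pi n^2 v}\to1$ as $v\to\infty$ (by dominated convergence, or the bound $\sum_{n\ge1}e^{-\pi n v}$), the limit yields $c^{-1/2}=1$, so $c=1$.

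The only step with any content is this converse direction (b)$\Rightarrow$(c). A pointwise comparison does not immediately pin down $c$, and the identity must be tested in a regime where $\vartheta$ is explicitly known. The large-argument asymptotics $\vartheta(v)\to1$ supply that regime, so the step is routine once the Jacobi inversion has been applied.
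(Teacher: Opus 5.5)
Your proof is correct. For (a)$\Leftrightarrow$(c) and for (c)$\Rightarrow$(b) it follows the paper's argument. With $c=4\pi D/L^2>0$, condition (a) reduces to $c^2=1$, and once $c=1$ the symmetry (b) is Jacobi inversion read in the variable $t$.

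The difference is in the converse (b)$\Rightarrow$(c). The paper handles it only with the remark that (b) ``is precisely Jacobi inversion transported back to $t$.'' That remark does not by itself exclude $c\neq1$. After applying Jacobi inversion, (b) becomes $c^{-1/2}\vartheta(u/c)=\vartheta(cu)$ for all $u>0$. At $u=1$ this holds for every $c>0$, again by Jacobi inversion, so something more than formal manipulation is needed.

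Your limit $u\to\infty$, using $\vartheta(v)\to1$, supplies exactly that missing step. Your write-up of this lemma is therefore more complete than the paper's.
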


\begin{proof}
By \eqref{eq:tprime}, one has $t'(t^{-1})=\frac{4\pi D}{L^2}t^{-1}$ and
$(t'(t))^{-1}=\bigl(\frac{4\pi D}{L^2}t\bigr)^{-1}=\frac{L^2}{4\pi D}\,t^{-1}$.
Thus (a) holds for all $t$ if and only if $\frac{4\pi D}{L^2}=\frac{L^2}{4\pi D}$,
i.e.\ $L^2=4\pi D$, which is (c).

Assuming (a), we have $K_L(t)=\vartheta(t'(t))$ and therefore, by \eqref{eq:jacobi},
\[
K_L(t)=\vartheta(t'(t))=(t'(t))^{-1/2}\vartheta\!\bigl((t'(t))^{-1}\bigr)
=(t'(t))^{-1/2}\vartheta(t'(t^{-1})).
\]
Since $(t'(t))^{-1/2}=\bigl(\frac{4\pi D}{L^2}t\bigr)^{-1/2}=t^{-1/2}\cdot(\frac{4\pi D}{L^2})^{-1/2}$
and the same constant rescales $\vartheta(t')$ back to $K_L$, this yields \eqref{eq:KL-selfdual};
conversely, \eqref{eq:KL-selfdual} is precisely Jacobi inversion transported back to $t$.
\end{proof}

\begin{remark}[Interpretation]
Condition \eqref{eq:selfdual} is the unique choice for which Jacobi inversion acts as
$t\mapsto t^{-1}$ in the Mellin variable used later.  This normalization is macroscopic: it fixes
the relationship between the continuum parameters $L$ and $D$ in the scaling limit and does not
require any special tuning of the microscopic conductances beyond the assumptions ensuring the
existence of $K_L$.
\end{remark}

\subsection{The Archimedean completion operator}

The theta-series trace contains two universal singular pieces at the ends $t\to 0$ and $t\to\infty$:
a constant contribution (the zero mode) and a $t^{-1/2}$ contribution (the diffusive singularity).
Following \cite{PaperI}, we remove these in a way compatible with Mellin analysis by applying a
second-order differential operator that annihilates both $1$ and $t^{-1/2}$.

\begin{definition}[Archimedean completion operator]
\label{def:arch-operator}
For a sufficiently smooth function $f:(0,\infty)\to\R$, define
\begin{equation}\label{eq:arch-operator}
(\mathcal A f)(t):=\frac{d}{dt}\Bigl(t^{3/2}\frac{d}{dt}f(t)\Bigr),\qquad t>0.
\end{equation}
\end{definition}

\begin{lemma}[The completion operator kills the singular pieces]
\label{lem:A-kills-singular}
One has \(\mathcal A(1)=0\) and \(\mathcal A(t^{-1/2})=0\).
\end{lemma}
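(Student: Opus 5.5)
The claim is a direct computation with the operator $\mathcal A$ of Definition~\ref{def:arch-operator}. We apply the definition \eqref{eq:arch-operator} to each of the two test functions in turn. The idea is to check, in each case, that the inner expression $t^{3/2}f'(t)$ is already constant, so that the outer derivative vanishes identically.

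For $f\equiv 1$ we have $f'(t)=0$ for all $t>0$. Hence $t^{3/2}f'(t)=0$, and differentiating once more gives $\mathcal A(1)=0$.

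For $f(t)=t^{-1/2}$ we have $f'(t)=-\tfrac12 t^{-3/2}$. The weight $t^{3/2}$ in \eqref{eq:arch-operator} cancels this power exactly:
\[
t^{3/2}f'(t)=-\tfrac12,
\]
a constant on $(0,\infty)$. Its derivative is zero, so $\mathcal A(t^{-1/2})=0$.

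There is no genuine obstacle. The only point to watch is that the weight in $\mathcal A$ is exactly $t^{3/2}$, the reciprocal of $|f'|$ up to the constant $\tfrac12$. That choice is precisely what turns the diffusive term into a constant before the final differentiation. By linearity, $\mathcal A$ therefore annihilates every combination $a+bt^{-1/2}$. This is the form used later: $\mathcal A(K_L-1)=\mathcal A(\widetilde K_L^{\star})$, since $\widetilde K_L^{\star}$ differs from $K_L-1$ only by a multiple of $t^{-1/2}$.
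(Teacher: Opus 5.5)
Your proof is correct and is exactly the paper's argument: $f'\equiv 0$ for $f\equiv 1$, and $t^{3/2}f'(t)\equiv -\tfrac12$ for $f(t)=t^{-1/2}$, so the outer derivative vanishes in both cases. Your closing observation that $\mathcal A(K_L-1)=\mathcal A(\widetilde K_L^{\star})$ by linearity is also right, since the two functions differ by $-\tfrac{L}{\sqrt{4\pi D}}\,t^{-1/2}$.
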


\begin{proof}
This is a direct calculation: $f\equiv 1$ gives $f'\equiv 0$.  For $f(t)=t^{-1/2}$,
one has $f'(t)=-(1/2)t^{-3/2}$ so that $t^{3/2}f'(t)\equiv -1/2$, whose derivative is $0$.
\end{proof}

\begin{definition}[Archimedean-completed kernel]
\label{def:arch-theta-kernel}
Let $K_L(t)$ be the scaling-limit trace.  Define the Archimedean-completed kernel
\[
\widetilde K_{\mathrm{arch}}(t) :=\bigl(\mathcal{A}(K_L-1)\bigr)(t),\qquad t>0.
\]
\end{definition}

\subsection{The classical $\Theta$-kernel at the self-dual scale}

At the self-dual normalization \eqref{eq:selfdual}, the theta-series form of $K_L$ becomes the
standard Jacobi theta function in the variable $t$, and $\widetilde K_{\mathrm{arch}}$ coincides with
a classical rapidly decaying theta-kernel.

\begin{lemma}[Identification with the classical $\Theta$-kernel {\cite[Lemma~Y]{PaperI}}]
\label{lem:arch-equals-Theta}
Assume the self-dual scale $L^2=4\pi D$. Then for all $t>0$,
\[
\widetilde K_{\mathrm{arch}}(t)=\Theta(t),
\]
where
\[
\Theta(t)=\sum_{n=1}^{\infty}\Bigl(2\pi^2 n^4 t^{3/2}-3\pi n^2 t^{1/2}\Bigr)e^{-\pi n^2 t}.
\]
\end{lemma}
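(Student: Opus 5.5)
The plan is a direct termwise computation. At the self-dual scale $L^2=4\pi D$ the rescaling \eqref{eq:tprime} is the identity, $t'=t$, so Theorem~\ref{thm:theta-series-form-recall} gives $K_L(t)=\vartheta(t)=\sum_{n\in\Z}e^{-\pi n^2 t}$. Separating the $n=0$ term and pairing $\pm n$ gives
\[
K_L(t)-1 = 2\sum_{n=1}^{\infty} e^{-\pi n^2 t},\qquad t>0.
\]
This removes the zero mode exactly. The diffusive piece $t^{-1/2}$ is irrelevant here because we apply $\mathcal A$ to $K_L-1$ directly. In any case Lemma~\ref{lem:A-kills-singular} shows that $\mathcal A(t^{-1/2})=0$, so the result would be unchanged if we completed by $\widetilde K^\star$ instead.

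Next we compute $\mathcal A$ on a single exponential $e^{-at}$ with $a=\pi n^2$:
\[
\frac{d}{dt}e^{-at}=-a e^{-at},
\qquad
t^{3/2}\frac{d}{dt}e^{-at}=-a\,t^{3/2}e^{-at}.
\]
Differentiating once more gives
\[
\mathcal A(e^{-at})=\bigl(a^2 t^{3/2}-\tfrac32 a\,t^{1/2}\bigr)e^{-at}.
\]
Multiplying by $2$ and substituting $a=\pi n^2$ yields $\bigl(2\pi^2 n^4 t^{3/2}-3\pi n^2 t^{1/2}\bigr)e^{-\pi n^2 t}$, which is exactly the $n$-th term of $\Theta(t)$.

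The only step needing care, and the one I expect to be the sole real obstacle, is justifying that $\mathcal A$ may be applied term by term. For this I would fix a compact interval $[t_0,t_1]\subset(0,\infty)$. On it, the series of first and second derivatives are dominated by $\sum_n C(1+n^4)e^{-\pi n^2 t_0}$, which converges. The Weierstrass M-test then gives uniform convergence of the differentiated series on $[t_0,t_1]$, and the standard theorem on termwise differentiation applies twice. This is the first-derivative step for $f=K_L-1$, followed by the same argument for $t^{3/2}f'$, whose termwise series is again dominated by a similar majorant. These are precisely the domination estimates collected in Appendix~\ref{app:technical}. Since $t_0,t_1$ are arbitrary, the identity $\widetilde K_{\mathrm{arch}}(t)=\Theta(t)$ holds for all $t>0$.
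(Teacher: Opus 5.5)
Your proposal is correct and is essentially the paper's argument: at $L^2=4\pi D$ one has $t'=t$, so $K_L-1=2\sum_{n\ge1}e^{-\pi n^2 t}$, and applying $\mathcal A$ termwise (your computation $\mathcal A(e^{-at})=\bigl(a^2t^{3/2}-\tfrac32 a\,t^{1/2}\bigr)e^{-at}$ with $a=\pi n^2$ is right) gives $\Theta$. Your Weierstrass M-test justification on compact subintervals of $(0,\infty)$ supplies the uniformity that the paper defers to Paper~I and to the theta-series domination lemma in Appendix~\ref{app:technical}.
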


\begin{remark}
The proof is a termwise differentiation of the absolutely convergent series for $K_L(t)$ after
imposing $L^2=4\pi D$; see \cite{PaperI} for details and uniformity justifications.
\end{remark}

\begin{lemma}[Rapid decay of the completed $\Theta$-kernel]\label{lem:Theta-rapid-decay}
Assume the self-dual scale $L^2=4\pi D$ so that $\widetilde K_{\mathrm{arch}}=\Theta$.
Then $\widetilde K_{\mathrm{arch}}(t)$ decays rapidly as $t\to 0^+$ and as $t\to\infty$.
In particular, for every $A>0$ there exists $C_A>0$ such that
\[
|\widetilde K_{\mathrm{arch}}(t)|\le C_A\,t^{A}\quad (0<t\le 1),
\qquad
|\widetilde K_{\mathrm{arch}}(t)|\le C_A\,t^{-A}\quad (t\ge 1).
\]
Consequently, the Mellin integrals defining $F_{\mathrm{arch}}(z)$ are absolutely convergent and
integration by parts produces no boundary terms on strict vertical strips.
\end{lemma}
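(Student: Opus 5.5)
We prove the lemma by direct estimation of the explicit series for $\Theta$ from Lemma~\ref{lem:arch-equals-Theta}, treating the two ends separately. The end $t\to 0^+$ cannot be handled termwise, so there we use Jacobi inversion.

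\emph{The end $t\ge 1$.} For $n\ge1$ and $t\ge1$ we have $e^{-\pi n^2 t}\le e^{-\pi t}\,e^{-\pi(n^2-1)}$. Hence
\[
|\Theta(t)|\le \bigl(2\pi^2 t^{3/2}+3\pi t^{1/2}\bigr)e^{-\pi t}\sum_{n\ge1} n^4 e^{-\pi(n^2-1)}\ll t^{3/2}e^{-\pi t}.
\]
This is $\le C_A t^{-A}$ for every $A>0$, which gives exponential (in particular, faster than any power) decay at infinity.

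\emph{The end $0<t\le 1$.} Termwise bounds fail here, because individual terms are not small and the sum only decays through cancellation. Instead we use the differential-operator description $\Theta=\mathcal A(K_L-1)$ at the self-dual scale, with $K_L=\vartheta$. By \eqref{eq:KL-selfdual},
\[
\vartheta(t)-1=t^{-1/2}\bigl(\vartheta(t^{-1})-1\bigr)+t^{-1/2}-1 .
\]
Apply $\mathcal A$. By Lemma~\ref{lem:A-kills-singular} the terms $t^{-1/2}-1$ are annihilated, so
\[
\Theta(t)=\mathcal A\bigl[t^{-1/2}\,G(t^{-1})\bigr],\qquad G(u):=\vartheta(u)-1=2\sum_{n\ge1}e^{-\pi n^2u}.
\]
Expand by the chain rule. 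Each derivative of $G(u)$ at $u=t^{-1}$ is bounded by $C_k\,e^{-\pi/t}$ uniformly for $t\le1$, since the series for $G^{(k)}$ is dominated by its first term for $u\ge1$. Each derivative of $t^{-1}$ or of $t^{-1/2}$ contributes at most a negative power of $t$. Hence $|\Theta(t)|\le C\,t^{-m}e^{-\pi/t}$ for some fixed $m$, and this is $\le C_A t^A$ for every $A$. The termwise differentiation needed here is justified by uniform convergence of the differentiated series on $[1,\infty)$; this is the same justification as in Lemma~\ref{lem:arch-equals-Theta}.

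\emph{Consequences.} The bounds $|\Theta(t)|\le C_A t^{A}$ near $0$ and $\le C_A t^{-A}$ near $\infty$ make $\int_0^\infty |\Theta(t)|\,t^{\Re z-1}\,dt$ finite for every $z\in\C$, locally uniformly in $z$. So $F_{\mathrm{arch}}(z)$ converges absolutely and is entire. For integration by parts, the boundary terms have the form $t^{z+k}\Theta^{(j)}(t)$ evaluated at $0$ and $\infty$. Differentiating the two representations above shows that $\Theta^{(j)}$ obeys the same rapid-decay bounds, only with shifted powers. Therefore these boundary terms vanish on any strict vertical strip.

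The only real obstacle is the small-$t$ end, and the whole point there is to route through Jacobi inversion together with the annihilation property of $\mathcal A$, rather than attempting to bound the raw series.
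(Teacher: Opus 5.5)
Your proof is correct and follows the paper's route. Both use termwise exponential decay of the series for $t\ge 1$, and Jacobi inversion for $0<t\le 1$ to rewrite $\Theta(t)$ in terms of $e^{-\pi n^2/t}$; your chain-rule computation gives explicitly $\Theta(t)=\sum_{n\ge1}(2\pi^2 n^4 t^{-3}-3\pi n^2 t^{-2})e^{-\pi n^2/t}$. Your use of Lemma~\ref{lem:A-kills-singular} to discard the non-decaying pieces $t^{-1/2}-1$ produced by inversion makes explicit a step the paper's proof leaves implicit, and your observation that the raw series cannot be bounded termwise near $0$ is precisely why that step is needed.
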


\begin{proof}
By Lemma~\ref{lem:arch-equals-Theta}, $\widetilde K_{\mathrm{arch}}(t)$ is given by an absolutely
convergent series whose $n$th term is a finite linear combination of $t^{1/2}e^{-\pi n^2 t}$ and
$t^{3/2}e^{-\pi n^2 t}$. For $t\ge 1$ these terms decay exponentially in $t$, uniformly in $n$, hence
faster than any power. For $0<t\le 1$, apply Jacobi inversion to the underlying theta series (or,
equivalently, use the standard modular relation for $\vartheta(t)$) to rewrite $\Theta(t)$ as a
rapidly decaying combination of terms $t^{-1/2}e^{-\pi n^2/t}$ and their $t$-derivatives; since $n\ge1$
the factor $e^{-\pi n^2/t}$ yields decay faster than any power as $t\to0^+$. The stated bounds follow
by termwise estimation of the absolutely convergent series.
\end{proof}

\subsection{Mellin transform and the Riemann $\Xi$-function}

The point of introducing $\widetilde K_{\mathrm{arch}}$ is that its Mellin transform is the classical
completed zeta function.

\begin{definition}[Completed zeta functions]
\label{def:xi-Xi}
Define
\[
\xi(w)
:=
\frac12\,w(w-1)\,\pi^{-w/2}\Gamma\!\Bigl(\frac{w}{2}\Bigr)\zeta(w),
\qquad
\Xi(z):=\xi\!\Bigl(\tfrac12+iz\Bigr).
\]
\end{definition}

\begin{theorem}[Archimedean Mellin identification {\cite[Theorem~Z]{PaperI}}]
\label{thm:arch-mellin-identification}
Assume the self-dual scale $L^2=4\pi D$ so that $\widetilde K_{\mathrm{arch}}=\Theta$.
Define, for $z\in\C$,
\begin{equation}
\label{eq:Farch-def}
F_{\mathrm{arch}}(z)
:=
\int_0^\infty \widetilde K_{\mathrm{arch}}(t)\,t^{\frac34+iz}\,\frac{dt}{t}.
\end{equation}
Then for all $z\in\C$,
\begin{equation}
\label{eq:Farch-equals-Xi}
F_{\mathrm{arch}}(z)=\Xi(2z).
\end{equation}
\end{theorem}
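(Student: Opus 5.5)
The plan is to compute the Mellin transform of $\widetilde K_{\mathrm{arch}}=\Theta=\mathcal A(\vartheta-1)$ directly by integrating by parts twice, which moves the operator $\mathcal A$ onto the Mellin weight. This reduces everything to the classical Riemann integral $\int_0^\infty \omega(t)t^{u-1}\,dt=\pi^{-u}\Gamma(u)\zeta(2u)$, where $\omega(t):=\sum_{n\ge1}e^{-\pi n^2t}$, so that $\vartheta-1=2\omega$. The computation is first done in a half-plane where all boundary terms visibly vanish. The identity is then extended to all $z\in\C$ by analytic continuation.

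\textbf{Step 1 (half-plane computation).} Write $s:=\tfrac34+iz$ and $f:=K_L-1=2\omega$; the last equality uses the self-dual scale, under which $K_L=\vartheta$ in the variable $t$ by Lemma~\ref{lem:forced-selfdual}.

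\emph{Growth of $f$.} As $t\to\infty$, $f$ and $f'$ decay exponentially. As $t\to0^+$, Jacobi inversion \eqref{eq:jacobi} gives $f(t)=t^{-1/2}-1+O(t^{-1/2}e^{-\pi/t})$, and correspondingly $f'(t)=O(t^{-3/2})$.

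\emph{Integration by parts.} Restrict to $\Re s>1$. Then
\[
\int_0^\infty (t^{3/2}f')'\,t^{s-1}\,dt=-(s-1)\int_0^\infty f'(t)\,t^{s-1/2}\,dt=(s-1)(s-\tfrac12)\int_0^\infty f(t)\,t^{s-3/2}\,dt.
\]
The boundary terms $t^{3/2}f'(t)\,t^{s-1}$ and $f(t)\,t^{s-1/2}$ are both $O(t^{\Re s-1})$ at $0$, so they vanish there when $\Re s>1$. They vanish at $\infty$ by exponential decay. The final integral converges absolutely for the same reason.

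\emph{Reduction to $\xi$.} Set $u:=s-\tfrac12$, so that $\Re u>\tfrac12$. Riemann's integral gives
\[
F_{\mathrm{arch}}(z)=2(s-1)(s-\tfrac12)\,\pi^{-u}\Gamma(u)\zeta(2u).
\]
Now put $w:=2u=\tfrac12+2iz$. Then $(s-1)(s-\tfrac12)=u(u-\tfrac12)=\tfrac14 w(w-1)$, so the right side equals $\tfrac12 w(w-1)\pi^{-w/2}\Gamma(w/2)\zeta(w)=\xi(w)=\Xi(2z)$ by Definition~\ref{def:xi-Xi}.

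\textbf{Step 2 (continuation).} By Lemma~\ref{lem:Theta-rapid-decay}, the integral \eqref{eq:Farch-def} converges absolutely and locally uniformly in $z$ on all of $\C$. Hence $F_{\mathrm{arch}}$ is entire, by Morera's theorem together with differentiation under the integral sign. Since $\Xi(2z)$ is also entire and the two agree on the half-plane $\Re s>1$ (a nonempty open set in $z$), the identity theorem yields \eqref{eq:Farch-equals-Xi} for all $z\in\C$.

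The main point needing care is Step 1's boundary bookkeeping at $t\to0^+$. The function $f=K_L-1$ itself is not rapidly decaying there, since it carries the $t^{-1/2}$ singularity; the rapid decay of Lemma~\ref{lem:Theta-rapid-decay} holds only after $\mathcal A$ has been applied. One therefore cannot integrate by parts on an arbitrary strip, and must first restrict to $\Re s>1$ using the explicit asymptotics of $f$ and $f'$ from Jacobi inversion. Equivalently, Lemma~\ref{lem:A-kills-singular} shows that $\mathcal A$ annihilates exactly the terms $1$ and $t^{-1/2}$ that would otherwise obstruct convergence, which is what makes the left side entire. A secondary check is justifying the termwise application of $\mathcal A$ and the term-by-term Mellin integration of $\omega$; both follow from dominated convergence for $\Re u>\tfrac12$, as recorded in Appendix~\ref{app:technical}.
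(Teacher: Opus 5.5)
Your proof is correct, and it is essentially the mechanism the paper relies on. The paper imports this theorem from Paper~I rather than proving it here. Its integration-by-parts identity \eqref{eq:mellin-A-identity}, which moves $\mathcal A$ onto the Mellin weight, followed by Riemann's integral $\int_0^\infty\omega(t)t^{u-1}\,dt=\pi^{-u}\Gamma(u)\zeta(2u)$, is exactly your Step~1. Your bookkeeping $(s-1)(s-\tfrac12)=\tfrac14 w(w-1)$ with $w=\tfrac12+2iz$ correctly yields $\xi(w)=\Xi(2z)$.

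You integrate by parts only where $\Re s>1$ (i.e.\ $\Im z<-\tfrac14$), where the boundary terms coming from $K_L-1\sim t^{-1/2}$ genuinely vanish, and then continue via entireness of $F_{\mathrm{arch}}$. That is the right way to run the argument. The paper states \eqref{eq:mellin-A-identity} under a rapid-decay-at-$0$ hypothesis that $K_L-1$ does not satisfy.
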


\begin{remark}[How this will be used]
Theorem~\ref{thm:arch-mellin-identification} produces the Mellin-side analytic object to be compared
later with the Laplace-side canonical factorization coming from spectral approximation.  The remainder of
this paper does not revisit the derivation of \eqref{eq:Farch-equals-Xi}; instead we use it as a
black box input while developing a strip comparison principle.
\end{remark}

\section{Cycle spectral determinants and the reference family}\label{sec:spectral-ref}
In the revised Paper~I we do \emph{not} invoke P\'olya-frequency rigidity.  Instead, from the same
primitive Markovian input (the reversible random walk on the size--$N$ cycle) we extract a
self-adjoint cycle operator $L_N$ (e.g.\ the unnormalized Laplacian) and use its spectrum as a
rigid reference for later divisor comparisons.

\subsection{Cycle spectrum}
For the unnormalized Laplacian on the cycle one has the explicit eigenvalues
\[
\lambda_k^{(N)} = 2-2\cos\!\Big(\frac{2\pi k}{N}\Big)\in[0,4],\qquad k=0,1,\dots,N-1.
\]
In particular $\mathrm{spec}(L_N)\subset\R$.

\subsection{Rescaled spectral map and determinant}
Following Paper~I, introduce the rescaled spectral map
\[
\widetilde q_N(w):=\Big(\frac{N}{2\pi}\Big)^2\Big(2-2\cos\!\Big(\frac{2\pi w}{N}\Big)\Big),
\]
so that $\widetilde q_N(w)\to w^2$ locally uniformly on $\C$ as $N\to\infty$.  Define the spectral
determinant reference function
\[
P_N(w):=\det\!\big(\widetilde q_N(w)I-L_N\big)=\prod_{k=0}^{N-1}\big(\widetilde q_N(w)-\lambda_k^{(N)}\big).
\]
Since $L_N$ is self-adjoint, the zeros of $P_N$ are real in the axis variable $w$ (up to the real
preimages under $\widetilde q_N$).

\subsection{How Paper II uses $P_N$}
In what follows $P_N$ plays the role of a real-entire reference family against which we compare
$X(w):=\Xi(2w)$ on horizontal strips.  The bridge mechanism is purely analytic: under boundary
separation on strip rectangles (Hypothesis~\ref{hyp:seam-holomorphy}) and a bounded-type/rigidity
principle (Hypothesis~\ref{hyp:sector-control}), the normalized seam ratio becomes a strip-unit, forcing
divisor identification on overlap strips.  The verification of the required boundary bounds and the
choice of an admissible regime $N=N(T)$ are carried out in Paper~III.

\section{Left-strip continuation via modular splitting}
\label{sec:lb-continuation}

In this section we extend the bilateral Laplace transform of the centered kernel
\[
\mathcal B\Phi^{\star}(s)=\int_{\R}\Phi^{\star}(x)e^{-sx}\,dx,
\]
initially defined on the strip $-\tfrac14<\Re(s)<\tfrac34$
(Lemma~\cite{PaperI}), to a larger strip on the left.
The continuation uses only the explicit theta-series form of $K_L$ together with Jacobi inversion,
the exact centering built into $\Phi^{\star}$, and absolute convergence of explicitly split integrals.
No Mellin transforms, factorization results, or properties of $\Xi$ enter here.

\subsection{Modular splitting of the Laplace integral}

Fix $s\in\C$ and split the defining integral at the origin:
\begin{equation}
\label{eq:split}
\mathcal B\Phi^{\star}(s)
=
\int_{0}^{\infty}\Phi^{\star}(x)e^{-sx}\,dx
+
\int_{-\infty}^{0}\Phi^{\star}(x)e^{-sx}\,dx.
\end{equation}
On the initial strip $-\tfrac14<\Re(s)<\tfrac34$ both integrals converge
absolutely by Lemma~\cite{PaperI}.

Changing variables $x\mapsto -x$ in the second integral in \eqref{eq:split} gives the elementary
split identity
\begin{equation}
\label{eq:modular-split}
\mathcal B\Phi^{\star}(s)
=
\int_{0}^{\infty}\Phi^{\star}(x)e^{-sx}\,dx
+
\int_{0}^{\infty}\Phi^{\star}(-x)e^{s x}\,dx.
\end{equation}
To continue $\mathcal B\Phi^{\star}$ to the left, we rewrite the second term on a left strip using
the twisted symmetry of the centered kernel (proved in \cite{PaperI}) together with absolute
convergence of the resulting split integrals.

\subsection{Absolute convergence on the left strip}

We now record the absolute convergence needed for the left-strip defining representation.

\begin{lemma}[Left-strip convergence]
\label{lem:left-strip-convergence}
Each integral in
\begin{equation}\label{eq:lb-defining}
\mathcal B_{\mathrm{LB}}\Phi^{\star}(s)
:=
\int_{0}^{\infty}\Phi^{\star}(x)e^{-sx}\,dx
+
\int_{0}^{\infty}\Phi^{\star}(x)e^{-(\frac12-s)x}\,dx
\end{equation}
converges absolutely provided
\[
-\tfrac12<\Re(s)<\tfrac14.
\]
\end{lemma}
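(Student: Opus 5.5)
The plan is to bound each of the two integrals in \eqref{eq:lb-defining} separately, using only the behaviour of $\Phi^{\star}$ on $[0,\infty)$, i.e.\ as $x\to+\infty$. Both integrals run over $x\ge 0$ and $\Phi^{\star}$ is locally integrable. So absolute convergence reduces to one statement: $\Phi^{\star}(x)e^{-\sigma x}$ is integrable near $+\infty$ for the relevant real parts $\sigma$.

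\textbf{Step 1: decay of $\Phi^{\star}$ on the right.} The initial strip statement (Lemma~\cite{PaperI}) says $\int_0^\infty|\Phi^{\star}(x)|e^{-\sigma x}\,dx<\infty$ for $-\tfrac14<\sigma<\tfrac34$. What we need, though, is the sharp exponential rate of $\Phi^{\star}(x)$ as $x\to+\infty$. For this I would go back to the explicit origin of $\Phi$ as a reparametrization $t=e^{x}$ (up to the self-dual normalization) of the half-density kernel $\widetilde K_{\mathrm{sym}}$, together with Theorem~\ref{thm:theta-series-form-recall}.

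For $t\to\infty$ the theta series satisfies $K_L(t)-1=O(e^{-ct})$. The term $e^{-x/4}$ subtracted in $\Phi^{\star}=\Phi-e^{-x/4}$ exactly cancels the constant (zero-mode) contribution transported through the half-density factor $t^{-1/2}$. Only two pieces then remain: the diffusive term $t^{-1/2}\cdot t^{-1/2}$-type contribution and exponentially small terms. I would write this out explicitly, and I expect it to give
\[
|\Phi^{\star}(x)|\le C e^{-x/4}\qquad (x\ge 0).
\]
This is the rate implicit in the endpoint $\Re s=-\tfrac14$ of the initial strip.

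\textbf{Step 2: first integral.} Given the bound $|\Phi^{\star}(x)|\le Ce^{-x/4}$, we have $|\Phi^{\star}(x)e^{-sx}|\le Ce^{-(\frac14+\Re s)x}$. This is integrable on $[0,\infty)$ exactly when $\Re s>-\tfrac14$, which is weaker than needed on the left. Equivalently, the first integral already converges wherever $\Re s>-\tfrac14$, via the initial strip.

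\textbf{Step 3: second integral.} Here $|e^{-(\frac12-s)x}|=e^{-(\frac12-\Re s)x}$, so the integrand is bounded by $Ce^{-(\frac34-\Re s)x}$, which is integrable when $\Re s<\tfrac34$. It therefore converges for $\Re s<\tfrac14$ a fortiori. Equivalently, the exponent $\tfrac12-s$ lies in the initial strip $(-\tfrac14,\tfrac34)$ precisely when $-\tfrac14<\Re s<\tfrac34$, so the initial-strip lemma applies directly with $s$ replaced by $\tfrac12-s$.

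\textbf{Main obstacle.} The difficulty is reconciling the two steps with the stated range $-\tfrac12<\Re s<\tfrac14$. The naive bound covers only $-\tfrac14<\Re s<\tfrac14$ for the first integral. To reach down to $\Re s>-\tfrac12$ one needs the \emph{improved} decay $\Phi^{\star}(x)=O(e^{-x/2})$ as $x\to+\infty$, coming from the exact centering: the subtraction of $e^{-x/4}$ should cancel the leading term completely, not merely bound it.

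I would therefore spend most of the effort on Step 1. The aim is to show, via the theta expansion and Jacobi inversion (Lemma~\ref{lem:forced-selfdual}), that after centering the leading surviving term on the right is of order $e^{-x/2}$ (the $t^{-1}$ diffusive piece in half-density form), with an exponentially small remainder. With that in hand, the first integral converges for $\Re s>-\tfrac12$, the second for $\Re s<\tfrac14$ (using that $\tfrac12-s$ then has real part above $\tfrac14$, well inside the domain of the improved bound), and the intersection is the claimed strip.
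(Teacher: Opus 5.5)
Your plan is correct and is essentially the paper's argument: bound $|\Phi^{\star}|$ on $[0,\infty)$ by an exponential and integrate each term directly. The only difference is that the paper imports the tail bound $|\Phi^{\star}(x)|\le Ce^{-3x/4}$ ($x\ge0$) from Paper~I, whereas you propose to derive a decay bound from the explicit theta representation.

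Your rate bookkeeping is off, though. From $\Phi^{\star}(x)=e^{-x/4}\bigl(K_L(e^x)-1-e^{-x/2}\bigr)$, the centering removes the zero mode and the diffusive piece contributes exactly $-e^{-3x/4}$, not $e^{-x/4}$ and not a leading term of order $e^{-x/2}$. The theta tail $2e^{-x/4}\sum_{n\ge1}e^{-\pi n^2e^x}$ is superexponentially small, with no Jacobi inversion needed on the right. So the sharp bound is $Ce^{-3x/4}$, your ``main obstacle'' disappears, and your target $O(e^{-x/2})$ is a true but weaker bound that still suffices.
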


\begin{proof}
We use the tail bounds from Lemma~\cite{PaperI}.

\smallskip
\noindent\textbf{First integral.}
For $x\ge0$, one has $|\Phi^{\star}(x)|\le C e^{-3x/4}$. Thus
\[
\int_{0}^{\infty}\bigl|\Phi^{\star}(x)e^{-sx}\bigr|\,dx
\le
C\int_{0}^{\infty}e^{-(\Re(s)+3/4)x}\,dx,
\]
which converges whenever $\Re(s)>-\tfrac34$, in particular for all
$\Re(s)>-\tfrac12$.

\smallskip
\noindent\textbf{Second integral.}
Again using $|\Phi^{\star}(x)|\le C e^{-3x/4}$ for $x\ge0$, we estimate
\[
\int_{0}^{\infty}\bigl|\Phi^{\star}(x)e^{-(\frac12-s)x}\bigr|\,dx
\le
C\int_{0}^{\infty}e^{-(\frac54-\Re(s))x}\,dx,
\]
which converges for all $\Re(s)<\tfrac54$, and in particular for
$\Re(s)<\tfrac14$.

Combining the two bounds gives the stated strip.
\end{proof}
\subsection{Definition and agreement on the overlap}

Lemma~\ref{lem:left-strip-convergence} allows us to use the left-strip split
\eqref{eq:lb-defining} as a definition.

\begin{definition}[Left-strip continuation]
\label{def:left-strip}
For $s\in\C$ with $-\tfrac12<\Re(s)<\tfrac14$, define
\begin{equation}
\label{eq:left-strip-def}
\mathcal B_{\mathrm{LB}}\Phi^{\star}(s)
:=
\int_{0}^{\infty}\Phi^{\star}(x)e^{-sx}\,dx
+
\int_{0}^{\infty}\Phi^{\star}(x)e^{-(\frac12-s)x}\,dx.
\end{equation}
\end{definition}

\begin{lemma}[Consistency on the overlap]
\label{lem:overlap}
On the intersection strip $-\tfrac14<\Re(s)<\tfrac14$, one has
\[
\mathcal B_{\mathrm{LB}}\Phi^{\star}(s)=\mathcal B\Phi^{\star}(s).
\]
\end{lemma}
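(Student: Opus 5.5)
Both sides of the identity are given by absolutely convergent integrals on $-\tfrac14<\Re(s)<\tfrac14$: the left side by Lemma~\ref{lem:left-strip-convergence}, and the right side by the convergence on the initial strip $-\tfrac14<\Re(s)<\tfrac34$ recalled from \cite{PaperI}. It therefore suffices to compare the two split representations term by term. By \eqref{eq:modular-split}, $\mathcal B\Phi^{\star}(s)$ and $\mathcal B_{\mathrm{LB}}\Phi^{\star}(s)$ have the same first term $\int_0^\infty\Phi^{\star}(x)e^{-sx}\,dx$. The claim thus reduces to the identity
\[
\int_{0}^{\infty}\Phi^{\star}(-x)e^{sx}\,dx=\int_{0}^{\infty}\Phi^{\star}(x)e^{-(\frac12-s)x}\,dx,
\qquad -\tfrac14<\Re(s)<\tfrac14 .
\]

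This identity follows pointwise from the twisted symmetry of the centered kernel proved in \cite{PaperI}. That symmetry is the Jacobi inversion \eqref{eq:KL-selfdual} at the self-dual scale $L^2=4\pi D$, transported to the logarithmic variable $x$ through the half-density normalization of Definition~\ref{def:half-density}, with the exact centering (subtracting $1$ and $e^{-x/4}$) removing the singular pieces. I expect it to take the form
\[
\Phi^{\star}(-x)=e^{-x/2}\,\Phi^{\star}(x),\qquad x\in\R .
\]
Substituting this into the left side of the reduced identity gives $\int_0^\infty \Phi^{\star}(x)e^{-x/2}e^{sx}\,dx$, which is exactly the right side. Since the integrand identity holds for every $x\ge0$, no analytic continuation argument is needed.

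As a cross-check, both functions are holomorphic on the overlap strip, since absolutely convergent Laplace integrals with locally uniform domination define holomorphic functions. One could therefore verify the identity on a small real interval and conclude by the identity theorem. The pointwise argument makes this unnecessary.

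The main obstacle is pinning down the exact form of the twisted symmetry: the precise exponential weight ($e^{-x/2}$ versus some other factor) and the fact that the centering term $e^{-x/4}$ is itself compatible with it. Note that $e^{x/4}=e^{-x/2}e^{-x/4}\cdot e^{x}$ fails, so the centering must be checked against the exact normalization conventions of \cite{PaperI}. I would first derive the symmetry directly. Write $\Phi$ in terms of $\widetilde K_{\mathrm{sym}}(e^{x})$ or a similar substitution, apply \eqref{eq:KL-selfdual}, and track how the subtracted terms $1$ and $L/\sqrt{4\pi Dt}$ exchange under $t\mapsto t^{-1}$. At the self-dual scale $L/\sqrt{4\pi D}=1$, so these two terms swap: $1\leftrightarrow t^{-1/2}$. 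Their combined subtraction is therefore inversion-covariant, and this is precisely what should make the centered kernel satisfy the twisted symmetry exactly, with weight $\tfrac12$ matching the shift $s\mapsto\tfrac12-s$ in \eqref{eq:left-strip-def}.
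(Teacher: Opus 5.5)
Your argument is the paper's proof: split the absolutely convergent bilateral integral at $0$, reflect the negative half-line, and apply the twisted symmetry $\Phi^{\star}(-x)=e^{-x/2}\Phi^{\star}(x)$ quoted from Paper~I to turn $\int_0^\infty\Phi^{\star}(-x)e^{sx}\,dx$ into $\int_0^\infty\Phi^{\star}(x)e^{-(\frac12-s)x}\,dx$. Two cautions on your side remarks: the displayed relation $e^{x/4}=e^{-x/2}e^{-x/4}\cdot e^{x}$ is actually an identity (you mean $e^{x/4}\neq e^{-x/2}e^{-x/4}$), and the direct derivation you propose would not confirm the weight $e^{-x/2}$ if run with the relation $\Phi^{\star}(x)=e^{-x/4}\widetilde K^{\star}(e^{x})$ from the proof of Lemma~\ref{lem:boundary-Theta}, since $\widetilde K^{\star}(1/t)=t^{1/2}\widetilde K^{\star}(t)$ then gives $\Phi^{\star}(-x)=e^{x}\Phi^{\star}(x)$, so, like the paper, you must take the symmetry and its normalization from Paper~I as given.
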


\begin{proof}
Fix $s$ with $-\tfrac14<\Re(s)<\tfrac14$. The defining bilateral Laplace integral
for $\mathcal B\Phi^{\star}(s)=\int_{\R}\Phi^{\star}(x)e^{-sx}\,dx$ converges absolutely,
so we may split at $0$ and change variables on the negative half-line:
\[
\mathcal B\Phi^{\star}(s)
=
\int_{0}^{\infty}\Phi^{\star}(x)e^{-sx}\,dx
+
\int_{0}^{\infty}\Phi^{\star}(-x)e^{sx}\,dx.
\]
Using the twisted symmetry $\Phi^{\star}(-x)=e^{-x/2}\Phi^{\star}(x)$ (proved in \cite{PaperI}),
the second term becomes $\int_{0}^{\infty}\Phi^{\star}(x)e^{-(\frac12-s)x}\,dx$.
This is exactly \eqref{eq:left-strip-def}.
\end{proof}

\begin{proposition}[Analytic continuation to the left strip]
\label{prop:left-strip-continuation}
The function $\mathcal B\Phi^{\star}$ admits an analytic continuation from
$-\tfrac14<\Re(s)<\tfrac34$ to the larger strip
\[
-\tfrac12<\Re(s)<\tfrac14,
\]
given by $\mathcal B_{\mathrm{LB}}\Phi^{\star}(s)$.
\end{proposition}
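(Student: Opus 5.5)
The plan is the standard gluing argument for analytic continuation: show that $\mathcal B_{\mathrm{LB}}\Phi^{\star}$ is holomorphic on $-\tfrac12<\Re(s)<\tfrac14$, note that it agrees with $\mathcal B\Phi^{\star}$ on a nonempty open set, and then define the continuation piecewise on the union of the two strips.

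\textbf{Holomorphy of each piece.} Both $\mathcal B\Phi^{\star}$ on $-\tfrac14<\Re(s)<\tfrac34$ and $\mathcal B_{\mathrm{LB}}\Phi^{\star}$ on $-\tfrac12<\Re(s)<\tfrac14$ should be holomorphic. For each half-line integral $\int_0^\infty \Phi^{\star}(x)e^{-sx}\,dx$ and $\int_0^\infty\Phi^{\star}(x)e^{-(\frac12-s)x}\,dx$, I would use the tail bound $|\Phi^{\star}(x)|\le Ce^{-3x/4}$ on $x\ge0$ from the proof of Lemma~\ref{lem:left-strip-convergence}. On any compact $K$ inside the strip, $\Re(s)$ ranges over $[a,b]$ with $a>-\tfrac12$ and $b<\tfrac14$. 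The integrands are then dominated by $C e^{-(a+3/4)x}$ and $Ce^{-(5/4-b)x}$ respectively, and both are integrable. Each integrand is entire in $s$ for fixed $x$. Morera's theorem combined with Fubini, or differentiation under the integral by dominated convergence, then gives holomorphy. The same argument on $-\tfrac14<\Re(s)<\tfrac34$, using the known absolute convergence of the bilateral integral, handles $\mathcal B\Phi^{\star}$.

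\textbf{Gluing.} Define $G(s)=\mathcal B\Phi^{\star}(s)$ for $-\tfrac14<\Re(s)<\tfrac34$ and $G(s)=\mathcal B_{\mathrm{LB}}\Phi^{\star}(s)$ for $-\tfrac12<\Re(s)<\tfrac14$. Lemma~\ref{lem:overlap} says the two definitions coincide on the overlap $-\tfrac14<\Re(s)<\tfrac14$, so $G$ is well defined. It is holomorphic on the union $-\tfrac12<\Re(s)<\tfrac34$, since holomorphy is local and each point has a neighborhood lying in one of the two strips. Restricting $G$ to $-\tfrac12<\Re(s)<\tfrac14$ gives the claimed continuation, and there it equals $\mathcal B_{\mathrm{LB}}\Phi^{\star}$. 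Uniqueness of the continuation follows from the identity theorem, because the overlap is a connected open set.

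\textbf{Main obstacle.} The analytic steps are routine. The substantive content is imported: the tail bound $|\Phi^{\star}(x)|\le Ce^{-3x/4}$ for $x\ge 0$ and the twisted symmetry $\Phi^{\star}(-x)=e^{-x/2}\Phi^{\star}(x)$ behind Lemma~\ref{lem:overlap}, both from \cite{PaperI}. The one point I would check carefully is that the tail bound holds uniformly on all of $[0,\infty)$, not only for large $x$. If it holds only for $x\ge x_0$, I would handle $[0,x_0]$ separately by local integrability, using $\Phi^{\star}\in L^1_{\mathrm{loc}}$, which follows from $\Phi\in L^1(\R)$. On that bounded interval, domination by $\sup_{s\in K}e^{|\Re s|x_0}|\Phi^{\star}(x)|$ suffices.
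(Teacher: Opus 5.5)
Your proposal is correct relative to the paper's preceding lemmas and follows the same route as the paper's proof. You get holomorphy of the two half-line integrals by dominated convergence on compact subsets, using the bound from Lemma~\ref{lem:left-strip-convergence}, and then glue to $\mathcal B\Phi^{\star}$ through Lemma~\ref{lem:overlap}; your explicit dominating exponentials and the glued function on the union strip just spell out what the paper leaves implicit.

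You single out the imported inputs as the real content, and the one to check is the twisted symmetry rather than the tail bound. With the paper's own formula $\Phi^{\star}(x)=e^{-x/4}\widetilde K^{\star}(e^{x})$ at the self-dual scale, Jacobi inversion gives $\widetilde K^{\star}(1/t)=t^{1/2}\widetilde K^{\star}(t)$, hence $\Phi^{\star}(-x)=e^{x}\Phi^{\star}(x)$, not $e^{-x/2}\Phi^{\star}(x)$. So $\Phi^{\star}$ grows like $e^{|x|/4}$ as $x\to-\infty$, and Lemma~\ref{lem:overlap}, on which both your argument and the paper's rest, fails as stated.
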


\begin{proof}
Each integral in \eqref{eq:left-strip-def} depends analytically on $s$ by dominated
convergence on compact subsets of the left strip (Lemma~\ref{lem:left-strip-convergence}).
Agreement on the overlap follows from Lemma~\ref{lem:overlap}, so the two definitions glue to a
single holomorphic function.
\end{proof}

\begin{remark}[Scope of the continuation]
At this stage we have extended $\mathcal B\Phi^{\star}$ to the left strip using
only exact centering/twisted symmetry and absolute convergence. No boundary-value identities,
factorizations, or rigidity arguments are used here. These enter only in subsequent sections.
\end{remark}

\begin{remark}[Two potential failure modes]
The continuation above relies only on two inputs: the twisted symmetry
$\Phi^{\star}(-x)=e^{-x/2}\Phi^{\star}(x)$ and the one-sided decay of $\Phi^{\star}$ on $(0,\infty)$.
Accordingly, the only points at which the argument could fail are:
(i) an incorrect self-dual normalization leading to an inexact twisted symmetry, and
(ii) insufficient decay to justify the split integrals as absolutely convergent.
Both are verified explicitly in Section~\ref{sec:completion-selfdual} and in Appendix~\ref{app:technical},
and no other properties of $\Phi^{\star}$ are used here.
\end{remark}

\section{Boundary identity on the left line}
\label{sec:lb-identity}

In this section we evaluate the left-strip continuation of
$\mathcal B\Phi^{\star}(s)$ on the boundary line
\[
\Re(s)=-\tfrac12.
\]
At this stage the role of the Mellin transform is purely identificational:
we compare two explicit integral representations on a region where both
sides converge absolutely. No analytic continuation of $\zeta$ or $\Xi$
beyond their classical Mellin definitions is used.

\subsection{Evaluation of the continued Laplace transform on the boundary}

Recall from Definition~\ref{def:left-strip} that for
$-\tfrac12<\Re(s)<\tfrac14$ the continuation is given by
\[
\mathcal B_{\mathrm{LB}}\Phi^{\star}(s)
=
\int_{0}^{\infty}\Phi^{\star}(x)e^{-sx}\,dx
+
\int_{0}^{\infty}\Phi^{\star}(x)e^{-(\frac12-s)x}\,dx.
\]
We now specialize to the boundary line.

\begin{lemma}[Boundary-value formula]
\label{lem:boundary-formula}
For every $z\in\R$, the boundary value
$\mathcal B_{\mathrm{LB}}\Phi^{\star}(-\tfrac12+iz)$ exists and is given by
\begin{equation}
\label{eq:boundary-formula}
\mathcal B_{\mathrm{LB}}\Phi^{\star}\!\left(-\tfrac12+iz\right)
=
\int_{0}^{\infty}\Phi^{\star}(x)e^{(\frac12-iz)x}\,dx
+
\int_{0}^{\infty}\Phi^{\star}(x)e^{-(1-iz)x}\,dx.
\end{equation}
Both integrals converge absolutely.
\end{lemma}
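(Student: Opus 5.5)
The plan is to read the boundary value as the limit of $\mathcal B_{\mathrm{LB}}\Phi^{\star}(\sigma+iz)$ as $\sigma\downarrow-\tfrac12$. I will show that the two integrals in Definition~\ref{def:left-strip} converge absolutely and uniformly in $\sigma$ on a closed range that includes the endpoint $\sigma=-\tfrac12$, and then pass to the limit by dominated convergence. The only analytic input is the one-sided tail bound $|\Phi^{\star}(x)|\le Ce^{-3x/4}$ for $x\ge0$ from Paper~I. This is the same bound used in the proof of Lemma~\ref{lem:left-strip-convergence}. No Mellin or $\Xi$-side information is needed.

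\textbf{Step 1 (absolute convergence at the endpoint).} Put $s=-\tfrac12+iz$ with $z\in\R$. Then $e^{-sx}=e^{(\frac12-iz)x}$ and $\tfrac12-s=1-iz$, so the right-hand side of \eqref{eq:boundary-formula} is exactly the right-hand side of \eqref{eq:left-strip-def} evaluated at this $s$. For the first integral,
\[
\bigl|\Phi^{\star}(x)e^{(\frac12-iz)x}\bigr|\le Ce^{-3x/4}e^{x/2}=Ce^{-x/4},
\]
which is integrable on $(0,\infty)$. For the second integral,
\[
\bigl|\Phi^{\star}(x)e^{-(1-iz)x}\bigr|\le Ce^{-7x/4},
\]
which is also integrable. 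This proves the absolute-convergence claim.

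\textbf{Step 2 (existence of the boundary value).} Fix $\varepsilon\in(0,\tfrac14)$ and take $s=\sigma+iz$ with $-\tfrac12\le\sigma\le\tfrac14-\varepsilon$. For $x\ge0$ we have $|e^{-sx}|=e^{-\sigma x}\le e^{x/2}$, so the first integrand is dominated by $Ce^{-x/4}$, uniformly in $\sigma$. We also have $|e^{-(\frac12-s)x}|=e^{-(\frac12-\sigma)x}\le e^{-(\frac14+\varepsilon)x}$, so the second integrand is dominated by $Ce^{-(1+\varepsilon)x}$. Both dominators are independent of $\sigma$ and integrable, and both integrands converge pointwise as $\sigma\downarrow-\tfrac12$. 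Dominated convergence therefore gives
\[
\lim_{\sigma\downarrow-1/2}\mathcal B_{\mathrm{LB}}\Phi^{\star}(\sigma+iz)=\text{right-hand side of } \eqref{eq:boundary-formula}.
\]
The same domination shows that the limit is locally uniform in $z$ and continuous in $z$. In fact the tail bound makes both integrals holomorphic on $-\tfrac34<\Re(s)<\tfrac54$, so $\mathcal B_{\mathrm{LB}}\Phi^{\star}$ is holomorphic across the line $\Re(s)=-\tfrac12$ and the boundary value is simply its value there. I would record this stronger observation in a remark.

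\textbf{Main obstacle.} The computation itself is routine. The only point needing care is that the tail bound $|\Phi^{\star}(x)|\le Ce^{-3x/4}$ is stated by Paper~I on all of $[0,\infty)$, including a neighbourhood of $x=0$, with $\Phi^{\star}$ locally integrable there. If Paper~I gives this bound only for large $x$, I would split the integrals at $x=1$. On $[0,1]$, local integrability of $\Phi^{\star}$ (which follows from $\Phi\in L^1$ and the boundedness of $e^{-x/4}$) handles the finite piece, since the exponential factors are bounded on compact sets. The tail piece is handled exactly as in Steps~1 and~2.
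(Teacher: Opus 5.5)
Your proposal is correct and is essentially the paper's argument: the paper substitutes $s=-\tfrac12+iz$ into Definition~\ref{def:left-strip} and gets absolute convergence from the one-sided bound $|\Phi^{\star}(x)|\le Ce^{-3x/4}$ for $x\ge0$ used in Lemma~\ref{lem:left-strip-convergence}, exactly as in your Step~1 (that lemma's proof in fact covers $-\tfrac34<\Re(s)<\tfrac54$, so the endpoint is included). Your Step~2 is a correct addition that the paper leaves implicit: dominated convergence as $\sigma\downarrow-\tfrac12$, together with your remark that both integrals are holomorphic across $\Re(s)=-\tfrac12$, makes the phrase ``boundary value'' precise.
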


\begin{proof}
Substitute $s=-\tfrac12+iz$ into Definition~\ref{def:left-strip}. Absolute
convergence follows from Lemma~\ref{lem:left-strip-convergence}: the first
integral is controlled since $\Re(\tfrac12-iz)=\tfrac12$, and the second since
$\Re(1-iz)=1$. No cancellation is used.
\end{proof}

\subsection{Reduction to the Archimedean kernel}

We now express the right-hand side of \eqref{eq:boundary-formula} in terms of
the Archimedean-completed kernel from Section~\ref{sec:completion-selfdual}.

\begin{lemma}[Boundary identity with the Archimedean kernel]
\label{lem:boundary-Theta}
For every $z\in\R$,
\begin{equation}
\label{eq:boundary-Theta}
\mathcal B_{\mathrm{LB}}\Phi^{\star}\!\left(-\tfrac12+iz\right)
=
\frac{1}{2\pi\left(z^2+\tfrac1{16}\right)}
\int_{0}^{\infty}\widetilde K_{\mathrm{arch}}(t)\,
t^{\frac34+iz}\,\frac{dt}{t}.
\end{equation}
\end{lemma}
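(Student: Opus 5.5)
The plan is to convert both Laplace integrals in \eqref{eq:boundary-formula} into Mellin integrals in the variable $t$ of Section~\ref{sec:completion-selfdual}. Then I would reassemble them, using the twisted symmetry, into a single Mellin transform of a centered theta kernel. Finally I would move the operator $\mathcal A$ of Definition~\ref{def:arch-operator} across the Mellin pairing by integration by parts.

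Concretely, I would first recall from \cite{PaperI} how $\Phi$ is built from $\widetilde K_{\mathrm{sym}}$: via $t=e^{x}$ (or $e^{-x}$), with $\Phi^{\star}$ corresponding to the centered half-density kernel $\widetilde K^{\star}_{\mathrm{sym}}$ at the self-dual scale $L^2=4\pi D$. In the notation used below, $\widetilde K^{\star}(t)=\vartheta(t)-1-t^{-1/2}$. Substituting $x=\log t$, the first integral in \eqref{eq:boundary-formula} becomes an integral over $t\in(1,\infty)$ of $\widetilde K^{\star}(t)$ against a power $t^{a+iz}$. The second integral becomes an integral over the same range against $t^{b-iz}$.

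In the second integral I would then substitute $t\mapsto t^{-1}$ and use the twisted symmetry $\Phi^{\star}(-x)=e^{-x/2}\Phi^{\star}(x)$. At the self-dual scale this is exactly Jacobi inversion \eqref{eq:KL-selfdual} for the centered kernel $\vartheta(t)-1-t^{-1/2}$. This turns it into an integral over $(0,1)$ with the same power as the first integral. The two pieces then glue, as in Riemann's classical argument, to
\[
c\int_0^\infty \bigl(\vartheta(t)-1-t^{-1/2}\bigr)\,t^{\sigma}\,\frac{dt}{t},\qquad \sigma=\tfrac34+iz,
\]
where the normalizing constant $c$ is inherited from the definition of $\Phi$ in \cite{PaperI}. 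This integral converges absolutely on $\Re\sigma=\tfrac34$: as $t\to\infty$ the integrand is $O(t^{-1/2+\Re\sigma-1})$ plus an exponentially small term, and as $t\to0$ it is controlled after inversion. This is consistent with Lemma~\ref{lem:boundary-formula}.

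Next, set $f:=\vartheta-1-t^{-1/2}$. By Lemma~\ref{lem:A-kills-singular}, $\mathcal A f=\mathcal A(K_L-1)=\widetilde K_{\mathrm{arch}}$. Integrating by parts twice gives
\[
\int_0^\infty (t^{3/2}f')'\,t^{\sigma-1}\,dt=(\sigma-1)\bigl(\sigma-\tfrac12\bigr)\int_0^\infty f(t)\,t^{\sigma-1}\,dt .
\]
For $\sigma=\tfrac34+iz$ the prefactor is $(-\tfrac14+iz)(\tfrac14+iz)=-(z^2+\tfrac1{16})$. Dividing by it produces the factor $(z^2+\tfrac1{16})^{-1}$ in \eqref{eq:boundary-Theta}, with the sign and the $2\pi$ absorbed into $c$.

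The boundary terms are $t^{3/2}f'(t)\,t^{\sigma-1}$ and $t^{1/2}f(t)\,t^{\sigma-1}$ at $0$ and $\infty$. At $\infty$ one has $t^{3/2}f'=\tfrac12+O(e^{-\pi t})$ times $t^{-1/4}$, which tends to $0$. At $0$, Jacobi inversion gives $f(t)=t^{-1/2}(\vartheta(1/t)-1)-1$; the resulting terms are $O(t^{3/4})$ and vanish. The decay needed for these limits comes from Lemma~\ref{lem:Theta-rapid-decay} together with inversion.

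I expect the main obstacle to be the bookkeeping of constants and exponents. One has to pin down, from \cite{PaperI}, the exact change of variables and normalization relating $\Phi^{\star}$ to $\widetilde K^{\star}_{\mathrm{sym}}$. This is what must produce precisely $t^{\frac34+iz}$ and the constant $\frac{1}{2\pi}$, including the sign. It also requires checking that the twisted symmetry matches the $t\mapsto t^{-1}$ substitution with the half-density weight $t^{-1/2}$. The analytic steps themselves (absolute convergence and vanishing boundary terms) are routine given Lemma~\ref{lem:Theta-rapid-decay} and Appendix~\ref{app:technical}.
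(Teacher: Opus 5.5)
Your route is the paper's route: pass to $t=e^{x}$, reduce to a Mellin integral of the centered theta kernel, and move $\mathcal A$ across by two integrations by parts. Your choice $f=\vartheta-1-t^{-1/2}$ even improves on the paper's proof, which states its relation for $\int_0^\infty (K_L(t)-1)\,t^{\frac14+iz}\frac{dt}{t}$, an integral that diverges at $t=0$. But your integration-by-parts identity drops a half-shift. Carrying out the two integrations, with exactly the boundary terms $t^{3/2}f'(t)\,t^{\sigma-1}$ and $t^{1/2}f(t)\,t^{\sigma-1}$ that you list, gives
\[
\int_0^\infty (t^{3/2}f')'\,t^{\sigma-1}\,dt=(\sigma-1)\bigl(\sigma-\tfrac12\bigr)\int_0^\infty f(t)\,t^{\sigma-\frac12}\,\frac{dt}{t},
\]
which is the identity the paper uses, not one with $\int_0^\infty f(t)\,t^{\sigma}\frac{dt}{t}$ on the right. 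Correspondingly, your glued integral at $\sigma=\tfrac34+iz$ does not exist. Since $f(t)\sim -t^{-1/2}$ as $t\to\infty$, the integrand is of size $t^{-3/4}$ there. That is exactly your own estimate $O(t^{-1/2+\Re\sigma-1})$, and it is not integrable. The Mellin transform of $\vartheta-1-t^{-1/2}$ converges only for $0<\Re\sigma<\tfrac12$. The two slips cancel formally, which is why the end formula looks right. What the left-hand side must actually be reduced to is $\int_0^\infty f(t)\,t^{\frac14\pm iz}\frac{dt}{t}$.

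The more serious gap is the gluing step, which you assert and defer as bookkeeping. This is where the argument breaks.

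With the dictionary the paper's proof uses, $\Phi^{\star}(x)=e^{-x/4}\widetilde K^{\star}(e^{x})$, Jacobi inversion $\widetilde K^{\star}(1/t)=t^{1/2}\widetilde K^{\star}(t)$ becomes $\Phi^{\star}(-x)=e^{x}\Phi^{\star}(x)$, not the twisted symmetry $e^{-x/2}\Phi^{\star}(x)$. So the two are not ``exactly'' the same thing, and you cannot use both. Concretely, the two pieces of $\mathcal B_{\mathrm{LB}}\Phi^{\star}(-\tfrac12+iz)$ are $\int_1^\infty\widetilde K^{\star}(t)\,t^{\frac14-iz}\frac{dt}{t}$ and $\int_1^\infty\widetilde K^{\star}(t)\,t^{-\frac54+iz}\frac{dt}{t}$. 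Inverting the second gives $\int_0^1\widetilde K^{\star}(t)\,t^{\frac74-iz}\frac{dt}{t}$, whose exponent does not match the first.

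Worse, the left side is not even real. Put $k(x):=\widetilde K^{\star}(e^{x})$, which is negative for $x\ge 0$. The imaginary part of the sum is $\int_0^\infty k(x)\bigl(e^{-5x/4}-e^{x/4}\bigr)\sin(zx)\,dx$. Its $z$-derivative at $z=0$ is $\int_0^\infty x\,k(x)\bigl(e^{-5x/4}-e^{x/4}\bigr)\,dx>0$. So the left side is non-real for small real $z\neq 0$, while $\Xi(2z)/\bigl(2\pi(z^2+\tfrac1{16})\bigr)$ is real, and no choice of your constant $c$ repairs this.

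If you instead take the twisted symmetry as primary, recombining the pieces just returns $\int_{\R}\Phi^{\star}(x)e^{(\frac12-iz)x}\,dx$. You would still need a dictionary to $\widetilde K^{\star}$ compatible with that symmetry and with the decay $|\Phi^{\star}(x)|\le Ce^{-3x/4}$. No dictionary of the form $c\,e^{ax}\widetilde K^{\star}(e^{\pm x})$ is compatible with both.

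The paper's own proof does not close this step either. It writes both pieces over $(0,\infty)$ although they run over $(1,\infty)$. It then reaches a single full-line Mellin integral only through ``the harmless symmetry $z\mapsto -z$'' and ``collecting the constant factors.'' The step you defer is exactly the missing one, and with the relations stated in the paper it cannot be supplied.
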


\begin{proof}
We start from \eqref{eq:boundary-formula}. By construction of the cycle-spectral kernel
(from the half-density data in \cite{PaperI}), the centered logarithmic kernel satisfies
\[
\Phi^{\star}(x)=e^{-x/4}\,\widetilde K^{\star}(e^x),
\qquad \widetilde K^{\star}(t)=\widetilde K(t)-1,
\]
with $\widetilde K(t)=K_L(t)-\frac{1}{\sqrt t}$ at the self-dual scale.
Substituting this into \eqref{eq:boundary-formula} and changing variables $t=e^x$
(with $dx=dt/t$, justified by absolute convergence) gives the explicit Mellin forms
\begin{align*}
\int_{0}^{\infty}\Phi^{\star}(x)e^{(\frac12-iz)x}\,dx
&=\int_{0}^{\infty}\widetilde K^{\star}(t)\,t^{\frac14-iz}\,\frac{dt}{t},\\
\int_{0}^{\infty}\Phi^{\star}(x)e^{-(1-iz)x}\,dx
&=\int_{0}^{\infty}\widetilde K^{\star}(t)\,t^{-\frac54+iz}\,\frac{dt}{t}.
\end{align*}
so $\mathcal B_{\mathrm{LB}}\Phi^{\star}(-\tfrac12+iz)$ is a finite linear combination of Mellin-type
integrals of $\widetilde K^{\star}$ with exponents shifted by $\pm iz$.

At the self-dual scale, the completion operator $\mathcal A$ from \eqref{eq:arch-operator}
removes the universal singular contributions and produces
$\widetilde K_{\mathrm{arch}}=\mathcal A(K_L-1)$ (Lemma~\ref{lem:A-kills-singular}).
To pass from Mellin integrals of $K_L-1$ to Mellin integrals of $\widetilde K_{\mathrm{arch}}$, we use
the following integration-by-parts identity: for any sufficiently regular $f$ with rapid decay at $0$ and $\infty$,
and any $s\in\C$,
\begin{equation}\label{eq:mellin-A-identity}
\int_{0}^{\infty}(\mathcal A f)(t)\,t^{s}\,\frac{dt}{t}
=(s-1)\Bigl(s-\tfrac12\Bigr)\int_{0}^{\infty} f(t)\,t^{s-\tfrac12}\,\frac{dt}{t},
\end{equation}
where the boundary terms vanish by rapid decay.
Taking $s=\tfrac34+iz$ and noting
\[
(s-1)\Bigl(s-\tfrac12\Bigr)=\Bigl(-\tfrac14+iz\Bigr)\Bigl(\tfrac14+iz\Bigr)=-(z^2+\tfrac1{16}),
\]
we obtain
\[
\int_{0}^{\infty} (K_L(t)-1)\,t^{\frac14+iz}\,\frac{dt}{t}
=-\frac{1}{z^2+\tfrac1{16}}\int_{0}^{\infty}\widetilde K_{\mathrm{arch}}(t)\,t^{\frac34+iz}\,\frac{dt}{t}.
\]
Applying this relation (and the harmless symmetry $z\mapsto -z$) to the Mellin combination above yields
\eqref{eq:boundary-Theta} after collecting the constant factors.

\end{proof}
\subsection{Identification with the Riemann $\Xi$-function}

We can now invoke the Mellin identification proved earlier.

\begin{theorem}[Left-boundary identity]
\label{thm:LB-identity}
For every $z\in\R$ one has
\begin{equation}
\label{eq:LB-identity}
\Xi(2z)
=
2\pi\Bigl(z^2+\tfrac1{16}\Bigr)\,
\mathcal B_{\mathrm{LB}}\Phi^{\star}\!\left(-\tfrac12+iz\right).
\end{equation}
\end{theorem}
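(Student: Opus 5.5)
The identity \eqref{eq:LB-identity} follows by chaining two results already stated. Lemma~\ref{lem:boundary-Theta} expresses the left-boundary value of the continued Laplace transform as an explicit rational multiple of a Mellin integral of $\widetilde K_{\mathrm{arch}}$. Theorem~\ref{thm:arch-mellin-identification} identifies that same Mellin integral with $\Xi(2z)$. So the plan is a short two-step substitution, preceded by a check that every object involved is defined and finite at real $z$.

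\emph{Step 1.} Fix $z\in\R$. By Lemma~\ref{lem:boundary-formula}, $\mathcal B_{\mathrm{LB}}\Phi^{\star}(-\tfrac12+iz)$ exists as a sum of two absolutely convergent integrals. Lemma~\ref{lem:boundary-Theta} then gives
\[
\mathcal B_{\mathrm{LB}}\Phi^{\star}\!\left(-\tfrac12+iz\right)=\frac{1}{2\pi(z^2+\tfrac1{16})}\int_0^\infty \widetilde K_{\mathrm{arch}}(t)\,t^{\frac34+iz}\,\frac{dt}{t}.
\]
The prefactor is finite and nonzero, since $z^2+\tfrac1{16}\ge\tfrac1{16}>0$ for real $z$. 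Hence we may multiply through by $2\pi(z^2+\tfrac1{16})$ without any division issue.

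\emph{Step 2.} The integral on the right is exactly $F_{\mathrm{arch}}(z)$ as defined in \eqref{eq:Farch-def}. It converges absolutely for every $z$, because Lemma~\ref{lem:Theta-rapid-decay} shows $\widetilde K_{\mathrm{arch}}=\Theta$ decays faster than any power of $t$ at both $0$ and $\infty$. At the self-dual scale $L^2=4\pi D$ (Lemma~\ref{lem:forced-selfdual}), Theorem~\ref{thm:arch-mellin-identification} gives $F_{\mathrm{arch}}(z)=\Xi(2z)$. Substituting this into Step~1 and rearranging yields \eqref{eq:LB-identity}.

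\emph{Main obstacle.} The two steps are formal; the substantive content sits inside Lemma~\ref{lem:boundary-Theta}, in passing from the two Mellin integrals of $\widetilde K^{\star}$ to a single integral of $\widetilde K_{\mathrm{arch}}$ via \eqref{eq:mellin-A-identity}. If I had to re-verify that passage, I would work as follows.

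\begin{enumerate}
\item Use the twisted symmetry $\Phi^{\star}(-x)=e^{-x/2}\Phi^{\star}(x)$, equivalently Jacobi inversion in the form \eqref{eq:KL-selfdual}. This should show that the two pieces, with exponents $\tfrac14-iz$ and $-\tfrac54+iz$, recombine into one Mellin transform of $K_L-1$ along the line $\Re=\tfrac14$.
\item Check that this recombination is consistent with the harmless $z\mapsto-z$ symmetry invoked in the lemma. This is legitimate because $\Xi$ is even.
\item Confirm that the boundary terms in the integration by parts genuinely vanish. This is the delicate point: $K_L-1$ alone is not rapidly decaying at $0$. One must use the centered combination, with the $t^{-1/2}$ singularity removed, so that the product $t^{3/2}f'(t)\,t^{s-1}$ tends to zero at both endpoints on the relevant line.
\end{enumerate}

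I would check the endpoint condition in item~3 first, by comparing exponents at $s=\tfrac34+iz$. Assuming it holds, as the earlier lemma asserts, the theorem follows by the substitution above.
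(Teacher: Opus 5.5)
Your two-step chaining (Lemma~\ref{lem:boundary-Theta} followed by Theorem~\ref{thm:arch-mellin-identification}) is exactly the paper's own proof, and it is valid as a deduction from those two statements. The gap is the step you deferred: Lemma~\ref{lem:boundary-Theta} is false for the objects the paper uses, and so is \eqref{eq:LB-identity}. The paper's proof shares this defect.

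\textbf{Counterexample at $z=0$.} At the self-dual scale, $\widetilde K^{\star}(t)=\vartheta(t)-1-t^{-1/2}$ and $\Phi^{\star}(x)=e^{-x/4}\widetilde K^{\star}(e^x)$. For $t\ge1$ one has $0<\vartheta(t)-1\le 2e^{-\pi t}/(1-e^{-\pi})<t^{-1/2}$, so $\Phi^{\star}<0$ on $[0,\infty)$. Hence at $z=0$ both integrals in \eqref{eq:boundary-formula} are negative, and the right side of \eqref{eq:LB-identity} is negative. But $\Xi(0)=\xi(\tfrac12)\approx0.497>0$.

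\textbf{Not a normalization slip.} As $|z|\to\infty$ the right side of \eqref{eq:LB-identity} tends to $3\pi(\vartheta(1)-2)\neq0$, while $\Xi(2z)\to0$. No choice of constant repairs the identity.

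\textbf{Why your item~1 fails.} The substitution $t=e^x$ maps $(0,\infty)$ onto $(1,\infty)$, not $(0,\infty)$. Over $(0,\infty)$ the integral $\int_0^\infty\widetilde K^{\star}(t)t^{-5/4+iz}\,dt/t$ would in fact diverge at $0$, since $\widetilde K^{\star}(0^+)=-1$. Moreover, Jacobi inversion gives $\widetilde K^{\star}(1/t)=t^{1/2}\widetilde K^{\star}(t)$, i.e.\ $\Phi^{\star}(-x)=e^{x}\Phi^{\star}(x)$, not $e^{-x/2}\Phi^{\star}(x)$. So the second integral in $\mathcal B_{\mathrm{LB}}\Phi^{\star}$ is not the reflected negative half-line, and the two pieces never recombine into one Mellin transform.

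\textbf{Why your item~3 fails.} Your suspicion is correct. Since $K_L-1\sim t^{-1/2}$ at $0$, the integral $\int_0^\infty(K_L-1)t^{1/4+iz}\,dt/t$ diverges, and the boundary terms in \eqref{eq:mellin-A-identity} grow like $t^{-1/4}$.

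\textbf{The repair.} Apply \eqref{eq:mellin-A-identity} to $f=\widetilde K^{\star}$ instead. This is legitimate because $\mathcal A\widetilde K^{\star}=\mathcal A(K_L-1)$ by Lemma~\ref{lem:A-kills-singular}, and now all boundary terms vanish at $s=\tfrac34+iz$. With Theorem~\ref{thm:arch-mellin-identification} and evenness of $\Xi$, this yields the true identity
\[
\Xi(2z)=-\Bigl(z^2+\tfrac1{16}\Bigr)\int_{\R}\Phi^{\star}(x)\,e^{(\frac12-iz)x}\,dx .
\]
This involves the genuine bilateral transform, with constant $-1$ rather than $2\pi$. That transform converges absolutely on $-\tfrac34<\Re s<-\tfrac14$, which already contains $\Re s=-\tfrac12$, so no continuation is needed. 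It differs from $\mathcal B_{\mathrm{LB}}\Phi^{\star}(-\tfrac12+iz)$ by $\int_0^\infty\Phi^{\star}(x)(e^{-x}-e^{x/2})e^{izx}\,dx\sim\tfrac{3\Phi^{\star}(0)}{2z^2}$. That discrepancy is exactly what produces the nonzero limit noted above.
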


\begin{proof}
By Lemma~\ref{lem:boundary-Theta} and Theorem~\ref{thm:arch-mellin-identification},
\[
\int_{0}^{\infty}\widetilde K_{\mathrm{arch}}(t)\,
t^{\frac34+iz}\,\frac{dt}{t}
=
\Xi(2z).
\]
Substituting into \eqref{eq:boundary-Theta} yields \eqref{eq:LB-identity}. All steps take place on regions of absolute
convergence; no analytic continuation is invoked.
\end{proof}

\begin{remark}[What this identity does and does not use]
The identity \eqref{eq:LB-identity} is obtained by combining:
(i) modular splitting and exact centering/self-duality (Section~\ref{sec:lb-continuation}),
and (ii) the classical Mellin transform of the Archimedean theta kernel
(Section~\ref{sec:completion-selfdual}). No spectral approximation, no real-entire (self-adjoint spectral)
factorization, and no Nevanlinna theory enter here. These will be used only in
the next section.
\end{remark}

\subsection{Extending the bridge identity to a strip}

\begin{lemma}[Bridge identity extends to a strip]\label{lem:bridge-strip}
Define
\[
X(w):=\xi\!\left(\tfrac12-iw\right)=\Xi(-w),
\qquad
F(w):=\mathcal B_{\mathrm{LB}}\Phi^\star\!\Bigl(-\tfrac12-\tfrac{i}{2}w\Bigr),
\]
and
\[
U(w):=\frac{\pi}{8}(4w^2+1).
\]
Then $F$ admits a holomorphic extension to the open strip $\{|\Im w|<\tfrac12\}$, and the
left-boundary bridge identity
\begin{equation}\label{eq:bridge-real}
X(w)=U(w)\,F(w)
\qquad (w\in\R)
\end{equation}
upgrades to an identity of holomorphic functions on the full strip:
\begin{equation}\label{eq:bridge-strip}
X(w)=U(w)\,F(w)
\qquad (|\Im w|<\tfrac12).
\end{equation}
\end{lemma}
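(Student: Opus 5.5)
The plan is to prove holomorphy of $F$ directly from the explicit integral representation \eqref{eq:left-strip-def}, rather than from Proposition~\ref{prop:left-strip-continuation} alone. Then \eqref{eq:bridge-real} follows from Theorem~\ref{thm:LB-identity} by a change of variable, and \eqref{eq:bridge-strip} follows from the identity theorem.

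\textbf{Holomorphy.} Under $s=-\tfrac12-\tfrac{i}{2}w$ one has $\Re s=-\tfrac12+\tfrac12\Im w$. The strip $|\Im w|<\tfrac12$ therefore corresponds to $-\tfrac34<\Re s<-\tfrac14$. This range reaches beyond the strip $-\tfrac12<\Re s<\tfrac14$ on which $\mathcal B_{\mathrm{LB}}\Phi^\star$ was defined in Definition~\ref{def:left-strip}.

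To handle this, I will reread the proof of Lemma~\ref{lem:left-strip-convergence}. The only input there is the one-sided bound $|\Phi^\star(x)|\le Ce^{-3x/4}$ for $x\ge0$. That proof in fact shows the first integral converges absolutely for $\Re s>-\tfrac34$ and the second for $\Re s<\tfrac54$. Hence the right-hand side of \eqref{eq:left-strip-def} converges absolutely on the wider strip $-\tfrac34<\Re s<\tfrac54$.

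On any compact subset $\{a\le\Re s\le b\}$ with $-\tfrac34<a<b<\tfrac54$, the integrands are dominated by
\[
Ce^{-(a+3/4)x}+Ce^{-(5/4-b)x},
\]
which is integrable and independent of $s$. By Morera's theorem or differentiation under the integral sign, the formula defines a holomorphic function there. It agrees with $\mathcal B_{\mathrm{LB}}\Phi^\star$ on $-\tfrac12<\Re s<\tfrac14$, so this is the holomorphic extension. Composing with the affine map $w\mapsto -\tfrac12-\tfrac i2 w$ gives $F$ holomorphic on $\{|\Im w|<\tfrac12\}$.

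\textbf{Identity on $\R$.} For real $w$, put $z=-w/2\in\R$. Then $-\tfrac12+iz=-\tfrac12-\tfrac i2w$, so $F(w)=\mathcal B_{\mathrm{LB}}\Phi^\star(-\tfrac12+iz)$. This is exactly the boundary value of Lemma~\ref{lem:boundary-formula}, and it coincides with the value of the extended integral formula because both come from the same absolutely convergent expression. Theorem~\ref{thm:LB-identity} gives
\[
\Xi(-w)=\Xi(2z)=2\pi\bigl(\tfrac{w^2}{4}+\tfrac1{16}\bigr)F(w)=\tfrac{\pi}{8}(4w^2+1)F(w)=U(w)F(w).
\]
Since $X(w)=\Xi(-w)$ by definition, this is \eqref{eq:bridge-real}.

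\textbf{Extension to the strip.} The function $X$ is entire and $U$ is a polynomial, so $X-UF$ is holomorphic on the connected strip $\{|\Im w|<\tfrac12\}$. It vanishes on $\R$, a set with accumulation points, so by the identity theorem it vanishes identically. This gives \eqref{eq:bridge-strip}.

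\textbf{Main obstacle.} The delicate step is the first one. Proposition~\ref{prop:left-strip-continuation} only provides holomorphy for $\Re s>-\tfrac12$, which covers merely half the $w$-strip, namely $\Im w>0$. The argument therefore depends on the absolute-convergence bounds in the proof of Lemma~\ref{lem:left-strip-convergence} genuinely extending down to $\Re s>-\tfrac34$. This in turn rests on the decay rate $e^{-3x/4}$ of $\Phi^\star$ on $(0,\infty)$ imported from \cite{PaperI}. If only a weaker decay rate were available, the strip would have to be shrunk accordingly.
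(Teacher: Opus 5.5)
Your holomorphy step takes a genuinely different route from the paper's. The paper gets holomorphy of $F$ only on $0<\Im w<\tfrac12$ from the left-strip formula. It then tries to reach $\Im w<0$ via the conjugation symmetry $\overline{\mathcal B_{\mathrm{LB}}\Phi^\star(s)}=\mathcal B_{\mathrm{LB}}\Phi^\star(\bar s)$ and Schwarz reflection. You instead read off from the proof of Lemma~\ref{lem:left-strip-convergence} that the two-integral formula converges absolutely and locally uniformly on $-\tfrac34<\Re s<\tfrac54$. So $F$ is holomorphic on $-\tfrac12<\Im w<\tfrac72$ with no reflection needed.

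Yours is the sound version. Under $s=-\tfrac12-\tfrac i2 w$, the conjugation symmetry becomes $\overline{F(w)}=F(-\bar w)$. That is a reflection in the imaginary axis, which maps the upper half-strip to itself and says nothing about $\Im w<0$. A genuine Schwarz reflection across $\R$ would need continuity of $F$ up to $\R$ and reality of $F$ on $\R$. The paper checks neither, and the integral formula does not give reality. Your version also makes the value on $\Re s=-\tfrac12$ an interior value, so Theorem~\ref{thm:LB-identity} can be fed in without a boundary-continuity argument. The price is the full $e^{-3x/4}$ decay, which you rightly flag. That rate is sharp: $\Phi^\star(x)=e^{-x/4}\widetilde K^\star(e^x)=-e^{-3x/4}+O(e^{-x/4-\pi e^x})$ gives the first integral a pole at $s=-\tfrac34$, i.e.\ at $w=-\tfrac i2$, where $U$ vanishes.

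However, there is a genuine failure that no choice of route repairs, and your own first step exposes it. The point $w=\tfrac i2$ corresponds to $s=-\tfrac14$. That lies in the open strip where you, and already Proposition~\ref{prop:left-strip-continuation}, have $F$ holomorphic. Run your identity-theorem step on the connected strip $-\tfrac12<\Im w<\tfrac72$: it carries $X=UF$ from $\R$ to $w=\tfrac i2$. There $U(\tfrac i2)=0$, but $X(\tfrac i2)=\xi(1)=\tfrac12$. So the real-line identity you import is false, and the lemma with it.

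The defect is upstream. For instance, the proof of Lemma~\ref{lem:boundary-Theta} substitutes $t=e^x$ for $x\in(0,\infty)$ but writes the resulting Mellin integrals over $(0,\infty)$ instead of $(1,\infty)$. Your deduction from the stated inputs is valid. Still, the step ``Theorem~\ref{thm:LB-identity} gives $\Xi(-w)=U(w)F(w)$'' is where the proof breaks. A one-line check at $w=\tfrac i2$, using the holomorphy you had just established, would have caught it.
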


\begin{proof}
We first obtain holomorphy of $F$ on the upper half-strip $0<\Im w<\tfrac12$ directly from the
defining left-strip integral formula. Indeed, writing $w=u+iv$ we have
\[
-\tfrac12-\tfrac{i}{2}w=-\tfrac12+\tfrac{v}{2}-\tfrac{i}{2}u,
\]
so for $0<v<\tfrac12$ the argument lies in the left strip $-\tfrac12<\Re(s)<\tfrac14$ where
$\mathcal B_{\mathrm{LB}}\Phi^\star(s)$ is defined by an absolutely convergent sum of Laplace integrals.
Local uniform domination (on compact subsets of $0<v<\tfrac12$) allows differentiation under the
integral sign, hence $F$ is holomorphic on $0<\Im w<\tfrac12$.

Next, since $\Phi^\star$ is real-valued on $(0,\infty)$, the defining integrals imply the conjugation
symmetry
\[
\overline{\mathcal B_{\mathrm{LB}}\Phi^\star(s)}=\mathcal B_{\mathrm{LB}}\Phi^\star(\overline{s})
\]
throughout the left strip. Consequently $\overline{F(w)}=F(\overline{w})$ on $0<\Im w<\tfrac12$, and
Schwarz reflection yields a holomorphic extension of $F$ to the lower half-strip $-\tfrac12<\Im w<0$,
hence to the full strip $|\Im w|<\tfrac12$.

Finally, $X$ is entire in $w$, and $U(w)$ is holomorphic and nonvanishing on $|\Im w|<\tfrac12$
(since its zeros are at $w=\pm i/2$, on the boundary). Thus both sides of \eqref{eq:bridge-real} are
holomorphic on $|\Im w|<\tfrac12$, and they agree for all real $w$. By the identity theorem, the
equality extends to all $|\Im w|<\tfrac12$, giving \eqref{eq:bridge-strip}.
\end{proof}

\section{Strip-unit forcing on an overlap strip}\label{sec:completion-ledger}
In this section we isolate the analytic forcing principle that turns boundary separation on strip
rectangles into divisor identification on the interior.  The reference family is the cycle spectral
determinant $P_N$ from Section~\ref{sec:spectral-ref} and Hypothesis~\ref{hyp:seam-holomorphy} supplies
a zero-free holomorphic unit $U_{\eta,N}$ so that $\Xi(2\cdot)$ is close to $U_{\eta,N}P_N$ on
$\partial R_T$.

Define the \emph{normalized seam ratio}
\[
\widetilde R_{N}(w):=\frac{\Xi(2w)}{U_{\eta,N}(w)P_N(w)}.
\]
On the boundary $\partial R_T$ the separation inequality in Hypothesis~\ref{hyp:seam-holomorphy} implies
\[
|\widetilde R_N(w)-1|<1,\qquad w\in\partial R_T,
\]
so in particular $\widetilde R_N$ is holomorphic and nonvanishing on $\partial R_T$ and has winding
number $0$ there.

\begin{lemma}[Rouch\'e forcing on strip rectangles]\label{lem:rouche-rectangle}
Assume Hypothesis~\ref{hyp:seam-holomorphy}.  Then for all $T\ge T_0(\eta)$ the functions $w\mapsto
\Xi(2w)$ and $w\mapsto U_{\eta,N}(w)P_N(w)$ have the same number of zeros in $R_T$ (counted with
multiplicity).  Since $U_{\eta,N}$ is zero-free, $\Xi(2\cdot)$ and $P_N$ have the same zero divisor in
$R_T$.
\end{lemma}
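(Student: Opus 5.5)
This is a direct application of Rouché's theorem on the closed rectangle $R_T$, with $f(w):=U_{\eta,N}(w)P_N(w)$ and $g(w):=\Xi(2w)$. I would proceed in four steps.

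\textbf{Step 1 (holomorphy near $\overline{R_T}$).} Since $\Xi$ is entire, $w\mapsto\Xi(2w)$ is entire. The factor $P_N$ is a finite product of the entire functions $\widetilde q_N(w)-\lambda_k^{(N)}$, hence entire. The unit $U_{\eta,N}$ is holomorphic and zero-free on $S_\eta$. The one subtlety is that $\overline{R_T}$ reaches $|\Im w|=\eta$, so it is not contained in the open strip $S_\eta$. I would handle this either by reading Hypothesis~\ref{hyp:seam-holomorphy} as supplying $U_{\eta,N}$ on a slightly larger strip $S_{\eta'}$ with $\eta'>\eta$, or by running the argument on $R_T$ for a smaller $\eta_0<\eta$, exactly as Theorem~\ref{thm:strip-unit} does. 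Either way, $f$ and $g$ are holomorphic on a neighborhood of $\overline{R_T}$.

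\textbf{Step 2 (boundary inequality).} For every $w\in\partial R_T$, Hypothesis~\ref{hyp:seam-holomorphy} gives
\[
|g(w)-f(w)|\le\sup_{\partial R_T}|g-f|<\inf_{\partial R_T}|f|\le|f(w)|.
\]
In particular $f$ has no zeros on $\partial R_T$. Then $|g(w)|\ge|f(w)|-|g(w)-f(w)|>0$, so $g$ has no zeros on $\partial R_T$ either. The strict inequality $|g-f|<|f|$ on $\partial R_T$ is precisely the hypothesis of the symmetric form of Rouché's theorem.

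\textbf{Step 3 (count zeros).} I would apply Rouché via the argument principle. On $\partial R_T$ the quotient $g/f$ is holomorphic and takes values in the disc $\{|\zeta-1|<1\}$, which lies in the right half-plane. A continuous branch of $\log(g/f)$ therefore exists along the boundary, so $g/f$ has winding number $0$ around $\partial R_T$. By the argument principle this gives
\[
Z_{R_T}(g)-Z_{R_T}(f)=\frac{1}{2\pi i}\oint_{\partial R_T}\frac{(g/f)'}{g/f}\,dw=0,
\]
where $Z_{R_T}$ denotes the number of zeros in $R_T$ counted with multiplicity. Since neither function vanishes on $\partial R_T$, the count over the interior equals the count over the closed rectangle. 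Hence $g$ and $f$ have the same number of zeros in $R_T$. Because $U_{\eta,N}$ is zero-free, the zeros of $f$ in $R_T$ are exactly those of $P_N$, multiplicities included, which gives the counting statement for $\Xi(2\cdot)$ and $P_N$.

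\textbf{Step 4 (the main obstacle: from counts to divisors).} Rouché only yields equality of the \emph{number} of zeros. The lemma's final sentence claims equality of the zero \emph{divisors}, that is, the same locations with the same multiplicities. I would derive this from the hypothesis holding for all $T\ge T_0$ with the same $N$: counts agree on every $R_T$ with $T\ge T_0$. However, nested rectangles $[-T,T]\times[-\eta,\eta]$ only separate zeros by the size of $|\Re w|$, so this recovers location information only up to symmetric real-part shells, not individual points.

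To pin down locations I would use the argument of Step 3 on arbitrary small subrectangles $Q\subset R_T$. This requires the pointwise inequality $|g-f|<|f|$ on $\partial Q$. The supremum/infimum hypothesis does not provide that, so some strengthening is needed, for instance that $|\widetilde R_N-1|<1$ holds throughout $R_T$ and not merely on its boundary. If no such strengthening is available, I would state the conclusion as equality of zero counts in each $R_T$, and defer the pointwise divisor identification to the strip-unit argument of Theorem~\ref{thm:strip-unit}. That theorem makes $\widetilde R_N$ holomorphic and zero-free on $S_\eta$, which identifies the divisors directly. This upgrade from counts to divisors is the step I expect to be the real obstacle.
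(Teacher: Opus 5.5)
Your Steps 1--3 are the paper's proof, which applies Rouch\'e's theorem to $\Xi(2\cdot)$ and $U_{\eta,N}P_N$ on $\partial R_T$ using the sup/inf inequality of Hypothesis~\ref{hyp:seam-holomorphy}. You carry it out more carefully than the paper does. On the boundary issue in Step~1, shrinking to $\eta_0<\eta$ does not help, since the hypothesis at level $\eta_0$ has the same defect. The honest reading is that Hypothesis~\ref{hyp:seam-holomorphy} evaluates $U_{\eta,N}$ on $\partial R_T$ and so tacitly assumes it extends continuously to $|\Im w|=\eta$. Continuity up to $\partial R_T$ is all Rouch\'e needs, by applying it on slightly shrunken rectangles.

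Your Step~4 diagnosis is correct, and it applies verbatim to the paper. The paper's only justification for the final sentence is that ``the unit factor $U_{\eta,N}$ does not contribute zeros.'' That identifies the divisors of $U_{\eta,N}P_N$ and $P_N$, but says nothing about where the zeros of $\Xi(2\cdot)$ lie. Rouch\'e gives equal counts and nothing more. Varying $T$ cannot localize zeros either, all the more so because the hypothesis lets $N$ depend on $T$.

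You can go further: the divisor claim is not just unproved but false, so no strengthening rescues it. Because $\lambda_0^{(N)}=0$, $P_N$ contains the factor $\widetilde q_N(w)=(N/\pi)^2\sin^2(\pi w/N)$. Hence $P_N$ has a double zero at $w=0\in R_T$, while $\Xi(0)=\xi(\tfrac12)\approx 0.497\neq 0$. So $\Xi(2\cdot)$ and $P_N$ have different divisors on every $R_T$, for every $N$, and the lemma's final sentence fails whenever its hypothesis holds.

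The same double pole of $\widetilde R_N$ at $0$ rules out your suggested strengthening $|\widetilde R_N-1|<1$ on all of $R_T$. It also empties your fallback to Theorem~\ref{thm:strip-unit}. Hypothesis~\ref{hyp:sector-control} requires $\widetilde R$ to be holomorphic near $\overline{R_T}$, and this fails at $0$ because the normalizing unit is zero-free. The statement you can prove, and should state, is your fallback: for each $T\ge T_0(\eta)$, $\Xi(2\cdot)$ and $P_{N(T)}$ have the same number of zeros in $R_T$, counted with multiplicity.
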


\begin{proof}
On $\partial R_T$ we have $|\Xi(2w)-U_{\eta,N}(w)P_N(w)|<|U_{\eta,N}(w)P_N(w)|$ by
Hypothesis~\ref{hyp:seam-holomorphy}.  Rouch\'e's theorem gives equality of zero counts in $R_T$.  The
unit factor $U_{\eta,N}$ does not contribute zeros.
\end{proof}

The remainder of the paper develops strip holomorphy and bounded-type control for the seam ratio
under mild additional hypotheses (e.g.\ sector control, Hypothesis~\ref{hyp:sector-control}), so that the
divisor identification can be upgraded to a strip-unit statement for a suitably normalized ratio.
The required quantitative boundary bounds and an admissible choice of the regime $N=N(T)$ are
established in Paper~III.

\section{Exhaustion of admissible overlap strips}\label{sec:exhaustion}
The forcing mechanism above applies on any fixed strip $S_\eta$ once the boundary separation
hypothesis is verified on a family of rectangles $R_T$ exhausting the strip.  Paper~III provides the
required strip-uniform estimates for $\Xi(2\cdot)$ together with an admissible choice of cycle size
$N=N(T)$ so that Hypothesis~\ref{hyp:seam-holomorphy} holds for each $\eta<\frac12$ (and, in
particular, for each $\eta<\frac14$ relevant to the critical strip in the $w$-variable).

In this way one obtains divisor identification between $\Xi(2\cdot)$ and the cycle spectral
determinant reference family $P_N$ on an exhaustion of overlap substrips.  Combined with the real-zero
property of $P_N$ (Paper~I) and the symmetry of $\Xi$, this is the structural route by which the
trilogy aims to force the nontrivial zeros of $\zeta$ onto the critical line.

\section{Further remarks}

\subsection{Logical separation and non-circularity}

We briefly summarize the logical structure of the argument, emphasizing the strict separation
of inputs and the absence of circularity. The construction begins with finite reversible
Markov dynamics and produces a scaling-limit trace kernel via lift, periodization, and a
controlled Gaussian limit. The self-dual normalization and the Archimedean completion are
fixed entirely at the level of this scaling limit and do not depend on any number-theoretic
data.

Independently, the logarithmic kernel $\Phi^{\star}$ is constructed and shown to satisfy exact
reflection symmetry and spectral approximation. The resulting real-entire (self-adjoint spectral) factorization of its
bilateral Laplace transform is therefore obtained without reference to Mellin transforms,
theta functions, or the Riemann zeta function.

The Mellin-side analysis enters only at a later stage, through the Archimedean-completed kernel
$\widetilde K_{\mathrm{arch}}$, whose Mellin transform is identified explicitly with the
classical $\Xi$-function. This identification is one-directional: it does not feed back into,
or constrain, the Laplace-side constructions.

Finally, the two sides are compared through a boundary identity obtained by modular splitting
and exact self-duality, followed by a strip-based rigidity argument. At no point is analytic
continuation of $\zeta$ or $\Xi$ assumed beyond what is already built into their classical
integral representations.

\subsection{Stress testing and potential failure modes}

The argument was stress-tested against several natural potential failure modes.

First, the modular splitting used to continue $\mathcal B\Phi^{\star}$ to the left strip could
fail if either the twisted symmetry were inexact or the decay of $\Phi^{\star}$ were
insufficient to justify absolute convergence of the split integrals. Both points are verified
explicitly in Sections~\ref{sec:completion-selfdual} and~\ref{sec:lb-continuation}.

Second, the boundary identity could introduce spurious poles or zeros if boundary terms failed
to vanish under integration by parts. This possibility is ruled out by the rapid decay of the
Archimedean kernel $\widetilde K_{\mathrm{arch}}$ at both $0$ and $\infty$, established
independently of any factorization or rigidity arguments.

Third, the strip-unit forcing mechanism relies on bounded-type control on strict substrips.
On the right-hand side this control follows from the Schoenberg--Edrei--Karlin factorization,
while on the left-hand side it follows from the explicit integral representation obtained by
modular splitting. No additional growth assumptions are required.

\subsection{Remarks on scope}

The method developed here isolates a general mechanism by which exact self-duality, total
positivity, and analytic rigidity combine to force real-zero phenomena. While the present
paper focuses on the Riemann $\Xi$-function, the framework is not inherently tied to zeta
functions and may apply more broadly in settings where comparable kernel-level symmetries and
positivity properties can be established.

Throughout, we have attempted to make all analytic interchanges, normalizations, and limiting
procedures explicit, so that each step may be checked independently. We would welcome further
scrutiny, refinement, or simplification of any part of the argument.

\bibliographystyle{plain}
\bibliography{citations}

\appendix

\section{Technical analytic justifications}
\label{app:technical}

This appendix collects several analytic estimates and justifications that are
used implicitly or explicitly in the main text. None of the results here
introduce new objects or alter the logical structure of the argument; they serve
only to justify standard limiting procedures, termwise operations, and boundary
manipulations already invoked.

\subsection{Uniform local central limit bounds}
\label{app:ULCLT}

In Section~\ref{sec:completion-selfdual} we appealed to a uniform local central
limit theorem (ULCLT) to pass from the discrete primitive dynamics to the
scaling-limit kernel $K_L$. We record here a representative bound sufficient for
all uses in the paper.

\begin{lemma}[Uniform local CLT estimate (translation-invariant cycle)]
\label{lem:ULCLT}
Assume the conductances are translation-invariant, so that the generator on $\Z/N\Z$ is
\[
(\mathcal L_N f)(j)=a\bigl(f(j+1)-f(j)\bigr)+a\bigl(f(j-1)-f(j)\bigr),
\]
and the effective diffusion constant is $D=a$. Let $p_t^{(N)}(j)$ denote the transition
kernel started from $0$ and viewed on $\Z$ via the canonical lift. Then for every compact
interval $[t_0,t_1]\subset(0,\infty)$ there exists $C=C(t_0,t_1)$ such that
\[
\sup_{t\in[t_0,t_1]}\sup_{j\in\Z}
\Bigl|
p_t^{(N)}(j)
-
\frac{1}{\sqrt{4\pi Dt}}\exp\!\Bigl(-\frac{j^2}{4Dt}\Bigr)
\Bigr|
\le
\frac{C}{N},
\]
for all sufficiently large $N$.
\end{lemma}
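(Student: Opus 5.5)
The plan is to work directly with the Fourier representation of the lifted kernel in the unscaled time variable $t$, exactly as the statement is worded, and to reduce the uniform-in-$j$ bound to an $L^1$ estimate on characteristic functions. With translation-invariant conductances $a$, the canonical lift of the cycle dynamics to $\Z$ is the continuous-time nearest-neighbour walk on $\Z$ with jump rate $a$ in each direction. Its kernel started from $0$ is
\[
p_t^{(N)}(j)=\frac{1}{2\pi}\int_{-\pi}^{\pi}e^{-2at(1-\cos\theta)}e^{ij\theta}\,d\theta ,
\]
and the Gaussian comparison term is
\[
\frac{1}{\sqrt{4\pi Dt}}e^{-j^2/(4Dt)}=\frac{1}{2\pi}\int_{\R}e^{-at\theta^2}e^{ij\theta}\,d\theta
\qquad (D=a).
\]
Subtracting the two and using $|e^{ij\theta}|=1$ bounds the supremum over $j\in\Z$ by
\[
\frac{1}{2\pi}\int_{-\pi}^{\pi}\bigl|e^{-2at(1-\cos\theta)}-e^{-at\theta^2}\bigr|\,d\theta
+\frac{1}{2\pi}\int_{|\theta|>\pi}e^{-at\theta^2}\,d\theta .
\]
This expression is continuous in $t$, so its supremum over $t\in[t_0,t_1]$ is a finite constant $C_0(t_0,t_1)$.

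The key step, and the main obstacle, is to make this constant carry a factor $1/N$. In the representation above nothing depends on $N$, because the lift of a translation-invariant cycle walk is the same walk on $\Z$ for every $N$. The inequality "$\le C/N$ for all large $N$" would therefore force the $N$-independent left-hand side to vanish identically. I will test this before attempting any estimate.

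Take $j=0$ and $t=1$. The left-hand side is then $\bigl|e^{-2a}I_0(2a)-(4\pi a)^{-1/2}\bigr|$. This is strictly positive for generic $a$, since for example it behaves like $1-(4\pi a)^{-1/2}\neq 0$ as $a\to0$. So the bound cannot hold literally with the $N$-independent lift.

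The plan therefore has the following shape.
\begin{enumerate}
\item Carry out the Fourier reduction above to identify the sharp quantity $C_0(t_0,t_1)$.
\item Exhibit the explicit pair $(j,t)=(0,1)$, with $t_0\le 1\le t_1$, at which this quantity is bounded below by a positive constant independent of $N$.
\item Conclude that, as worded, the lemma holds only in the weaker form with an $O(1)$ right-hand side (namely $C_0$). The $C/N$ rate requires an $N$-dependent time or space scaling, which the statement does not include.
\end{enumerate}
If one insists on proving something with $N$-dependence in the stated variables, the only available route is to read "viewed on $\Z$" as a periodized kernel $\sum_m p_t(j+mN)$. I would then compare it with the periodized Gaussian. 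The wrap-around terms with $m\neq0$ on both sides are $O(e^{-cN^2})$ uniformly for $t\in[t_0,t_1]$, which is certainly at most $C/N$. The $m=0$ difference, however, remains the same $N$-independent $C_0$. The obstacle thus persists under that reading too, and step 2 is where I expect the argument to stall.
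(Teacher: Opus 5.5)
Your diagnosis is correct. As worded the lemma is false, so no proof can exist. Your Fourier reduction, together with the observation that the lifted walk on $\Z$ does not depend on $N$, is the right way to see it.

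One tightening of the witness. The pair $(j,t)=(0,1)$ works only for generic $a$: the difference $e^{-2a}I_0(2a)-(4\pi a)^{-1/2}$ is negative for small $a$ and positive for large $a$, so it vanishes at some $a$. Argue in $t$ instead. For each fixed $a>0$ the function $h(t):=e^{-2at}I_0(2at)-(4\pi at)^{-1/2}$ is real-analytic on $(0,\infty)$ and tends to $-\infty$ as $t\downarrow0$. Its zeros are therefore isolated. Hence $\sup_{t\in[t_0,t_1]}|h(t)|$ is a positive, $N$-independent number for every $a$ and every nondegenerate $[t_0,t_1]$.

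Under the periodized reading there is an even cruder witness, because the statement compares with the unperiodized Gaussian. At $j=N$ the cycle kernel equals its value at $j=0$, which is at least $e^{-2at_1}$. The Gaussian there is at most $(4\pi Dt_0)^{-1/2}e^{-N^2/(4Dt_1)}$.

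The paper's proof works with the finite cycle sum \eqref{eq:fourier-heatkernel}. It fails exactly where your computation predicts:
\begin{enumerate}
\item In Step~2 the high-frequency tail is bounded by the fixed constant $e^{-2at_0c_0}$, which is then declared to be $O(1/N)$. It is not.
\item In Step~3, $\frac{C_1}{N^5}\cdot C_2N^5$ is $O(1)$, not $C_3/N$. The quartic error in the dispersion relation is of order one for $k\asymp N$, and those frequencies carry a positive proportion of the Riemann sum.
\item In Step~4 the Poisson terms with $m\neq0$ are negligible only for $|j|\le N/2$, not uniformly in $j\in\Z$.
\end{enumerate}
Consistent with your remark, for fixed $t$ the cycle sum tends to $e^{-2at}I_j(2at)$, not to the Gaussian.

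What the scaling limit in Definition~\ref{def:scaling-limit} actually needs is the diffusively rescaled estimate
\[
\sup_{t\in[t_0,t_1]}\sup_{j\in\Z}\Bigl|N\,p^{(N)}_{N^2t}(j)-\frac{1}{\sqrt{4\pi Dt}}\sum_{m\in\Z}e^{-(j/N+m)^2/(4Dt)}\Bigr|\le\frac{C}{N^2},
\]
with $p^{(N)}$ the periodic cycle kernel. For this statement the paper's four steps do go through. After $t\mapsto N^2t$ the dispersion error becomes $O(tk^4/N^2)$ against the weight $e^{-ctk^2}$, and both high-frequency tails are $O(Ne^{-cN^2})$. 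This is the precise form of the $N$-dependent scaling you say the $C/N$ rate requires.
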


\begin{proof}
Because the chain is translation-invariant on $\Z/N\Z$, the Fourier modes
$e_k(j):=e^{2\pi i k j/N}$ diagonalize $\mathcal L_N$, and the corresponding eigenvalues are
\[
\lambda_k
=
a\bigl(e^{2\pi i k/N}-1\bigr)+a\bigl(e^{-2\pi i k/N}-1\bigr)
=
-2a\Bigl(1-\cos\frac{2\pi k}{N}\Bigr)
\qquad (k=0,1,\dots,N-1).
\]
Hence the heat kernel on the cycle has the exact representation
\begin{equation}\label{eq:fourier-heatkernel}
p_t^{(N)}(j)
=
\frac{1}{N}\sum_{k=0}^{N-1}\exp\!\bigl(t\lambda_k\bigr)\,e^{2\pi i k j/N}.
\end{equation}

Fix $[t_0,t_1]\subset(0,\infty)$. We compare \eqref{eq:fourier-heatkernel} to the Gaussian
kernel on $\R$ by isolating the small-frequency contribution and bounding the tail.

\smallskip
\noindent\textbf{Step 1: quadratic approximation of the dispersion relation.}
Write $\theta_k:=2\pi k/N\in[0,2\pi)$. For $|\theta|\le \pi$ one has the Taylor expansion
\[
1-\cos\theta=\frac{\theta^2}{2}+O(\theta^4),
\]
with an absolute implied constant. Therefore, for $|k|\le N/4$ (so that $|\theta_k|\le \pi/2$),
\begin{equation}\label{eq:dispersion-approx}
t\lambda_k
=
-2a t\Bigl(1-\cos\theta_k\Bigr)
=
-a t\,\theta_k^2 + O\!\Bigl(t\,\theta_k^4\Bigr)
=
-4\pi^2 D t\,\frac{k^2}{N^2}+O\!\Bigl(t\,\frac{k^4}{N^4}\Bigr),
\end{equation}
uniformly for $t\in[t_0,t_1]$.

\smallskip
\noindent\textbf{Step 2: tail bound for large frequencies.}
For $|k|\ge N/4$ (interpreting $k$ modulo $N$ and taking the representative in
$\{-\lfloor N/2\rfloor,\dots,\lfloor (N-1)/2\rfloor\}$), we have $|\theta_k|\in[\pi/2,\pi]$ and
hence $1-\cos\theta_k\ge c_0$ for an absolute constant $c_0>0$. Thus
\[
\Re(t\lambda_k)\le -2a t\,c_0 \le -2a t_0 c_0,
\]
so the tail contribution satisfies, uniformly in $t\in[t_0,t_1]$ and $j\in\Z$,
\begin{equation}\label{eq:tail}
\Bigl|\frac{1}{N}\sum_{|k|\ge N/4}\exp(t\lambda_k)\,e^{2\pi i k j/N}\Bigr|
\le
\frac{1}{N}\sum_{|k|\ge N/4}e^{-2a t_0 c_0}
\le
e^{-2a t_0 c_0}.
\end{equation}
Since the right-hand side is a fixed number in $(0,1)$, it is in particular $O(1/N)$ for
all sufficiently large $N$ (absorbing the threshold into the phrase ``sufficiently large'').

\smallskip
\noindent\textbf{Step 3: replacing the low-frequency sum by an integral.}
Restrict to $|k|<N/4$ and use \eqref{eq:dispersion-approx}. Write
\[
A_{t,N}(k):=\exp\!\Bigl(-4\pi^2 D t\,\frac{k^2}{N^2}\Bigr),
\qquad
B_{t,N}(k):=\exp(t\lambda_k).
\]
Then \eqref{eq:dispersion-approx} gives
\[
B_{t,N}(k)=A_{t,N}(k)\,\exp\!\Bigl(O\!\bigl(t\,k^4/N^4\bigr)\Bigr)
=
A_{t,N}(k)\Bigl(1+O\!\bigl(k^4/N^4\bigr)\Bigr),
\]
uniformly for $t\in[t_0,t_1]$ and $|k|<N/4$. Hence
\begin{align*}
\frac{1}{N}\sum_{|k|<N/4}\bigl|B_{t,N}(k)-A_{t,N}(k)\bigr|
&\le
\frac{C_1}{N}\sum_{|k|<N/4}A_{t,N}(k)\,\frac{k^4}{N^4}\\
&\le
\frac{C_1}{N^5}\sum_{k\in\Z}k^4\exp\!\Bigl(-4\pi^2 D t_0\,\frac{k^2}{N^2}\Bigr).
\end{align*}
Estimating the last sum by a Riemann integral with the change of variables $u=k/N$ gives
\[
\sum_{k\in\Z}k^4 e^{-c k^2/N^2}
\le
C_2 N^5
\quad\text{for }c=4\pi^2Dt_0,
\]
so the preceding display is $\le C_3/N$ uniformly in $t\in[t_0,t_1]$.

Therefore, uniformly in $t\in[t_0,t_1]$ and $j\in\Z$,
\begin{equation}\label{eq:lowfreq-replace}
\Bigl|
\frac{1}{N}\sum_{|k|<N/4}e^{t\lambda_k}e^{2\pi i k j/N}
-
\frac{1}{N}\sum_{|k|<N/4}e^{-4\pi^2 D t\,k^2/N^2}\,e^{2\pi i k j/N}
\Bigr|
\le
\frac{C_3}{N}.
\end{equation}

\smallskip
\noindent\textbf{Step 4: Poisson summation and the Gaussian kernel.}
Extend the truncated Gaussian sum to all $k\in\Z$ at cost $O(1/N)$, since
\[
\frac{1}{N}\sum_{|k|\ge N/4}e^{-4\pi^2Dt\,k^2/N^2}
\le
\frac{1}{N}\sum_{|k|\ge N/4}e^{-4\pi^2Dt_0\,k^2/N^2}
\le
\frac{C_4}{N}.
\]
Thus,
\[
\frac{1}{N}\sum_{|k|<N/4}e^{-4\pi^2 D t\,k^2/N^2}e^{2\pi i k j/N}
=
\frac{1}{N}\sum_{k\in\Z}e^{-4\pi^2 D t\,k^2/N^2}e^{2\pi i k j/N}
+O\!\Bigl(\frac{1}{N}\Bigr).
\]
By the Poisson summation formula (or the standard Jacobi theta identity),
\[
\frac{1}{N}\sum_{k\in\Z}e^{-4\pi^2 D t\,k^2/N^2}e^{2\pi i k j/N}
=
\frac{1}{\sqrt{4\pi Dt}}
\sum_{m\in\Z}\exp\!\Bigl(-\frac{(j+mN)^2}{4Dt}\Bigr).
\]
Since $t\in[t_0,t_1]$ is bounded away from $0$, the terms with $m\neq0$ are exponentially small in
$N$ uniformly in $j\in\Z$, so
\[
\sup_{t\in[t_0,t_1]}\sup_{j\in\Z}
\Bigl|
\frac{1}{\sqrt{4\pi Dt}}
\sum_{m\in\Z}e^{-(j+mN)^2/(4Dt)}
-
\frac{1}{\sqrt{4\pi Dt}}e^{-j^2/(4Dt)}
\Bigr|
\le
e^{-c_5 N^2}
\le
\frac{C_5}{N}
\]
for all sufficiently large $N$.

\smallskip
\noindent\textbf{Conclusion.}
Combining the tail estimate \eqref{eq:tail}, the low-frequency replacement
\eqref{eq:lowfreq-replace}, and the Poisson-summation identification above yields
\[
\sup_{t\in[t_0,t_1]}\sup_{j\in\Z}
\Bigl|
p_t^{(N)}(j)
-
\frac{1}{\sqrt{4\pi Dt}}\exp\!\Bigl(-\frac{j^2}{4Dt}\Bigr)
\Bigr|
\le
\frac{C}{N},
\]
for a constant $C=C(t_0,t_1)$ and all sufficiently large $N$, as claimed.
\end{proof}

\subsection{Dominated convergence and termwise operations}
\label{app:dominated}

Several arguments in Sections~\ref{sec:completion-selfdual},
\cite{PaperI}, and~\ref{sec:completion-selfdual} rely on exchanging
limits, sums, derivatives, and integrals. We record here a generic domination
principle that applies uniformly to all such steps.

\begin{lemma}[Uniform domination for theta-series]
\label{lem:theta-domination}
Let
\[
\Theta(t):=\sum_{n\ge1} P(n,t)\,e^{-\pi n^2 t},
\]
where $P(n,t)$ is a finite linear combination of monomials of the form
$n^{\alpha}t^{\beta/2}$ with $\alpha\in\Z_{\ge0}$ and $\beta\in\Z$.
Then:
\begin{enumerate}[label=(\roman*)]
\item for each $k\ge0$, the series defining $\partial_t^k\Theta(t)$ converges
absolutely and locally uniformly on $(0,\infty)$;
\item for each $\sigma\in\mathbb R$, the Mellin-weighted function
$t\mapsto \Theta(t)\,t^{\sigma}$ is integrable on $(0,\infty)$ whenever the integral
is (formally) convergent at $0$ and $\infty$.
\end{enumerate}
\end{lemma}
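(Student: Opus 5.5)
The plan is to reduce both parts to explicit bounds on the single-variable sums $S_\alpha(t):=\sum_{n\ge1}n^{\alpha}e^{-\pi n^2 t}$ and then apply the Weierstrass $M$-test and dominated convergence. Since $P(n,t)$ is a finite combination of monomials $n^{\alpha}t^{\beta/2}$, it suffices to treat a single term $n^{\alpha}t^{\beta/2}e^{-\pi n^2 t}$ and sum over the finitely many monomials at the end.

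For (i), differentiate one monomial term $k$ times by the Leibniz rule. Each derivative either lowers the power of $t$ by one, producing $t^{\beta/2-1}$, or brings down a factor $-\pi n^2$. Hence $\partial_t^k\bigl(n^{\alpha}t^{\beta/2}e^{-\pi n^2t}\bigr)$ is a finite combination of terms $n^{\alpha+2j}t^{\beta/2-(k-j)}e^{-\pi n^2 t}$ with $0\le j\le k$. Fix a compact interval $[t_0,t_1]\subset(0,\infty)$. There the powers of $t$ are bounded by a constant $C(t_0,t_1,\beta,k)$, and $e^{-\pi n^2t}\le e^{-\pi n^2 t_0}$. So each termwise $k$-th derivative is dominated by $C\,n^{\alpha+2k}e^{-\pi n^2t_0}$, which is summable in $n$. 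The $M$-test gives absolute and uniform convergence of the differentiated series on $[t_0,t_1]$. The standard theorem on termwise differentiation of uniformly convergent series, applied inductively in $k$, then identifies the sum with $\partial_t^k\Theta$.

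For (ii), I would estimate the two ends separately. At $\infty$, for $t\ge1$ we have $e^{-\pi n^2t}\le e^{-\pi(n^2-1)}e^{-\pi t}$, so $|\Theta(t)|\le C\,t^{B}e^{-\pi t}$, where $B$ is the largest $\beta/2$ occurring. Thus $\Theta(t)t^{\sigma}$ is integrable on $[1,\infty)$ for every $\sigma$, and there is never an obstruction at $\infty$. At $0$, I would use the asymptotic $S_\alpha(t)=c_\alpha t^{-(\alpha+1)/2}+O(1)$ as $t\to0^+$, with $c_\alpha=\tfrac12\Gamma(\tfrac{\alpha+1}{2})\pi^{-(\alpha+1)/2}$. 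This follows by comparing the sum with $\int_0^\infty u^{\alpha}e^{-\pi u^2t}\,du$: the summand is unimodal in $n$, so the error is at most its maximum, which is $O(t^{-\alpha/2})$. For even $\alpha$ one can instead differentiate Jacobi's inversion formula \eqref{eq:jacobi}, which gives the sharper statement that the error is a constant plus an exponentially small term. Summing over monomials yields $|\Theta(t)|\le C\,t^{\gamma}$ on $(0,1]$, where $\gamma=\min\bigl(\beta/2-(\alpha+1)/2\bigr)$ over the monomials present. This gives integrability of $\Theta(t)t^{\sigma}$ on $(0,1]$ as soon as $\sigma+\gamma>-1$ (with respect to $dt$; shift by one for $dt/t$). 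I will take the lemma's phrase ``formally convergent at $0$'' to mean exactly this exponent condition derived from the termwise leading asymptotics. Dominated convergence then also justifies interchanging the sum with the Mellin integral.

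The main obstacle is the endpoint $t\to0^+$, specifically the meaning of ``formally convergent''. The crude termwise bound ignores cancellation between monomials. For the specific kernel $\Theta$ of Lemma~\ref{lem:arch-equals-Theta}, the leading singular terms $t^{-1}$ cancel exactly (this is the content of Lemma~\ref{lem:A-kills-singular} together with Jacobi inversion), and the true decay at $0$ is rapid. To capture such cancellation, I would first try to write $\Theta$ as a derivative expression in the theta function $\vartheta$, as in Lemma~\ref{lem:Theta-rapid-decay}, and apply \eqref{eq:jacobi} before estimating. When that is not available, I would fall back on the monomial-by-monomial expansion of $S_\alpha$: collect the coefficients of each power $t^{-m-1/2}$, and declare the integral formally convergent precisely when every surviving power is integrable against $t^{\sigma}$.
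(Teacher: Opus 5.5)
Your proposal follows the paper's proof: you reduce to single monomials, apply the Weierstrass $M$-test on compact intervals for (i), and for (ii) combine exponential decay at $\infty$ with the bound $\sum_{n\ge1} n^{\alpha}e^{-\pi n^2 t}\ll t^{-(\alpha+1)/2}$ on $(0,1]$, reading ``formally convergent'' as the resulting termwise exponent condition, exactly as the paper does. One small discrepancy: your unimodality comparison justifies only an $O(t^{-\alpha/2})$ error, not the $O(1)$ you state, but only the upper bound is needed, and that comparison is more careful than the paper's claim that $x\mapsto x^{\alpha}e^{-\pi t x^2}$ is monotone on $[1,\infty)$, which fails for small $t$.
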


\begin{proof}
It suffices to treat a single monomial $P(n,t)=n^{\alpha}t^{\beta/2}$, since the general
case follows by linearity and the triangle inequality. Thus consider
\[
\Theta_{\alpha,\beta}(t):=\sum_{n\ge1} n^{\alpha}t^{\beta/2}e^{-\pi n^2 t}.
\]

\smallskip
\noindent\textbf{Step 1: uniform bounds for Gaussian sums.}
Fix $\alpha\ge0$. For $t\ge1$ we have $e^{-\pi n^2 t}\le e^{-\pi n^2}$ and hence
\[
\sum_{n\ge1} n^{\alpha}e^{-\pi n^2 t}\le \sum_{n\ge1} n^{\alpha}e^{-\pi n^2}=:C_{\alpha}<\infty,
\]
uniformly in $t\ge1$.

For $0<t\le1$, monotonicity of $x\mapsto x^{\alpha}e^{-\pi t x^2}$ on $[1,\infty)$ and an
integral comparison give
\[
\sum_{n\ge1} n^{\alpha}e^{-\pi n^2 t}
\le
1+\int_{0}^{\infty} x^{\alpha}e^{-\pi t x^2}\,dx
=
1+\frac{1}{2}\,(\pi t)^{-(\alpha+1)/2}\Gamma\!\Big(\frac{\alpha+1}{2}\Big)
\ll_{\alpha} t^{-(\alpha+1)/2}.
\]
Combining the two regimes yields the uniform estimate
\begin{equation}\label{eq:gaussian-sum-bound}
\sum_{n\ge1} n^{\alpha}e^{-\pi n^2 t}\ll_{\alpha}
\begin{cases}
t^{-(\alpha+1)/2}, & 0<t\le 1,\\[2pt]
1, & t\ge 1.
\end{cases}
\end{equation}

\smallskip
\noindent\textbf{Step 2: termwise differentiation and local uniform convergence.}
Differentiate the summand:
\[
\partial_t^k\!\bigl(t^{\beta/2}e^{-\pi n^2 t}\bigr)
=
\sum_{m=0}^{k} c_{k,m,\beta}\, t^{\beta/2-m}\,(\pi n^2)^{k-m}e^{-\pi n^2 t},
\]
for explicit constants $c_{k,m,\beta}$. Hence
\[
\partial_t^k\Theta_{\alpha,\beta}(t)
=
\sum_{n\ge1}\sum_{m=0}^{k} c_{k,m,\beta}\, n^{\alpha+2(k-m)}\, t^{\beta/2-m}\,e^{-\pi n^2 t}.
\]
Fix a compact interval $t\in[t_0,t_1]\subset(0,\infty)$. Then $t^{\beta/2-m}$ is bounded on
$[t_0,t_1]$, and by \eqref{eq:gaussian-sum-bound} the series
\[
\sum_{n\ge1} n^{\alpha+2(k-m)}e^{-\pi n^2 t}
\]
converges absolutely and uniformly on $[t_0,t_1]$. Therefore the series for
$\partial_t^k\Theta_{\alpha,\beta}(t)$ converges absolutely and uniformly on $[t_0,t_1]$,
which proves (i) by the Weierstrass $M$-test.

\smallskip
\noindent\textbf{Step 3: Mellin integrability.}
Using \eqref{eq:gaussian-sum-bound}, we obtain for $0<t\le1$,
\[
|\Theta_{\alpha,\beta}(t)|
\le
t^{\beta/2}\sum_{n\ge1}n^{\alpha}e^{-\pi n^2 t}
\ll_{\alpha,\beta}
t^{\beta/2-(\alpha+1)/2},
\]
and for $t\ge1$,
\[
|\Theta_{\alpha,\beta}(t)|
\ll_{\alpha,\beta}
t^{\beta/2}e^{-\pi t}
\qquad
(\text{since }e^{-\pi n^2 t}\le e^{-\pi t} \text{ for }n\ge1).
\]
Hence $|\Theta_{\alpha,\beta}(t)|t^{\sigma}$ is integrable near $+\infty$ for every $\sigma\in\R$
by exponential decay, and it is integrable near $0$ precisely when the power
\[
\sigma+\frac{\beta}{2}-\frac{\alpha+1}{2}
\]
is $>-1$, which is exactly the formal convergence condition at $0$.
This proves (ii) for $\Theta_{\alpha,\beta}$, and hence for $\Theta$ by linearity.
\end{proof}

\begin{remark}
All termwise differentiations, integral interchanges, and Mellin transform computations involving
theta-series in Sections~\ref{sec:completion-selfdual} and~\cite{PaperI} are justified
by Lemma~\ref{lem:theta-domination}.
\end{remark}

\subsection{Boundary terms for the Archimedean completion operator}
\label{app:boundary}

In Section~\ref{sec:lb-identity} we used an integration-by-parts identity for the
Archimedean completion operator $\mathcal A$. We record here a convenient
sufficient condition ensuring vanishing of boundary terms.

\begin{lemma}[Vanishing of boundary contributions]
\label{lem:boundary-vanish}
Let $f:(0,\infty)\to\C$ be twice continuously differentiable. Assume there exist constants
$\alpha,\beta>0$ and constants $C_0,C_\infty>0$ such that, for $t\in(0,1]$ and $t\ge 1$,
\[
|f(t)|+t|f'(t)|+t^2|f''(t)| \le C_0\, t^{\beta},
\qquad
|f(t)|+t|f'(t)|+t^2|f''(t)| \le C_\infty\, t^{-\alpha}.
\]
Fix real numbers $\sigma_0<\sigma_1$ with
\[
-\beta < \sigma_0 \le \sigma_1 < \alpha.
\]
Then for every $s\in\C$ with $\Re(s)\in[\sigma_0,\sigma_1]$, all boundary terms arising from
one or two integrations by parts in Mellin-type integrals of the form
\[
\int_{\varepsilon}^{R} f(t)\,t^{s-1}\,dt
\]
vanish as $\varepsilon\downarrow0$ and $R\uparrow\infty$, uniformly in $s$ with
$\Re(s)\in[\sigma_0,\sigma_1]$.
\end{lemma}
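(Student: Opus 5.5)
The plan is to write out explicitly every boundary term that one or two integrations by parts can produce, and then bound each one pointwise using the two hypotheses on $f$. Write $s=\sigma+i\tau$ with $\sigma\in[\sigma_0,\sigma_1]$, and use $|t^{s}|=t^{\sigma}$.

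\emph{Step 1: list the boundary terms.} Integrating by parts once in $\int_\varepsilon^R f(t)\,t^{s-1}\,dt$ gives the boundary term $\bigl[f(t)\,t^{s}\bigr]_\varepsilon^R$, with coefficient $1/s$ if we integrate $t^{s-1}$. It gives instead terms built from $t^{s-1}$ and $f'$ if the derivative is moved the other way, as in \eqref{eq:mellin-A-identity}. A second integration by parts produces terms of the form $\bigl[t\,f'(t)\,t^{s}\bigr]_\varepsilon^R$, again with coefficients depending on $s$. In every arrangement each boundary term has the scale-invariant form
\[
c(s)\,\bigl(t^{k}f^{(k)}(t)\bigr)\,t^{s},\qquad k\in\{0,1\}\ (\text{and } k=2 \text{ if one differentiates once more}),
\]
evaluated at $t=\varepsilon$ or $t=R$. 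Here $c(s)$ is a polynomial or rational expression in $s$ that does not depend on $t$. I would choose the direction in which derivatives fall on $f$ and powers of $t$ are differentiated, exactly as in the derivation of \eqref{eq:mellin-A-identity}. This makes $c(s)$ polynomial and avoids divisions by $s$ or $s+1$. The quantities $t^{k}f^{(k)}$ are precisely what the hypothesis controls through $|f|+t|f'|+t^2|f''|$.

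\emph{Step 2: estimate near $0$ and near $\infty$.} For $t=\varepsilon\le 1$, the hypothesis gives
\[
|t^{k}f^{(k)}(t)\,t^{s}|\le C_0\,\varepsilon^{\beta+\sigma}\le C_0\,\varepsilon^{\beta+\sigma_0},
\]
since $\varepsilon\le1$ and $\sigma\ge\sigma_0$. This tends to $0$ because $\beta+\sigma_0>0$. For $t=R\ge1$, similarly
\[
|t^{k}f^{(k)}(t)\,t^{s}|\le C_\infty\,R^{\sigma_1-\alpha}\to0,
\]
since $\sigma_1<\alpha$. Both bounds depend only on $\sigma_0,\sigma_1$ and not on $s$ itself. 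This gives the required uniformity of the $t$-dependent factor over the closed vertical strip. The integrals left over after the integrations by parts converge absolutely by the same bounds, because $\sigma+\beta>0$ near $0$ and $\sigma-\alpha<0$ near $\infty$.

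\emph{Step 3: the main obstacle.} The delicate point is the coefficient $c(s)$ together with any shifts of exponent. A polynomial $c(s)$ grows in $|\Im s|$, so literal uniformity over an unbounded vertical strip holds only after dividing by $c(s)$, or on compact subsets in $\Im s$. Also, in the identity \eqref{eq:mellin-A-identity} the boundary terms carry the shifted power $t^{s-1/2}$ rather than $t^{s}$. I would handle the first issue by stating the conclusion as vanishing of $c(s)^{-1}\times$(boundary term) uniformly in the strip, equivalently uniformly on compact $s$-sets. For the second, I would apply Step~2 with $(\sigma_0,\sigma_1)$ replaced by the correspondingly shifted interval, and check that it still lies in $(-\beta,\alpha)$. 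For the theta kernels this is automatic, because by Lemma~\ref{lem:Theta-rapid-decay} $\alpha$ and $\beta$ may be taken arbitrarily large.
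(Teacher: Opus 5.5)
Your proposal is correct and is essentially the paper's proof. Write each boundary term as an $s$-dependent prefactor times $\bigl(t^{k}f^{(k)}(t)\bigr)t^{s}$, and bound the latter by $C_0\varepsilon^{\beta+\sigma_0}$ at $0$ and by $C_\infty R^{\sigma_1-\alpha}$ at $\infty$, uniformly for $\Re(s)\in[\sigma_0,\sigma_1]$.

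Your point that the prefactors must be split off for literal uniformity (e.g.\ the $1/s$ in \eqref{eq:ibp1}, unbounded near $s=0$) is a fair refinement the paper leaves implicit. Your closing aside, however, does not cover \eqref{eq:mellin-A-identity} as the paper uses it: there the function integrated by parts is $K_L-1\sim t^{-1/2}$ at $0$, not the rapidly decaying $\widetilde K_{\mathrm{arch}}$.
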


\begin{proof}
We record the two boundary expressions that occur after one and two integrations by parts.
For $s\in\C$ and $0<\varepsilon<R$, one has
\begin{equation}\label{eq:ibp1}
\int_{\varepsilon}^{R} f(t)\,t^{s-1}\,dt
=
\Bigl[\frac{f(t)t^{s}}{s}\Bigr]_{\varepsilon}^{R}
-\frac{1}{s}\int_{\varepsilon}^{R} f'(t)\,t^{s}\,dt,
\end{equation}
and integrating by parts once more gives boundary terms involving
$f(t)t^{s}$ and $f'(t)t^{s+1}$ (with rational prefactors in $s$).

We show that each boundary term tends to $0$ at both ends, uniformly for
$\Re(s)\in[\sigma_0,\sigma_1]$.

\smallskip
\noindent\textbf{As $t\downarrow0$.}
Using the hypotheses for $t\in(0,1]$,
\[
|f(t)t^{s}|\le C_0\, t^{\beta+\Re(s)}\le C_0\, t^{\beta+\sigma_0}\to0
\quad (t\downarrow0),
\]
since $\beta+\sigma_0>0$. Likewise,
\[
|f'(t)t^{s+1}| = |(t f'(t))\,t^{s}|
\le C_0\, t^{\beta+\Re(s)}\le C_0\, t^{\beta+\sigma_0}\to0.
\]
The same estimate applies to $f''(t)t^{s+2}$ via $t^2|f''(t)|\le C_0 t^\beta$.
Thus all boundary expressions arising from one or two integrations by parts vanish at $0$,
uniformly for $\Re(s)\in[\sigma_0,\sigma_1]$.

\smallskip
\noindent\textbf{As $t\uparrow\infty$.}
Using the hypotheses for $t\ge1$,
\[
|f(t)t^{s}|\le C_\infty\, t^{-\alpha+\Re(s)}\le C_\infty\, t^{-\alpha+\sigma_1}\to0
\quad (t\uparrow\infty),
\]
since $-\alpha+\sigma_1<0$. Similarly,
\[
|f'(t)t^{s+1}|=|(t f'(t))\,t^{s}|
\le C_\infty\, t^{-\alpha+\Re(s)}\le C_\infty\, t^{-\alpha+\sigma_1}\to0,
\]
and likewise for $f''(t)t^{s+2}$. Hence all boundary terms vanish at $\infty$ uniformly on the
strip $\Re(s)\in[\sigma_0,\sigma_1]$.

Combining the endpoint estimates proves the claim.
\end{proof}

\begin{remark}
In the applications, $f$ is a theta-series combination such as $\widetilde K_{\mathrm{arch}}$ or a
finite number of its $t$-derivatives. The required bounds on $f,t f',t^2 f''$ follow from the
explicit theta-series representation and Lemma~\ref{lem:theta-domination}, and the relevant
vertical strips for $s$ are exactly those where the Mellin integrals are used.
\end{remark}

\subsection{Analytic continuation by identity principles}
\label{app:identity}

Finally, we record the elementary identity principle used repeatedly to promote
local equalities to global ones.

\begin{lemma}[Identity principle for meromorphic functions]
\label{lem:identity}
Let $F$ and $G$ be meromorphic functions on a connected domain $\Omega\subset\C$.
Assume there exists a set $E\subset \Omega$ with an accumulation point in $\Omega$ such that
$F(z)=G(z)$ for all $z\in E$ at which both sides are defined (i.e.\ $z$ is not a pole of $F$ or $G$).
Then $F=G$ as meromorphic functions on $\Omega$.
\end{lemma}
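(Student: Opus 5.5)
The plan is to reduce the statement to the classical identity theorem for holomorphic functions by deleting the poles and working with the difference $H:=F-G$.

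First, let $P\subset\Omega$ be the union of the pole sets of $F$ and $G$. Since both functions are meromorphic on $\Omega$, $P$ is discrete and closed in $\Omega$. Hence $\Omega':=\Omega\setminus P$ is open, and it is still connected: removing a closed discrete set from a connected open subset of $\C$ does not disconnect it, because any path can be perturbed locally around isolated points. On $\Omega'$ the function $H$ is holomorphic, and by hypothesis $H=0$ on $E\cap\Omega'$.

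Second, I would show that $H$ vanishes identically on some nonempty open subset of $\Omega'$. Let $z_0\in\Omega$ be an accumulation point of $E$. Choose a small disc $D$ around $z_0$ with $D\cap P\subset\{z_0\}$, which is possible by discreteness. Then $H$ is meromorphic on $D$ with at most a pole at $z_0$, so for some $m\ge0$ we can write $H(z)=(z-z_0)^{-m}h(z)$ with $h$ holomorphic on $D$. The points of $E\cap(D\setminus\{z_0\})$ lie in $\Omega'$, and $h$ vanishes there. These points accumulate at $z_0\in D$, so the holomorphic identity theorem gives $h\equiv0$ on $D$. Consequently $H\equiv0$ on $D\setminus\{z_0\}$, which is a nonempty open subset of $\Omega'$.

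Third, the zero set of $H$ on the connected domain $\Omega'$ has interior points, so the holomorphic identity theorem gives $H\equiv0$ on all of $\Omega'$. Thus $F=G$ off the discrete set $P$. Near each $p\in P$ the two Laurent expansions then agree on a punctured disc, so $F$ and $G$ have the same principal parts and poles. Hence $F=G$ as meromorphic functions on $\Omega$.

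The only genuinely delicate point is the second step. The accumulation point $z_0$ may itself be a pole of $F$ or $G$, and this is why I factor out $(z-z_0)^{-m}$ rather than applying the holomorphic identity theorem at $z_0$ directly. The connectivity of $\Omega'$ in the first step is standard and I would only cite it.
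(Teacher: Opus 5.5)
Your proof is correct and follows the same route as the paper: you delete the pole set $P$, apply the holomorphic identity theorem to $H=F-G$ off $P$, and propagate to all of $\Omega$. You use connectedness of $\Omega\setminus P$ directly, whereas the paper picks a component and then appeals to connectedness of $\Omega$. Your version is more careful at the one delicate point. The paper simply asserts that $E$ accumulates inside some component of $\Omega\setminus P$, which is not justified when the accumulation point $z_0$ is itself a pole (e.g.\ $E=\{1/n\}$ with a pole at $0$). Your local factorization $H=(z-z_0)^{-m}h$ on a small disc closes exactly that case.
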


\begin{proof}
Let $P\subset\Omega$ be the (discrete) union of the pole sets of $F$ and $G$, and set
$\Omega':=\Omega\setminus P$. Then $\Omega'$ is open and each connected component of $\Omega'$
is a domain on which both $F$ and $G$ are holomorphic. The hypothesis implies that on at least
one component $U$ of $\Omega'$, the set $E\cap U$ has an accumulation point in $U$ and
$F=G$ on $E\cap U$. Hence by the holomorphic identity theorem, $F=G$ on $U$, i.e.\ $F-G\equiv 0$
there.

Now consider the meromorphic function $H:=F-G$ on $\Omega$. We have shown that $H$ vanishes
identically on the nonempty open set $U\subset\Omega$. Since $\Omega$ is connected, meromorphic
continuation forces $H\equiv 0$ on all of $\Omega$: indeed, if $H$ were not identically zero,
its zero set would be discrete away from poles, contradicting that it contains an open set.
Therefore $F=G$ as meromorphic functions on $\Omega$.
\end{proof}

\begin{remark}
Lemma~\ref{lem:identity} is used only to promote identities established on an overlap of domains
of absolute convergence to identities on the full connected domain of meromorphy, after verifying
that both sides extend meromorphically to that domain.
\end{remark}

\subsection{Strip-uniform Riemann--Lebesgue and integration-by-parts}\label{app:strip-RL}

We first verify the weighted hypotheses \cite{PaperI} for the explicit cycle-spectral kernel
\cite{PaperI}.

\begin{proof}
Write $f(y)=f_{\theta}(y)-c_{\mathrm{anc}}e^{-y}$ with $f_{\theta}(y):=2e^{-y/2}\sum_{n\ge1}e^{-\pi n^2 e^y}$.
The exponential term $c_{\mathrm{anc}}e^{-y}$ clearly satisfies the required weighted integrability for any
$\eta_0<1$.

For the theta tail, note that for $y\ge0$ and $n\ge1$ one has
$e^{-\pi n^2 e^y}\le e^{-\pi e^y}$ and hence
$0\le f_{\theta}(y)\le 2e^{-y/2}\sum_{n\ge1}e^{-\pi e^y}\ll e^{-y/2}e^{-\pi e^y}$.
Differentiating termwise (justified by dominated convergence using the same domination) gives
$|f_{\theta}'(y)|\ll e^{-y/2}(1+e^y)e^{-\pi e^y}$.
Therefore for $0<\eta_0<\tfrac12$ we have
$e^{\eta_0 y}(|f_{\theta}(y)|+|f_{\theta}'(y)|+y|f_{\theta}(y)|+y|f_{\theta}'(y)|)\ll
e^{-(1/2-\eta_0)y}(1+y)(1+e^y)e^{-\pi e^y}$,
which is integrable on $[1,\infty)$ thanks to the superexponential factor $e^{-\pi e^y}$.
On $[0,1]$ the series for $f_{\theta}$ and $f_{\theta}'$ converges uniformly, hence both are bounded,
and the weighted integrals are finite. Combining the bounds for $f_{\theta}$ and $c_{\mathrm{anc}}e^{-y}$
yields the claim.
\end{proof}

We record a convenient sufficient condition for strip-uniform decay of oscillatory integrals.

\begin{lemma}[Strip-uniform IBP/Riemann--Lebesgue]\label{lem:strip-RL}
Let $f:(0,\infty)\to\R$ be absolutely continuous. Assume that for some $\eta_0>0$,
\[
\int_0^\infty e^{\eta_0 y}\bigl(|f(y)|+|f'(y)|+y|f(y)|+y|f'(y)|\bigr)\,dy<\infty.
\]
Define
\[
I(w):=\int_0^\infty f(y)e^{-iwy}\,dy.
\]
Then $I$ is holomorphic on $\{|\Im w|<\eta_0\}$. Moreover, for every $0<\tilde\eta<\eta_0$, as $|\Re w|\to\infty$,
\[
\sup_{|\Im w|\le \tilde\eta}|I(w)|=o_{\tilde\eta}(1/|w|),\qquad
\sup_{|\Im w|\le \tilde\eta}|I'(w)|=o_{\tilde\eta}(1/|w|),
\]
and
\[
I(w)=\frac{f(0)}{iw}+o_{\tilde\eta}(1/|w|)
\qquad\text{uniformly for }|\Im w|\le \tilde\eta.
\]
\end{lemma}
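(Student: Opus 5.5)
This is the standard strip-uniform Riemann--Lebesgue argument: integrate by parts once, then apply a dominated Riemann--Lebesgue lemma that is uniform in a compact family of exponential weights.

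\textbf{Step 1 (holomorphy).} Write $w=u+iv$ with $|v|\le\tilde\eta<\eta_0$. Then $|f(y)e^{-iwy}|=|f(y)|e^{vy}\le e^{\eta_0 y}|f(y)|$, which is integrable by hypothesis. Likewise $|y f(y)e^{-iwy}|\le e^{\eta_0 y}y|f(y)|$, so differentiation under the integral sign is justified locally uniformly. Hence $I$ is holomorphic on $\{|\Im w|<\eta_0\}$, with
\[
I'(w)=-i\int_0^\infty y f(y)e^{-iwy}\,dy .
\]

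\textbf{Step 2 (integration by parts).} Integrability of $e^{\eta_0 y}(|f|+|f'|)$ gives $f(y)e^{\eta_0 y}\to 0$ as $y\to\infty$: the limit of $f(y)e^{\eta_0 y}$ exists because its derivative $e^{\eta_0 y}(f'+\eta_0 f)$ is integrable, and the limit must be $0$ by integrability of $e^{\eta_0 y}f$. So for $|\Im w|\le\tilde\eta$ and $w\ne0$ the boundary term at infinity vanishes, and
\[
I(w)=\frac{f(0)}{iw}+\frac{1}{iw}\int_0^\infty f'(y)e^{-iwy}\,dy .
\]
Similarly, $g(y):=yf(y)$ is absolutely continuous with $g'=f+yf'$, and $e^{\eta_0 y}(|g|+|g'|)$ is integrable. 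Its boundary term at $0$ vanishes since $g(0)=0$, so
\[
I'(w)=\frac{-1}{w}\int_0^\infty (f+yf')(y)e^{-iwy}\,dy .
\]

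\textbf{Step 3 (uniform Riemann--Lebesgue).} It remains to show that for $h\in\{f',\,f+yf'\}$,
\[
J_h(w)=\int_0^\infty h(y)e^{-iwy}\,dy\to0
\]
as $|u|\to\infty$, uniformly in $|v|\le\tilde\eta$. Fix $\varepsilon>0$ and truncate to $[0,Y]$, choosing $Y$ so that $\int_Y^\infty e^{\eta_0 y}|h|<\varepsilon$. On $[0,Y]$, approximate $h$ in $L^1$ by a step function $s$ with error $<\varepsilon e^{-\eta_0 Y}$; the resulting error is at most $\varepsilon$ uniformly in $v$. For a single step $\mathbf 1_{[a,b]}$,
\[
\int_a^b e^{-iwy}\,dy=\frac{e^{-iwa}-e^{-iwb}}{iw},
\]
whose modulus is at most $2e^{\tilde\eta Y}/|u|$ uniformly in $|v|\le\tilde\eta$. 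Hence $\limsup_{|u|\to\infty}\sup_{|v|\le\tilde\eta}|J_h|\le2\varepsilon$, so $J_h=o(1)$ uniformly. Combined with Step 2, this gives
\[
I(w)=\frac{f(0)}{iw}+o(1/|w|),\qquad I'(w)=o(1/|w|)
\]
uniformly on the closed substrip.

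\textbf{Main obstacle.} The stated bound $\sup|I|=o(1/|w|)$ would follow only if $f(0)=0$; in general the sharp statement is $I(w)=f(0)/(iw)+o(1/|w|)$, so that $I=O(1/|w|)$. I would prove the asymptotic expansion and note that the first displayed bound should read $O(1/|w|)$, or $o(1/|w|)$ under the extra assumption $f(0)=0$. The other delicate point is keeping Step 3 uniform in $v$, which is exactly what the truncation at a fixed $Y$ handles.
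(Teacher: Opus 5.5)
Your proof is correct and follows the same route as the paper: domination for holomorphy, one integration by parts for $I$ and another for $yf$, then a strip-uniform Riemann--Lebesgue bound applied to $f'$ and $f+yf'$. Your objection is also right: the paper's step ``in particular $\sup_{|\Im w|\le\tilde\eta}|I(w)|=o(1/|w|)$'' is false unless $f(0)=0$ (e.g.\ $f(y)=e^{-y}$ gives $I(w)=1/(1+iw)$), so only $O(1/|w|)$ holds together with the expansion $I(w)=f(0)/(iw)+o(1/|w|)$. Your truncation/step-function argument for uniformity in $\Im w$ and your limit argument for $e^{\eta_0 y}f(y)\to0$ also supply details the paper only asserts; the paper invokes the single-function Riemann--Lebesgue lemma with the weight misstated as $e^{(\tilde\eta+v)y}$ instead of $e^{vy}$, and claims an unneeded, unjustified decay of $e^{\tilde\eta y}f'(y)$.
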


\begin{proof}
Fix $0<\tilde\eta<\eta_0$. For $|\Im w|\le \tilde\eta$ one has
$|e^{-iwy}|=e^{(\Im w)y}\le e^{\tilde\eta y}$, so the hypothesis implies
\[
\int_0^\infty |f(y)|\,|e^{-iwy}|\,dy \le \int_0^\infty e^{\tilde\eta y}|f(y)|\,dy <\infty,
\]
uniformly for $|\Im w|\le\tilde\eta$. Hence $I(w)$ is well-defined and bounded on the closed strip
$\{|\Im w|\le\tilde\eta\}$. By dominated convergence (applied to difference quotients), $I$ is holomorphic
on $\{|\Im w|<\eta_0\}$.

\smallskip
\noindent\textbf{Strip-uniform Riemann--Lebesgue.}
Let $F_{\tilde\eta}(y):=e^{\tilde\eta y}f(y)\in L^1(0,\infty)$. Then for $|\Im w|\le\tilde\eta$,
\[
I(w)=\int_0^\infty F_{\tilde\eta}(y)\,e^{-(\tilde\eta+i w)y}\,dy.
\]
Writing $w=u+iv$, this is the Fourier transform (in $u$) of the $L^1$-function
$y\mapsto e^{(\tilde\eta+v)y}f(y)$ evaluated at frequency $u$. Since
$\sup_{|v|\le\tilde\eta}\int_0^\infty e^{(\tilde\eta+v)y}|f(y)|\,dy<\infty$,
the usual $L^1$ Riemann--Lebesgue lemma yields
\begin{equation}\label{eq:RL-unif}
\sup_{|\Im w|\le\tilde\eta}|I(w)|\longrightarrow 0
\qquad (|\Re w|\to\infty).
\end{equation}

\smallskip
\noindent\textbf{One integration by parts.}
Because $e^{\eta_0 y}(|f(y)|+|f'(y)|)\in L^1(0,\infty)$, we have $e^{\tilde\eta y}f(y)\to0$ and
$e^{\tilde\eta y}f'(y)\to0$ as $y\to\infty$, and also $f$ has a finite limit $f(0)$ as $y\downarrow0$
(by absolute continuity). Thus the boundary terms below vanish uniformly for $|\Im w|\le\tilde\eta$.
Integrating by parts gives, for $w\neq 0$,
\begin{equation}\label{eq:IBP1}
I(w)
=
\Bigl[\frac{f(y)e^{-iwy}}{-iw}\Bigr]_{0}^{\infty}
+\frac{1}{iw}\int_0^\infty f'(y)e^{-iwy}\,dy
=
\frac{f(0)}{iw}+\frac{1}{iw}\int_0^\infty f'(y)e^{-iwy}\,dy.
\end{equation}
By \eqref{eq:RL-unif} applied to $f'$ (using $e^{\eta_0 y}|f'(y)|\in L^1$), we have
\[
\sup_{|\Im w|\le\tilde\eta}\Bigl|\int_0^\infty f'(y)e^{-iwy}\,dy\Bigr|\to 0
\qquad (|\Re w|\to\infty),
\]
and hence from \eqref{eq:IBP1},
\[
\sup_{|\Im w|\le\tilde\eta}\Bigl|I(w)-\frac{f(0)}{iw}\Bigr|
\le
\frac{1}{|w|}\sup_{|\Im w|\le\tilde\eta}\Bigl|\int_0^\infty f'(y)e^{-iwy}\,dy\Bigr|
=
o(1/|w|).
\]
In particular $\sup_{|\Im w|\le\tilde\eta}|I(w)|=o(1/|w|)$.

\smallskip
\noindent\textbf{Derivative bound.}
Since $e^{\eta_0 y}y|f(y)|\in L^1(0,\infty)$, differentiation under the integral sign is justified and
\[
I'(w)=-i\int_0^\infty y f(y)e^{-iwy}\,dy.
\]
Set $g(y):=y f(y)$. Then $g$ is absolutely continuous, $g(0)=0$, and
\[
g'(y)=f(y)+y f'(y),
\]
so the hypothesis implies $e^{\eta_0 y}(|g(y)|+|g'(y)|)\in L^1(0,\infty)$. Integrating by parts as above,
for $w\neq 0$,
\[
\int_0^\infty g(y)e^{-iwy}\,dy
=
\Bigl[\frac{g(y)e^{-iwy}}{-iw}\Bigr]_{0}^{\infty}
+\frac{1}{iw}\int_0^\infty g'(y)e^{-iwy}\,dy
=
\frac{1}{iw}\int_0^\infty g'(y)e^{-iwy}\,dy.
\]
By the strip-uniform Riemann--Lebesgue lemma applied to $g'$, we have
\[
\sup_{|\Im w|\le\tilde\eta}\Bigl|\int_0^\infty g'(y)e^{-iwy}\,dy\Bigr|\to 0
\qquad (|\Re w|\to\infty),
\]
and therefore
\[
\sup_{|\Im w|\le\tilde\eta}\Bigl|\int_0^\infty y f(y)e^{-iwy}\,dy\Bigr|
=
o(1/|w|),
\]
whence
\[
\sup_{|\Im w|\le\tilde\eta}|I'(w)|=o(1/|w|).
\]

\end{proof}

\subsection{Quantitative sector control}\label{app:sector-proof}

This appendix records the analytic content required to upgrade
Hypothesis~\ref{hyp:sector-control} to a proved lemma.
The main text isolates this point because it is the unique nontrivial
analytic ``forcing'' step needed for the rectangle/argument-principle mechanism.

\begin{itemize}
\item \textbf{A correct mechanism must control boundary arguments on long rectangles.}
For rectangles $R_T=\{|\,\Re w|\le T,\ |\Im w|\le\eta_0\}$ that are long in the $\Re w$ direction,
standard harmonic-measure weights for interior points favor the horizontal sides, not the vertical
sides.  Thus any proof must use additional structure (beyond ``bounded type'' and ``vertical anchoring'')
to control $\arg\widetilde R$ on the horizontal edges.

\item \textbf{Global holomorphic logarithms cannot be assumed on excised domains.}
If one removes small disks around zeros in order to define $\arg \widetilde R$, the resulting domain
is typically multiply connected.  On such a domain, the existence of a single-valued holomorphic
logarithm of $\widetilde R$ is obstructed by the nontrivial periods of $\widetilde R'/\widetilde R$
around the holes.  Any use of $\Im\log\widetilde R$ therefore requires an explicit device
(e.g.\ crosscuts to make the domain simply connected, or an argument not relying on a global log).
\end{itemize}

\smallskip
\noindent
A complete proof of Hypothesis~\ref{hyp:sector-control} would therefore need to supply:
(i) a quantitative estimate that forces $\widetilde R(\partial R_T)$ into a sector from the available
strip-uniform bounds, and (ii) a branch-management argument that is valid in the presence of interior
zeros and possible boundary zeros (handled by indentation as in Appendix~\ref{app:rectangle-proof}).

\subsection{Rectangle argument with possible boundary zeros}\label{app:rectangle-proof}

\begin{lemma}[Rectangle exclusion under sector control]\label{lem:rectangle-exclusion}
Let $\widetilde R$ be holomorphic on a neighborhood of $\overline{R_T}$ and assume that
$\widetilde R$ is nonvanishing on $\partial R_T$. If $\widetilde R(\partial R_T)$ is contained in some
open sector of angle $<\pi$ (in particular, if Hypothesis~\ref{hyp:sector-control} holds on $R_T$), then
$\widetilde R$ has no zeros in $R_T$.
\end{lemma}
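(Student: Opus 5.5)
The plan is to reduce the statement to the argument principle on the rectangle, using the sector condition to show that the winding number of $\widetilde R(\partial R_T)$ about $0$ vanishes.

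\emph{Normalization.} Since $\widetilde R(\partial R_T)$ lies in an open sector of opening $<\pi$ with vertex at $0$, that sector lies in an open half-plane whose boundary line passes through $0$. So there is a unimodular constant $e^{i\alpha}$ with
\[
\Re\bigl(e^{i\alpha}\widetilde R(w)\bigr)>0\qquad (w\in\partial R_T).
\]
Under Hypothesis~\ref{hyp:sector-control} one may take $\alpha=0$: the closed sector $\{|\phi|\le\theta\}$ with $\theta<\pi/2$ sits inside the open sector $\{|\phi|<\theta'\}$ for any $\theta<\theta'<\pi/2$, which lies in the right half-plane. Replacing $\widetilde R$ by $e^{i\alpha}\widetilde R$ changes neither its zeros nor its holomorphy, so we assume $\alpha=0$.

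\emph{Winding number.} On the right half-plane the principal logarithm $\mathrm{Log}$ is holomorphic. Hence $\phi(w):=\Im\,\mathrm{Log}\,\widetilde R(w)$ is a continuous, single-valued function on the closed curve $\partial R_T$, with values in $(-\pi/2,\pi/2)$. It is a continuous branch of $\arg\widetilde R$ along the boundary. Traversing $\partial R_T$ once positively, the net change of $\phi$ is therefore $0$, so the winding number of the loop $\widetilde R(\partial R_T)$ about $0$ is $0$. Equivalently, $\widetilde R'/\widetilde R=(\mathrm{Log}\,\widetilde R)'$ on a neighborhood of $\partial R_T$, and the contour integral of an exact derivative around a closed curve vanishes. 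The rectangle boundary is piecewise smooth, which suffices for this.

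\emph{Argument principle.} Because $\widetilde R$ is holomorphic on a neighborhood of $\overline{R_T}$ and nonvanishing on $\partial R_T$, the argument principle gives
\[
\#\{\text{zeros of }\widetilde R\text{ in }\operatorname{int}R_T\}=\frac{1}{2\pi i}\oint_{\partial R_T}\frac{\widetilde R'(w)}{\widetilde R(w)}\,dw=0,
\]
with multiplicity counted. Zeros on $\partial R_T$ are excluded by hypothesis, so $\widetilde R$ has no zeros in the closed rectangle $R_T$. We also need $\widetilde R\not\equiv0$, but this is immediate from nonvanishing on the boundary.

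\emph{Main obstacle.} The only delicate point is the nonvanishing on $\partial R_T$, which the argument principle needs. Here it is assumed outright, and it is also implied by Hypothesis~\ref{hyp:sector-control} since there $r>0$. If one wanted to relax it, allowing boundary zeros, I would indent the contour by small semicircular arcs around each boundary zero. The arcs would be pushed outward, which is possible since $\widetilde R$ is holomorphic on a neighborhood of $\overline{R_T}$, so that the boundary zeros become interior points. One would then have to control the argument of $\widetilde R$ on those arcs, where the sector condition is no longer available. This is exactly where the argument would break down, so under the stated hypotheses no indentation is needed and the proof is the short argument above.
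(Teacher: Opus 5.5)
Your proof is correct and follows essentially the same route as the paper: the sector condition gives a single-valued continuous branch of $\arg\widetilde R$ on $\partial R_T$, so the winding number of $\widetilde R(\partial R_T)$ about $0$ is zero, and the argument principle then rules out zeros in $R_T$. Your rotation into the right half-plane followed by the principal logarithm is simply a concrete way of producing that branch. Your closing remark is also right: since nonvanishing on $\partial R_T$ is assumed, no indentation around boundary zeros is needed.
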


\begin{proof}
Since $\widetilde R$ is nonvanishing on $\partial R_T$ and $\widetilde R(\partial R_T)$ lies in a sector
of angle $<\pi$, we may choose a continuous branch of $\arg \widetilde R$ on $\partial R_T$ with total
variation strictly less than $\pi$. Hence the winding number of the loop $\widetilde R(\partial R_T)$
about the origin is zero. By the argument principle,
\[
\frac{1}{2\pi i}\int_{\partial R_T}\frac{\widetilde R'(w)}{\widetilde R(w)}\,dw
\]
equals the number of zeros of $\widetilde R$ in $R_T$, counted with multiplicity. The left-hand side is
the winding number and therefore equals $0$, so $\widetilde R$ has no zeros in $R_T$.
\end{proof}

We justify the argument-principle computation in Lemma~\ref{lem:rectangle-exclusion} when
$\widetilde R$ has zeros on $\partial R_T$. Fix $T\ge T_0$ and suppose $\widetilde R$ is holomorphic on a
neighborhood of $\overline{R_T}$. Since zeros of a nontrivial holomorphic function are isolated, the set
$Z_T:=\{w\in\partial R_T:\ \widetilde R(w)=0\}$ is finite.

For each $w_0\in Z_T$, choose $\rho_{w_0}>0$ so small that the disks $D(w_0,\rho_{w_0})$ are pairwise
disjoint and meet $\partial R_T$ in a single connected arc. For $0<\rho<\min_{w_0}\rho_{w_0}$, define the
indented contour $\Gamma_{T,\rho}$ by replacing each arc $\partial R_T\cap D(w_0,\rho)$ with the
corresponding circular arc in $\partial D(w_0,\rho)$ lying inside $R_T$.

Then $\widetilde R$ is nonvanishing on $\Gamma_{T,\rho}$, so the argument principle applies:
\[
\frac{1}{2\pi i}\int_{\Gamma_{T,\rho}} \frac{\widetilde R'(w)}{\widetilde R(w)}\,dw
=
N_{T,\rho},
\]
where $N_{T,\rho}$ is the number of zeros of $\widetilde R$ in the interior of the indented region,
counted with multiplicity. As $\rho\downarrow0$, the indented region increases to the interior of $R_T$.

It remains to show that the contribution from each circular indentation tends to the correct boundary
limit. If $w_0$ is a zero of order $m$, then locally
$\widetilde R(w)=(w-w_0)^m h(w)$ with $h(w_0)\neq0$. Hence on the small circular arc,
\[
\frac{\widetilde R'(w)}{\widetilde R(w)}=\frac{m}{w-w_0}+O(1),
\]
and therefore the integral over that arc tends to $m\pi i$ if the arc is a semicircle (or the
corresponding fraction of $2\pi i m$ for other arc angles). In particular, the total change of argument
along $\Gamma_{T,\rho}$ differs from that along $\partial R_T$ by a sum of contributions that converge
as $\rho\downarrow0$, and the winding-number computation remains valid. Thus letting $\rho\downarrow0$,
we recover the same conclusion as in the nonvanishing-boundary case.

\end{document}